\pgfplotsset{width=10cm,compat=1.18}
\definecolor{webgreen}{rgb}{0,.5,0}
\definecolor{webbrown}{rgb}{.6,0,0}
\newcommand{\seqnum}[1]{\href{https://oeis.org/#1}{\rm \underline{#1}}}
\newcommand{\Z}{{\mathbb Z}}
\newcommand{\R}{{\mathbb R}}
\newcommand{\N}{{\mathbb N}}
\newcommand{\bff}{{\mathbf f}}
\newcommand{\bfx}{{\mathbf x}}
\newcommand{\bft}{{\mathbf t}}
\newcommand{\bfr}{{\mathbf r}}
\newcommand{\mc}{\mathcal}
\newcommand{\A}{\mc A}
\newcommand{\T}{\mc T}
\newcommand{\walnut}{{\ttfamily Walnut}}
\newcommand{\true}{{\ttfamily TRUE}}
\newcommand{\false}{{\ttfamily FALSE}}
\begin{document}

\theoremstyle{definition}

\newtheorem{theorem}{Theorem}[section]
\newtheorem{definition}[theorem]{Definition}
\newtheorem{cor}[theorem]{Corollary}
\newtheorem{prop}[theorem]{Proposition}
\newtheorem{lem}[theorem]{Lemma}
\newtheorem{example}[theorem]{Example}
\newtheorem{que}[theorem]{Question}
\newtheorem{conj}[theorem]{Conjecture}
\theoremstyle{remark}
\newtheorem{rem}[theorem]{Remark}

\title{Monochromatic arithmetic progressions in the {F}ibonacci, {T}hue--{M}orse, and {R}udin--{S}hapiro words}
 
\author[G.\,Joshi]{G.\,Joshi}
\author[D.\,Rust]{D.\,Rust}
\email{gandhar.joshi@open.ac.uk, dan.rust@open.ac.uk }

\address[G.\,Joshi]{School of Mathematics and Statistics, The Open University, Milton Keynes, MK7 6AA, UK}
\address[D.\,Rust]{School of Mathematics and Statistics, The Open University, Milton Keynes, MK7 6AA, UK}

\subjclass[2020]{52C23, 37B10, 11B85, 68Q45
}

\keywords{Fibonacci word, Sturmian word, Rudin--Shapiro word, Thue--Morse word, substitution, monochromatic arithmetic progression, Zeckendorf numeration, automata, automatic sequence, automatic theorem prover, Walnut.}

\begin{abstract}
We investigate the lengths and starting positions of the longest monochromatic arithmetic progressions for a fixed difference in the Fibonacci word. We provide a complete classification for their lengths in terms of a simple formula. Our strongest results are proved using methods from dynamical systems, especially the dynamics of circle rotations. We also employ computer-based methods in the form of the automatic theorem-proving software \walnut{}. This allows us to extend recent results concerning similar questions for the Thue--Morse word and the Rudin--Shapiro word. This also allows us to obtain some results for the Fibonacci word that do not seem to be amenable to dynamical methods.
\end{abstract}

\maketitle

\section{Introduction}
Consider an infinite sequence of coloured positions taken from a finite colour palette. A {\em monochromatic arithmetic progression} with constant difference is the finite or infinite repetition of a single colour in that sequence with a fixed finite difference between those positions. Equivalently, we may consider infinite words over a finite alphabet $\mathcal{A}$ and occurrences of a letter from $\mathcal{A}$ that appear in arithmetic progression within the word. Throughout this work, we write `MAP' to mean `monochromatic arithmetic progression'. Van der Waerden's celebrated theorem \cite{vdw1927} says that for any finite alphabet $\mathcal{A}$ and any natural number $k$, there is a natural number $N$ such that for any finite word over $\mathcal{A}$ of length $N$, there must exist a MAP of length $k$.

For a general infinite word, Van der Waerden's theorem only guarantees the existence of arbitrarily long MAPs, not their lengths or positions in the word. Likewise, it is not true that the existence of arbitrarily long MAPs implies the existence of infinite MAPs. These properties are therefore worth exploring for particular infinite words or families of words. In this work, we are not so concerned with the existence of infinite MAPs. Morgenbesser, Shallit, and Stoll \cite{mss2011} showed that the Thue--Morse word only admits finite MAPs as a consequence of a result of Gelfond \cite{gelfond68}. Works by Durand and Goyheneche \cite{dg2019}, and by Nagai, Akiyama and Lee \cite{nagai2022} (for tilings) address the question for general uniformly recurrent sequences (resp.\ primitive substitution tilings) from a dynamical perspective, giving conditions for the non-existence of infinite MAPs in terms of spectral properties of the subshift (resp.\ hull) associated with a sequence (resp.\ tiling).

Instead, we turn our attention to understanding how long MAPs can be for a fixed difference and the starting positions of their first occurrence. There has been recent progress in this direction for certain families of words. In particular, most of the literature has so far focused on words coming from substitutions of constant length, also known as $k$-automatic sequences. For an introduction to fixed points of substitutions on finite alphabets we refer the reader to the book by Queff\'{e}lec \cite{queffelec2010}, and for a primer on finite-state automata and automatic sequences, see the book by Allouche and Shallit \cite{AS2003}. Parshina \cite{parshina15} addressed the question of determining the lengths of longest MAPs for generalised binary Thue--Morse words. Aedo, Grimm, Nagai, and Staynova \cite{aedo2022}, and the same authors with Ma\~nibo \cite{aedo2023} performed similar studies for `Thue--Morse like' words and substitutions with certain symmetries, including the Rudin--Shapiro word. Unfortunately, they were still unable to resolve the question of classifying the lengths of longest MAPs completely for any of these words, only providing bounds for the most part and, in the best cases, exact values for very specific families of fixed differences.

The question of determining the position of the first occurrence of a MAP of maximal length for a fixed difference is less well studied. However some results exist in the literature; most notably Parshina \cite{parshina17} tackled this question for generalised binary Thue--Morse words,
and Sobolewski \cite{sobo23}, using a computational approach, was able to determine first occurrences for some fixed differences for the Rudin--Shapiro word.

Conspicuously missing from all of these studies is an attempt to understand these questions for words coming from substitutions that are not constant length\footnote{beyond the existence or non-existence of infinite MAPs, which is addressed by the results of Durand and Goyheneche \cite{dg2019}.}. This includes a lack of any results concerning perhaps the most well-known substitution, the Fibonacci substitution.
This is likely due to the fact that the methods brought to bear in tackling these problems rely heavily on the $k$-automatic structure of the words associated with substitutions of constant length, which explains why the strongest results in this setting are for when the fixed difference is close to a power of the length of the substitution (e.g., powers of $2$ for the Thue--Morse substitution).

Our primary focus in this article is the study of the Fibonacci word. We make use of two novel approaches that prove to be exceptionally powerful: the first is that we make use of the fact that the Fibonacci word can be described in terms of the itinerary of an orbit of an irrational rotation on a circle, allowing us to use results from Diophantine approximation and dynamical systems; the second is that we take advantage of the fact that, although the Fibonacci word is not $k$-automatic, it is automatic with respect to the \emph{Zeckendorf} numeration system, which allows us to use new computational technologies in the form of automatic theorem-proving software.

{\walnut} is a software package written by Hamoon Mousavi \cite{mous_walnut} designed to automatically prove propositions concerning automatic sequences that can be stated within the first-order theories of extended Presburger arithmetic or B\"uchi arithmetic. This includes sequences that are automatic with respect to non-uniform numeration systems such as Zeckendorf numeration.

 Together, these two methods allow us to completely resolve the question of which lengths of MAPs exist for a fixed difference, culminating in two simple formulae for the maximal length (Theorems \ref{THM:adformula} and \ref{THM:adformula2}). We also make significant progress in understanding the positions of first occurrences of  MAPs of maximal length. In some instances, we provide proofs of statements using both methods, but it is worth noting that both methods have limitations and so our work benefits from a blending of the two.

The paper is organised as follows. Section \ref{SEC:prelims} provides the necessary preliminaries for substitutions, monochromatic arithmetic progressions, finite state automata, and \walnut{}. Section \ref{SEC:results-walnut} is devoted to results obtained from the use of \walnut{}. These include extensions of previous results concerning MAPs in the Thue--Morse and Rudin--Shapiro words, as well as our first results that address the Fibonacci word.
Section \ref{SEC:results-dynamics} then details our strongest results on both longest MAPs and positions of their first occurrence for the Fibonacci word. In this final section, we reinterpret the Fibonacci word as a rotation sequence on the interval $[0,1]$ and MAPs as segments of orbits of a power of the rotation. This in turn allows us to take advantage of results from the theory of dynamical systems and Diophantine approximation.

\section{Preliminaries}\label{SEC:prelims}
\subsection{Substitutions}\label{SEC:subs}
Let $\N$, $\N_0$ and $\Z$ denote the set of natural numbers, the set of non-negative integers and the set of integers, respectively. Given a finite alphabet $\mathcal A$, let $\A^n$ denote the set of words of length $n$ over $\A$ with the empty word $\varepsilon \in \A^0$, and let $\mathcal{A}^{\N_0}$ denote the set of $\N_0$-indexed infinite sequences over $\mathcal{A}$. Let $\mathcal A^{*} = \bigsqcup_{n=0}^\infty \A^n$ denote the free monoid of finite words in $\mathcal{A}$ under concatenation, and let $\mathcal{A}^+ =\bigsqcup_{n=1}^\infty \A^n$ denote the set of non-empty words in $\mathcal A$. A function $\theta\colon\mathcal{A}\to \mathcal{A}^+$ is called a {\em substitution} and naturally extends to a function $\theta\colon\A^+\to\A^+$ by concatenation of words. In this way, a substitution can be iterated by defining $\theta^1=\theta$ and $\theta^{n+1} = \theta \circ \theta^{n}$. A substitution is called {\em primitive} if there exists a power $p\geq 1$ such that for all letters $\mathtt{a,b} \in \A$, the word $\theta^p(\mathtt{b})$ contains the letter $\mathtt{a}$. Note that letters in an alphabet are written in {\ttfamily Teletype} font, and may include digits such as $\mathtt{0}$ and $\mathtt{1}$.
\begin{example}\label{fibsub}
Let $\phi$ denote the {\em Fibonacci substitution} on the alphabet $\A=\{\mathtt{0,1}\}$, defined by $\phi\colon\mathtt{0}\mapsto \mathtt{01},\, \mathtt{1}\mapsto \mathtt{0}$. This substitution is primitive, as both $\phi^2(\mathtt{0})$ and $\phi^2(\mathtt{1})$ contain both letters $\mathtt{0}$ and $\mathtt{1}$.
\end{example}

In this work, we focus on substitution fixed (resp.~substitution periodic) points. That is, sequences $\bfx$ such that $\theta(\bfx)=\bfx$ (resp.~$\theta^p(\bfx)=\bfx$ for some $p \geq 1$). For example, the {\em Fibonacci word} $\bff=\left( f_i\right)_{i\geq0}\in \{\mathtt{0,1}\}^{\N_0}$ is the unique fixed point of the Fibonacci substitution $\phi$ and is generated by taking the limit of $\phi^i(\mathtt{0})$ for increasing powers $i$, giving
\[\mathtt{0}\mapsto \mathtt{01} \mapsto \mathtt{010} \mapsto \mathtt{01001} \mapsto \cdots \mapsto \bff = \mathtt{010010100100101}\cdots.\]
Here, note that $f_i$ denotes the letter at position $i$ in the sequence $\bff$. For example, $f_4=\mathtt{1}$. 
\subsection{Monochromatic arithmetic progressions}\label{SEC:maps}
\begin{definition}
When a letter $\mathtt{a}$ in a sequence $\bfx$ repeats $\ell$ times with a difference $d$, we call it a {\em monochromatic arithmetic progression} of difference $d$ and length $\ell$. That is, for some $i \geq 0$,
\[
x_{i} = x_{i+d} = x_{i+2d} = \cdots = x_{i+(\ell-1)d}.
\]
\end{definition}
We write `MAP' to mean `monochromatic arithmetic progression'.
\begin{definition}
Let $\bfx\in \A^{\N_0}$ and $d\in\N$. We let
\[
A_\bfx(d) = \max\{\ell \mid \text{there is a MAP of difference $d$ and length $\ell$ in $\bfx$}\}
\]
denote the {\em longest length of a MAP} of difference $d$ that can be found in $\bfx$, where by convention $A_\bfx(d) = \infty$ if no finite maximum exists.
\end{definition}
\begin{rem}
    Parshina \cite{parshina15} first used the notation $A_\bfx(c,d)$ to mean the longest length of the MAP of difference $d$, where $c$ denotes the starting position of the MAP. We adopt this notation, but suppress $c$ as we consider here MAPs beginning at any position in $\bfx$.
\end{rem}
\begin{definition}
We let $i_\bfx(d)$ denote the {\em first position} at which a MAP of maximal length and of difference $d$ starts in the sequence $\bfx$. That is, \[i_\bfx(d) = \min\{j \mid x_j=x_{j+d}=\cdots = x_{j+(A(d)-1)d}\}.\]
\end{definition}
We avoid subscripts and write $A(d)$ and $i(d)$ when the context is clear.
\begin{example}\label{ex1}
In the Fibonacci word $\bff$, we find a MAP of difference $d=3$ and length $\ell=5$ given by
\[
f_{20}=f_{23}=f_{26}=f_{29}=f_{32}=\mathtt{0}.
\]
It can be shown that there is no MAP of difference $d=3$ longer than $5$ and so $A(3) = 5$ (see Section \ref{fixdalgo}).
As $f_{20}$ is the earliest starting position of a length-$5$ MAP of difference $3$ (all earlier MAPs are shorter), we have $i(3) = 20$.
\end{example}
Recall that the set of all infinite words is denoted by $\mc{A}^{\mathbb{N}_0}$. A word $\bfx$ is called {\em periodic} if there exists $x \in \mc{A}^*$ such that $\bfx=x^\omega$. A word $\bfx$ is called {\em eventually periodic} if there exist $x,y \in \mc{A}^*$ such that $\bfx=yx^\omega$. A word $\bfx$ is called {\em non-periodic} otherwise. A first simple observation provides a uniform lower bound on $A_\bfx(d)$ when $\bfx$ is a non-periodic sequence over a binary alphabet.
\begin{lem}\label{adgeq2}
Let \(\bfx\) be a non-periodic sequence over the alphabet $\{\mathtt{0,1}\}$. For every $d$, $A_{\bfx}(d)\geq2$.
\end{lem}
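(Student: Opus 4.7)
The plan is to proceed by contraposition: I will show that if $A_{\bfx}(d) = 1$ for some $d \in \N$, then $\bfx$ must be (eventually) periodic, which contradicts the hypothesis.

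Assume that there is no MAP of difference $d$ and length $2$ in $\bfx$. Unpacking the definition, this says that for every index $i \geq 0$ we have $x_i \neq x_{i+d}$. The crucial input is that the alphabet is binary, so a strict inequality between two letters is equivalent to exchanging them. Identifying $\mathtt{0}$ and $\mathtt{1}$ with the elements of $\Z/2\Z$, the assumption becomes $x_{i+d} = 1 - x_i$ for all $i \geq 0$.

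Applying this relation twice then yields $x_{i+2d} = 1 - x_{i+d} = 1 - (1 - x_i) = x_i$ for every $i \geq 0$. Equivalently, setting $w = x_0 x_1 \cdots x_{2d-1}$, we get $\bfx = w^{\omega}$, so $\bfx$ is periodic in the sense of the preceding paragraph, contradicting the standing assumption that $\bfx$ is non-periodic. I therefore conclude that such an $i$ exists, i.e.\ $A_{\bfx}(d) \geq 2$.

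No real obstacle is expected here, since the only ingredient beyond the definitions is the observation that on a two-letter alphabet the negation of equality is the swap map, whose square is the identity. The one pedantic point worth stating clearly in the write-up is that we produce periodicity starting at position $0$ (not merely eventual periodicity), so the conclusion directly contradicts the definition of non-periodicity recalled just above the lemma.
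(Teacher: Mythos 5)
Your proof is correct and follows essentially the same route as the paper: assume no letter repeats at distance $d$, use the binary alphabet to deduce $x_{i+d}=\overline{x_i}$ and hence $x_{i+2d}=x_i$ for all $i$, and conclude that $\bfx$ is periodic with period $2d$, contradicting non-periodicity. The paper merely phrases the same observation by writing out the explicit block structure $\bfx=(x_0\cdots x_{d-1}\overline{x_0\cdots x_{d-1}})^\omega$.
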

\begin{proof}
Since the sequence is non-empty, it is clear that $A_\bfx(d)$ is never $0$. Suppose that for some $d \geq 1$, we have $A_\bfx(d)=1$. Then for all \(k \geq 0\), we have $x_k \neq x_{k+d}$. Let $\bfx_{[0,d-1]} = x_0x_1 \cdots x_{d-1}$ be the length-$d$ prefix of $\bfx$. Let $\overline{x_i}$ denote the complement of $x_i$; i.e., $x_i=\mathtt{0} \iff \overline{x}_i = \mathtt{1}$. Given the observation that no symbol repeats with difference \(d\), we conclude that \(\bfx\) must have the following structure
\[\bfx = x_0x_1\cdots x_{d-1}\overline{x_0x_1\cdots x_{d-1}} x_0x_1\cdots x_{d-1}\overline{x_0x_1\cdots x_{d-1}}\cdots.\]
That is, \(\bfx = (x_0x_1\cdots x_{d-1}\overline{x_0x_1\cdots x_{d-1}})^\omega\). So, $\bfx$ is periodic with period \(2d\). This contradicts the non-periodicity of \(\bfx\). So, \(A_\bfx(d)=1\) must be false. That means, $A_\bfx(d) \geq 2$ for all $d$.
\end{proof}

\subsection{Automata Theory}\label{SEC:aut}
A {\em deterministic finite automaton} (DFA) is a finite-state machine that accepts or rejects an input string of symbols. A standard reference for the study of automatic sequences (including the following technical definitions) is the book of Allouche and Shallit \cite{AS2003}.
\begin{definition} \label{dfadef}
A {\em DFA} $M$ is defined as a 5-tuple $M = (\mathcal{Q},\Sigma, \delta, q_0, F)$, where 
\begin{itemize}
    \item $\mathcal{Q}$ is a finite set of states,
    \item $\Sigma$ is the finite input alphabet,
    \item $\delta\colon\mathcal{Q}\times\Sigma \mapsto \mathcal{Q}$ is the transition function,
    \item $q_0\in \mathcal{Q}$ is the initial state, and
    \item $F\subseteq \mathcal{Q}$ is the set of accepting states.
\end{itemize}
\end{definition}
We can extend $\delta$ iteratively to a function $\delta \colon \mathcal{Q} \times \Sigma^* \to \mathcal{Q}$ by defining, for a letter $a$ and finite word $u$, $\delta(q,au)\coloneq \delta(\delta(q,a),u)$, where $\delta(q,\varepsilon) = q$. Note that this definition implicitly assumes that we read words from left to right, which will correspond to reading representations of natural numbers from the {\em most significant digit} (msd-first). See Figure \ref{FIG:tmad4} for an example of a DFA where the input is read in binary with msd-first. We will almost always use msd-first throughout this text, although one can always define a DFA that reads inputs from right to left, which we refer to as reading from the {\em least significant digit} (lsd-first).

\subsubsection{\texorpdfstring{$k$}{k}-automatic sequences}
\begin{definition}{\label{dfaodef}}
Let $k\geq2$. A {\em $k$-DFAO} is a DFA with an output and whose input alphabet comprises $k$ symbols. That is, a $k$-DFAO is a $6$-tuple  $M = (\mathcal{Q}, \Sigma=\{0,1,\dots,k-1\}, \delta, q_0, \A, \vartheta)$ with $\mathcal{Q},\Sigma, \delta, q_0$ as given in Definition \ref{dfadef}, $\A$ is the output alphabet, and $\vartheta$ is a map $\vartheta\colon \mathcal{Q} \to \A$ called a {\em coding}. 

A sequence $\bfx$ is {\em $k$-automatic} if it is the output of a $k$-DFAO. That is, the $n$-th term in $\bfx$ is given by $x_n= \vartheta(\delta(q_0,(n)_k))$, where $(n)_k$ is the $k$-ary representation of the natural number $n$ seen as a word over $\{0,1,\ldots, k-1\}$.
\end{definition}
\begin{example}
The Thue--Morse word $\bft$ is the output of the $2$-DFAO shown in Figure \ref{FIG:thuemorsedfao} and is therefore $2$-automatic.
\begin{figure}
\centering
\begin{tikzpicture}
\tikzset{->,>=stealth',shorten >=1pt,node distance=2.5cm,every state/.style={thick, fill=white},initial text=$ $}
\node[state, initial] (q0) {$\mathtt{0}$};
\node[state, right of=q0] (q1) {$\mathtt{1}$};

\draw 
(q0) edge[loop above, looseness=12] node{0} (q0)
(q0) edge[bend right, below] node{1} (q1)
(q1) edge[bend right, above] node{1} (q0)
(q1) edge[loop above, looseness=12] node{0} (q1)
;
\end{tikzpicture}

\caption{A $2$-DFAO that generates the Thue--Morse word $\mathtt{0110100110010110\cdots}$.}
\label{FIG:thuemorsedfao}
\end{figure}
Comparing to Definition \ref{dfaodef}, the $2$-DFAO that generates $\bft$ is defined with $\mathcal{Q}=\mc{A}=\{\mathtt{0,1}\}$, $\Sigma=\{0,1\}$, $q_0=\mathtt{0}$, and the identity map $\vartheta$. For example, to find $t_6$, the $6$-th term in $\bft$, one reads $(6)_2=110$ with msd-first and records the output state as follows: \[\rightarrow\mathtt{0}\overset{1}{\longrightarrow}\mathtt{1}\overset{1}{\longrightarrow}\mathtt{0}\overset{0}{\longrightarrow}\mathtt{0}.\] So, $t_6=\mathtt{0}$.

Note that the Thue--Morse word is also the fixed point of the substitution $\mathtt{0}\mapsto \mathtt{01},\, \mathtt{1} \mapsto \mathtt{10}$.
\end{example} 

\subsection{\walnut}
We use the automatic theorem-proving software \walnut{}, originally written by Hamoon Mousavi \cite{mous_walnut}.
\walnut{} is based on an algorithm that can decidably prove or disprove first-order logical statements involving automatic sequences, made possible by B\"uchi's decidability theorem (\cite[Theorem 6.3.2]{shallit2022}).

For a detailed account of the development of \walnut{} and an introduction to its use, we recommend the book by Shallit \cite{shallit2022}. It is also worth mentioning that Shallit currently maintains a directory of academic works that have made use of \walnut{} in a research setting \cite{shallitwalnutwebpage}.

In this work, we make reference in several places to outputs generated by \walnut{}. In these cases, we provide code whose output can be confirmed by running the same command in the \walnut{} software.
Thankfully, \walnut{} code is intuitive to read for a working mathematician, and so for the benefit of the reader that is not familiar with \walnut{} but would like to understand the code provided in this work without needing to run it in the software, we provide some of the basics of \walnut{} syntax that are useful for understanding our code:
\begin{itemize}
    \item {\ttfamily E} is the existential quantifier, and {\ttfamily A} is the universal quantifier.
    \item {\ttfamily ?msd\_k} tells \walnut{} to represent integers in base-$k$ to be read msd-first. For example, {\ttfamily ?msd\_2} for binary and {\ttfamily ?msd\_fib} for base Fibonacci. In the absence of that, it represents the integers in binary by default.
    \item {\ttfamily \textasciitilde} is logical NOT, {\ttfamily \&} is logical AND, {\ttfamily |} is logical OR, {\ttfamily =>} is logical implication.
    \item {\ttfamily def} defines an automaton accepting the values of the free variables, making the logical statement true.
    \item {\ttfamily eval} returns \true \ (resp.~\false) when the output DFA accepts (resp.~rejects) all input values. 
    \item {\ttfamily T}, {\ttfamily RS}, and {\ttfamily F} are predefined in \walnut{} to be the Thue--Morse, Rudin--Shapiro, and Fibonacci words respectively.
	\item Representations of families of integers can be defined in {\walnut} using {\em regular expressions} using the {\ttfamily reg} command. For example, the following command defines a regular expression that accepts only values of the form $2^n$ for some $n\in\N$:
{\ttfamily reg ispower2 ?msd\_2 "0*10*";}.

Here, {\ttfamily u*} is to be interpreted as an arbitrarily long (possibly empty) concatenated string of {\ttfamily u}'s. In terms of automata, {\ttfamily u*} corresponds to a cycle whose labelled edges read {\ttfamily u}.
\end{itemize}

\section{Automatically proved results for MAPs}\label{SEC:results-walnut}
Most of the results in this section are proved using the automatic theorem-proving software \walnut{}.

\subsection{Encoding MAPs in \walnut{}}\label{fixdalgo}
We define the following basic \walnut{} commands that will be used to find the values of $A(d)$ and $i(d)$ for any fixed value of $d$ in an automatic sequence $\bfx$.  Note that, since the multiplication of two variables is prohibited in extended Presburger arithmetic, we must manually fix the difference $d$ when evaluating terms such as {\ttfamily d*k} in the following list of commands. Hence, the commands must be rerun for each new value of $d$.
\begin{itemize}
\item {\ttfamily def map "Ak (k<l) => X[i]=X[i+d*k]":
\\\# Accepts all pairs $(l,i)$ for which there is a MAP of length $l$ of difference $d$,\\ beginning at position $i$ in $\bfx$.}
\item {\ttfamily def imap "Ei \$map(i,l)":\\\# Accepts all values of $l$ for which there is a MAP of length $l$ of difference $d$.}
\item {\ttfamily def lmap "\$imap(l) \& Am (m>l) => \textasciitilde\$imap(m)":\\\# Accepts $A(d)$, the length of the longest MAP of difference $d$.}
\item {\ttfamily def fmap "\$map(i,l) \& Aj (j<i) => \textasciitilde\$map(j,l)":\\\# Accepts $i(d)$, the starting position of the first MAP of length $A(d)$; note that\\ {\ttfamily l} in {\ttfamily fmap} must be manually replaced with the value of $A(d)$ found by {\ttfamily lmap}.}
\end{itemize}

\begin{rem}
    For a fixed difference $d$, it is known to be decidable whether an automatic sequence admits an infinite MAP \cite[Proposition 22]{dg2019}. One may use the following command in {\walnut} to check the same: {\ttfamily eval infmapcheck "Ei (An (n>0) => X[i]=X[i+n*d])";}.
\end{rem}
\subsection{The Thue--Morse word}\label{misc}
Morgenbesser et al.\ \cite{mss2011} and Aedo et al.\ \cite{aedo2022} investigated bounds on the lengths of MAPs for the Thue--Morse word $\bft$. In particular, Aedo et al.\ showed that for all $d \geq 1$ and $n \geq 1$, $A(2^nd) = A(d)$. In particular, $A(2^n)=2$ for all $n \geq 1$, and this can be checked in \walnut{} (we leave the details to the interested reader). They were also able to prove the following uniform bound for all other values of $A(d)$. 
\begin{lem}[{\cite[Lemma 13]{aedo2022}}]\label{LEM:tmadgeq3}
Let $d > 1$ be an odd integer. Then $A(d)\geq 3$.
\end{lem}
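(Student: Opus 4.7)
The natural plan in the setting of this paper is to give a {\walnut}-based proof. Since $\bft$ is $2$-automatic, the assertion ``for every odd $d>1$ there exists an index $i$ such that $t_i = t_{i+d} = t_{i+2d}$'' is a first-order statement in the B\"uchi-arithmetic theory enriched with the Thue--Morse DFAO, and hence decidable by B\"uchi's theorem. {\walnut} will evaluate the corresponding predicate in finite time and return a definitive \true/\false\ verdict.

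Concretely, I would encode the oddness of $d$ via constant multiplication (which is permitted in extended Presburger arithmetic, unlike multiplication of two variables) and run a single command along the following lines:
\begin{verbatim}
eval tmadgeq3 "Ad (((d>1) & (Ek d=2*k+1)) =>
    (Ei (T[i]=T[i+d] & T[i]=T[i+2*d])))";
\end{verbatim}
The subformula {\ttfamily Ek d=2*k+1} expresses that $d$ is odd, and the consequent asserts the existence of a MAP of length~$3$ and difference~$d$. A \true\ output then proves the lemma. The advantage over a hand-proof is that all case analysis on the binary digit pattern of $d$ is subsumed by the internal automaton construction and minimisation.

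As a conceptual backup (and a sanity check on why the statement should be true) one can use the Thue--Morse identity $t_{2n} = t_n$, which yields $t_{2d} = t_d$ for every $d$. Hence if $t_d = \mathtt{0}$ then $t_0 = t_d = t_{2d} = \mathtt{0}$ already provides a length-$3$ MAP at $i = 0$. The remaining subcase $t_d = \mathtt{1}$ --- equivalently, the binary representation of the odd integer $d$ has an odd number of $\mathtt{1}$s --- would require locating a different starting index $i$ for which the parities of the binary digit sums of $i$, $i+d$, and $i+2d$ all coincide despite the odd contribution of $d$. I expect this second subcase to be the main obstacle in any purely by-hand approach, since it forces a delicate analysis of carries in binary addition; it is precisely here that the {\walnut} route pays off by resolving both subcases in one uniform computation.
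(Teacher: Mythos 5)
Your \walnut{}-based approach is correct and is essentially the approach the paper itself takes: the paper quotes this lemma from Aedo et al.\ without reproving it, but immediately strengthens it to $A(d)\geq 4$ in Proposition \ref{tmadnot3} using precisely the command you propose, with one extra conjunct ({\ttfamily T[i]=T[i+3*d]}) appended and the oddness predicate factored out via a {\ttfamily def odd} command. Your backup hand-argument for the subcase $t_d=\mathtt{0}$ (giving the length-$3$ MAP $t_0=t_d=t_{2d}$ from $t_{2n}=t_n$) is also correct, and your diagnosis that the subcase $t_d=\mathtt{1}$ is the genuine obstacle to a by-hand proof is accurate.
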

The authors conjectured that this bound can be improved to $A(d)\geq 4$. That is, they claimed that there exists no odd $d$ for which $A(d)=3$. We resolve this conjecture and, as a consequence, find the optimal lower bound for $A(d)$.
\begin{prop}\label{tmadnot3}
Let $d\geq 1$.
If $d=2^n$ for $n \geq 0$, then $A(d)=2$.
If $d$ has an odd factor, then $A(d)\geq 4$. Further, there exists $d$ for which $A(d)=4$.
\end{prop}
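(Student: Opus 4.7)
The plan is to separate the three assertions of the proposition and combine the scaling identity $A(2^n d) = A(d)$ of Aedo et al.\ with targeted {\walnut} evaluations.

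For the first assertion, apply the scaling identity with base case $d=1$; it then suffices to show $A(1)=2$. Since $\bft$ is cube-free, no MAP of difference $1$ and length $3$ exists, giving $A(1) \leq 2$, and Lemma \ref{adgeq2} supplies the reverse inequality. Hence $A(2^n) = A(1) = 2$ for every $n \geq 0$.

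For the second assertion, write $d = 2^n m$ with $m$ odd; since $d$ is assumed to have an odd factor greater than $1$, we have $m \geq 3$, and the scaling identity reduces matters to proving $A(m) \geq 4$ for every odd $m > 1$. I would encode this uniformly as a single {\walnut} predicate along the lines of {\ttfamily eval tmmap4 "Ad (d>1 \& Ek d=2*k+1) => Ei (T[i]=T[i+d] \& T[i]=T[i+2*d] \& T[i]=T[i+3*d])";}. Only constant multiples of the variables appear ($2d$, $3d$, and $2k+1$), so the statement lies inside extended Presburger arithmetic and is decidable by {\walnut}; a \true\ verdict resolves the Aedo et al.\ conjecture in a single call.

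For the third assertion, one need only exhibit a single odd $d$ with $A(d) = 4$. I would run the {\ttfamily lmap} command from Section \ref{fixdalgo} on small odd values of $d$ until such a value is found, the {\walnut} output then serving as the certificate. The main obstacle throughout is computational rather than conceptual: the universally quantified predicate for the second assertion can produce a very large intermediate automaton, and the principal risk is that the decision procedure exhausts memory before returning a verdict. That said, predicates of similar shape have been tractable elsewhere in the paper, so termination is a plausible expectation.
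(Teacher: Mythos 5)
Your proposal is correct and follows essentially the same route as the paper: the scaling identity $A(2^nd)=A(d)$ reduces everything to odd $d$, a single universally quantified \walnut{} query (identical in substance to the paper's {\ttfamily tmad\_geq4}) establishes $A(d)\geq 4$ for odd $d>1$, and a finite search certifies sharpness (the paper names the witness $d=11$ via the automaton of Figure \ref{FIG:tmad4}). Your cube-freeness argument for $A(1)=2$ is a nice self-contained touch, but otherwise the two proofs coincide.
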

\begin{proof}
The statement for powers of $2$ follows from the fact that $A(1)=2$ and $A(2^nd)=A(d)$.
For the second statement, we first prove that if $d>1$ is odd, then $A(d)\geq 4$. Running the following command in \walnut{} returns {\ttfamily TRUE}:
\begin{itemize}
    \item {\ttfamily def odd "En (d=2*n+1)";\\ \# Accepts all odd values of $d$.}
    \item {\ttfamily eval tmad\_geq4 "Ad (\$odd(d) \& d>1) => Ei (T[i]=T[i+d] \& T[i]=T[i+2*d] \& \\T[i]=T[i+3*d])";\\\# This checks if for all odd $d>1$, the MAP has length at least $4$.}
    \end{itemize}
    As $A(2^nd)=A(d)$, this confirms the same result for all $d$ with an odd factor.
To see that the bound is sharp, we observe that $A(11)=4$, as the input $(11)_2$ reaches an accepting state in the DFA given in Figure \ref{FIG:tmad4}.
\end{proof}

As $A(2^nd)=A(d)$ for all $d \geq 1$, it is only necessary to understand $A(d)$ for odd values of $d$. It would be convenient if a similar result held for $i(d)$. For instance, we have that $i(2^n)=2^n$ for all $n\geq 0$. This can be checked in \walnut{} or proved by elementary methods (see Proposition \ref{Prop:i2deqid}).
%
In principal, one may also verify in \walnut{} that for all $n \geq 0$, $i(2^nd)=2^ni(d)$ for any given fixed difference $d \geq 1$. The reason that this statement can be encoded in \walnut{} is because $A(2^nd)$ is fixed over all $n \geq 0$. Unfortunately, \walnut{} cannot be used (to our knowledge) to show that this relationship holds simultaneously for all $d$. Thankfully, we are able to show this by hand using traditional methods.
\begin{prop}\label{Prop:i2deqid}
For all $d \geq 1$ and for all $n \geq 0$, $i(2^nd)=2^ni(d)$. In particular, if $d$ is even, then $i(d)$ is even.
\end{prop}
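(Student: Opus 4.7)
The plan is to induct on $n$, which reduces the statement to the single base case $i(2d)=2i(d)$ for every $d\geq 1$. The engine for the base case is the pair of self-similarity identities $t_{2k}=t_k$ and $t_{2k+1}=\overline{t_k}$, which follow directly from the Thue--Morse substitution $\mathtt{0}\mapsto \mathtt{01},\ \mathtt{1}\mapsto\mathtt{10}$.

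To prove $i(2d)=2i(d)$, I would first establish the following precise correspondence: for every $j\geq 0$, $d\geq 1$, and $\ell \geq 1$, position $j$ is the start of a MAP of difference $2d$ and length $\ell$ in $\bft$ if and only if position $\lfloor j/2\rfloor$ is the start of a MAP of difference $d$ and length $\ell$. This is a direct consequence of the identities: if $j=2k$, then $t_{j+m(2d)}=t_{2(k+md)}=t_{k+md}$ for every $m$, whereas if $j=2k+1$, then $t_{j+m(2d)}=t_{2(k+md)+1}=\overline{t_{k+md}}$, and a binary sequence is constant if and only if the sequence of its complements is constant.

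Applying the correspondence with $\ell = A(2d)$, which equals $A(d)$ by the result of Aedo et al.\ recalled above, the set of starting positions of length-$A(d)$ MAPs of difference $2d$ is exactly
\[
\{\,2k,\ 2k+1 : k \text{ starts a MAP of length } A(d) \text{ and difference } d\,\}.
\]
The smallest element of this set is $2i(d)$, and it is even, which gives both $i(2d)=2i(d)$ and the fact that $i(2d)$ is even. The induction step is then immediate: given $i(2^n d)=2^n i(d)$, apply the base case to $d'=2^n d$ to obtain $i(2^{n+1}d)=i(2d')=2i(d')=2^{n+1}i(d)$.

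The one step that deserves care is the odd case $j=2k+1$ of the correspondence, where one must notice that complementing a constant $\{\mathtt{0},\mathtt{1}\}$-valued sequence leaves it constant, so odd and even starting positions contribute on an equal footing. With that noted, the argument is a clean two-to-one reduction and I do not anticipate any further obstacle.
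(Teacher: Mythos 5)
Your proof is correct and rests on the same ingredients as the paper's: the self-similarity identities $t_{2k}=t_k$, $t_{2k+1}=\overline{t_k}$, the fact $A(2d)=A(d)$, and induction on $n$. The only difference is cosmetic --- you package the two inequalities $i(2d)\leq 2i(d)$ and $i(2d)\geq 2i(d)$ into a single two-sided correspondence between starting positions, whereas the paper proves them separately (the first by applying the substitution to the earliest MAP, the second by the even/odd case split you also use); this is a mild streamlining, not a different route.
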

\begin{proof}
We prove the statement for $n=1$, from which the full statement follows by a simple induction.

First, because $A(2d)=A(d)$, and the Thue--Morse substitution has constant substitution length $\ell=2$, by simply applying the substitution to the earliest MAP of length $d$, we get that $i(2d)\leq 2i(d)$. It now remains to show that $i(2d)\geq 2i(d)$.
 
For that, we use another defining property of $\bft$ that for all $n$, $t_{2n} = t_n$ and $t_{2n+1} = \overline{t_n}$. Suppose that $i(2d)$ is even.
    Then as
    \[t_{i(2d)} = t_{i(2d)+2d} = \cdots = t_{i(2d)+(A(d)-1)2d},\]
    we must also have
    \[t_{i(2d)/2} = t_{i(2d)/2+d} = \cdots = t_{i(2d)/2+(A(d)-1)d}.\]
    Hence $i(2d)/2$ is the first position of a MAP of difference $d$ and length $A(2d) = A(d)$.
    So, $i(d) \leq i(2d)/2$, giving $2i(d) \leq i(2d)$.

    Now suppose that $i(2d)$ is odd.
    Then as
    \[t_{i(2d)} = t_{i(2d)+2d} = \cdots = t_{i(2d)+(A(d)-1)2d},\]
    we must also have
    \[\overline{t_{(i(2d)-1)/2}} = \overline{t_{(i(2d)-1)/2+d}} = \cdots = \overline{t_{(i(2d)-1)/2+(A(d)-1)d}}.\]
    Hence $(i(2d)-1)/2$ is the first position of a MAP of difference $d$ and length $A(2d) = A(d)$.
    So, $i(d) \leq (i(2d)-1)/2$, giving $2i(d) \leq 2i(d)+1 \leq i(2d)$.
\end{proof}
\begin{rem}The above proof can easily be adapted to show that for any fixed point of a bijective substitution of length $\ell$, we have that for all $d \geq 1$ and $n \geq 0$, $i(\ell^nd)=\ell^ni(d)$. Here, by \emph{bijective substitution}, we mean a substitution $\theta$ of constant length $\ell$ such that for each $0 \leq i \leq \ell-1$ there exists a bijection $\alpha_i \colon \mc A \to \mc A$ such that for all $a \in \mc A$, $\theta(a)_i = \alpha_i(a)$.\end{rem}

\subsubsection{Preimages of \texorpdfstring{$A(d)$}{A(d)}} Though Proposition \ref{Prop:i2deqid} is proved using traditional methods, in the initial attempt to use {\walnut} to prove it, an interesting fact emerged which we mention below.
\begin{prop}\label{PROP:lod35a4}
    The largest odd $d$ for which $A(d)=4$ is $35$.
\end{prop}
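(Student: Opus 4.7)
The plan is to verify this proposition computationally using \walnut{}, building on the observation from Proposition \ref{tmadnot3} that for odd $d>1$ we already have $A(d) \geq 4$. Thus the statement $A(d) = 4$ for odd $d$ is equivalent to the non-existence of a MAP of difference $d$ and length $5$. The strategy is then in two parts: (i) verify that $A(35) = 4$, and (ii) verify that for every odd $d > 35$, a MAP of length $5$ and difference $d$ does exist in $\bft$.

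For part (i), since $35$ is a single fixed value, we can directly exhibit a MAP of length $4$ and difference $35$ in $\bft$, and then run a {\walnut} query of the form {\ttfamily eval nomap5 "\textasciitilde Ei (T[i]=T[i+35] \& T[i]=T[i+70] \& T[i]=T[i+105] \& T[i]=T[i+140])"}, which should return \true{}. Equivalently, one can invoke the commands {\ttfamily lmap} from Section \ref{fixdalgo} with $d=35$. For part (ii), we note that expressing ``a MAP of length $5$ and difference $d$ exists'' for variable $d$ is permissible in extended Presburger arithmetic, because the multiples $2d, 3d, 4d$ are constant scalar multiples of a single variable and can be expanded as repeated additions. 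Concretely, one defines
\begin{verbatim}
def odd "En (d=2*n+1)";
def map5 "Ei (T[i]=T[i+d] & T[i]=T[i+2*d] & T[i]=T[i+3*d] & T[i]=T[i+4*d])";
eval largestodd35 "Ad ($odd(d) & d>35) => $map5(d)";
\end{verbatim}
and the proposition follows if the final {\ttfamily eval} returns \true{}.

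The main obstacle is conceptual rather than technical: we must be confident that the DFA for $\$${\ttfamily map5}$(d)$ correctly captures the predicate over all natural $d$, and that the constant bound $35$ is tight. To guard against this, a natural supplement is to ask {\walnut} to directly construct the DFA accepting $\{d : \$\mathtt{odd}(d) \wedge \neg\$\mathtt{map5}(d)\}$ via a {\ttfamily def} command; once produced, this automaton can be inspected to confirm that it accepts only finitely many values and that the largest accepted value, in binary, equals $100011 = (35)_2$. This also furnishes a complete list of odd $d$ with $A(d)=4$, thereby giving a stronger structural statement than the proposition requires.

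I expect the computation itself to be fast since the Thue--Morse automaton is small and the predicate only involves five term evaluations, so the projection and complementation steps in \walnut{} should terminate quickly. The only care needed is in correctly writing the multiplication by constants and in confirming that the evaluation is done in {\ttfamily msd\_2}, matching the automatic structure of $\bft$.
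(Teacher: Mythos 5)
Your proposal is correct and takes essentially the same route as the paper: both reduce the claim to a \walnut{} computation over the first-order predicates expressing the existence of length-$4$ and length-$5$ MAPs of difference $d$, the paper via a single {\ttfamily def} that builds the DFA of all $d$ with $A(d)=4$ and reads off the largest odd accepted input ($100011=(35)_2$), you via an {\ttfamily eval} over odd $d>35$ together with a check at $d=35$. The only quibble is with your supplementary remark: the automaton for $\{d : \mathtt{odd}(d)\wedge\neg\mathtt{map5}(d)\}$ would also accept $d=1$ (where $A(1)=2$ yet no length-$5$ MAP exists), so it lists odd $d$ with $A(d)\le 4$ rather than exactly $A(d)=4$; this does not affect the main argument.
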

\begin{proof}
    The proof is enumerating the DFA generated by the following \walnut{} command:\\
    {\ttfamily def tmad4 "Ei (T[i]=T[i+d] \& T[i]=T[i+2*d] \& T[i]=T[i+3*d] \& \\T[i]!=T[i+4*d]) \& (Aj (j>=0) => \textasciitilde(T[j]=T[j+d] \& T[j]=T[j+2*d] \& \\ T[j]=T[j+3*d] \& T[j]=T[j+4*d]))";\\
    This outputs the DFA given in Figure \ref{FIG:tmad4}.}
    
    \begin{figure}[ht]
        \centering
        \includegraphics[scale=0.05]{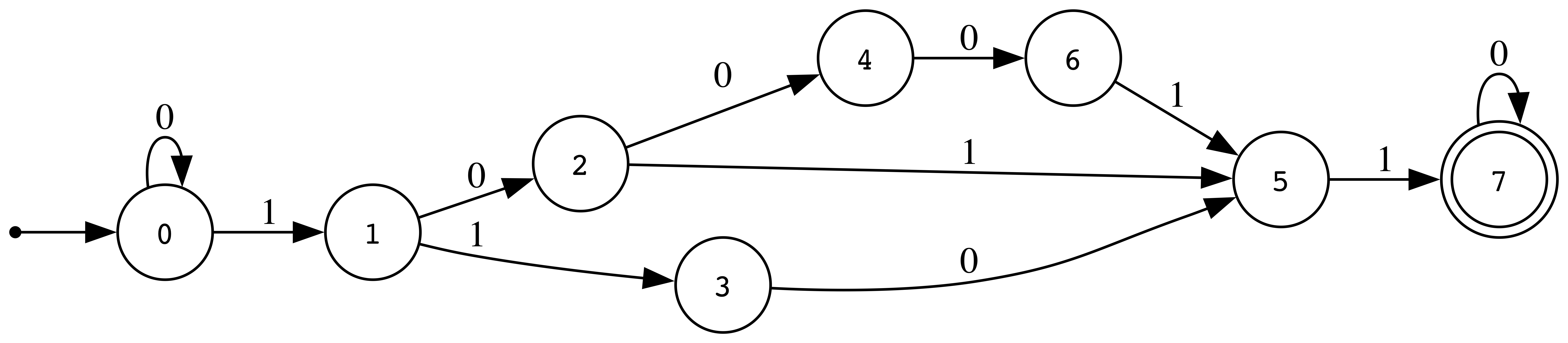}
        \caption{DFA enumerating all values of $d$ for which $A(d)=4$.}
        \label{FIG:tmad4}
    \end{figure}
    The (unique) longest accepted input ending in a {\ttfamily 1} is {\ttfamily 100011}, corresponding to $35$.
\end{proof}
Let $O_{\max} \colon \N \to \N \cup \{-\infty, \infty\}$ denote the maximal odd value of $d$ for which $A(d)=n$. That is,
$O_{\max}(n) \coloneqq \max \{d \mid A(d) = n,\ d \text{ odd}\}$, where it is understood that $O_{\max}(n) \coloneqq -\infty$ if the set of odd $d$ for which $A(d) = n$ is empty, and $O_{\max}(n) \coloneqq \infty$ if there are infinitely many odd $d$ for which $A(d) = n$. Hence, for instance, Proposition \ref{PROP:lod35a4} can be restated as $O_{\max}(4) = 35$. We similarly calculate the following values of $O_{\max}(n)$ in Table \ref{tab:tmlodforad} by suitably modifying the \walnut{} command in the proof of Proposition \ref{PROP:lod35a4}.

\begin{table}[ht]
    \centering
    \begin{tabular}{|c|c|c|c|c|c|c|c|}
    \hline
       $n$  & $1$ & $2$& $3$& $4$& $5$& $6$& $7$ \\
       \hline
       $O_{\max}(n)$ & $-\infty$& $1$& $-\infty$& $35$ & $29$& $\infty$& $\infty$\\
        \hline
    \end{tabular}
    \caption{$n:=A(d)$, and $O_{\max}(n):=$ largest odd $d$ for which $A(d)=n$.}
    \label{tab:tmlodforad}
\end{table}
    The time taken by \walnut{} to find the DFA enumerating $O_{\max}$ grows exponentially. For $n=1,\dots ,5$, it took $3,11,86,1926,21746$ milliseconds respectively. For $n=6$, \walnut{} took approximately 10 minutes. For $n=7$, \walnut{} took approximately an hour and over $300$GB of memory. On hardware available to the authors, \walnut{} was unable to calculate $O_{\max}$ for $n \geq 8$ due to memory limitations. We do not understand the behaviour of the function $O_{\max}$ from the values in Table \ref{tab:tmlodforad}. Indeed, it is unclear if $O_{\max}$ takes mostly finite or infinite values.
This motivates the following set of questions.
\begin{que}
   Are there infinitely many values of $n$ for which $O_{\max}(n) = -\infty$, or is $\{1,3\}$ the full set of values for which $O_{\max} = -\infty$?
   Are there infinitely many values of $n$ for which $O_{\max}(n)=\infty$?
   Are there infinitely many values of $n$ for which $O_{\max}(n)<\infty$, or is $\{2,4,5\}$ the full set of values for which $O_{\max}$ is finite?
\end{que}
\subsubsection{\texorpdfstring{$i(d)$}{i(d)} for particular families of \texorpdfstring{$d$}{d}}
We cannot produce an automaton whose output is $i(d)$ for all $d \geq 1$ simultaneously, as $A(d)$ varies in $d$ and this would therefore require a product of variables in \walnut{}. Likewise, for certain infinite families of $d$, we may conjecture that $i(d)$ takes a particular form, but this cannot be checked if $A(d)$ is not constant on the family. Some conjectures of this form are given below:

Aedo et al.\ showed that for all $n\geq 2$, $A(2^n+1)=2^n+2$ and for all $n\geq 2$, $A(2^n-1)= 2^n+4$ if $n$ is even and $A(2^n-1)=2^n$ if $n$ is odd \cite{aedo2022}. Further to this, based on empirical observations (see \seqnum{A342827}), it would appear that the identities given in Conjecture \ref{CONJ:tmid} hold, but a proof remains elusive.
\begin{conj}\label{CONJ:tmid}
We have
\begin{align*}
    i(2^n+1)&=3.2^{2n}-2^n-1,\\
    i(2^{2n}-1)&=3.2^{4n}-2^{2n}+1,\\
    i(2^{2n+1}-1) &= 2^{2n+1}-1.
\end{align*}
\end{conj}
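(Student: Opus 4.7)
The plan is to work throughout with the classical identity $t_n = s_2(n) \bmod 2$, where $s_2$ denotes the binary digit sum, so that each MAP condition reduces to a constant-parity condition on a sequence of digit sums. For each of the three families $d_n \in \{2^n+1,\, 2^{2n}-1,\, 2^{2n+1}-1\}$, I would separate the argument into an \emph{upper bound} (exhibit a MAP of the full length $A(d_n)$ starting at the conjectured position $i_n$) and a \emph{lower bound} (rule out every earlier starting position $j < i_n$). The value of $A(d_n)$ is already known from \cite{aedo2022}, so this input may be used freely.

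The upper bound for the third identity admits a clean direct proof: write $d = N-1$ with $N = 2^{2n+1}$, take $i = d$, and for $1 \leq k \leq N-1$ use the high--low decomposition $kd = (k-1)N + (N-k)$, which is valid because $0 < N-k < N$. Since $N-1$ is a block of $2n+1$ ones, the elementary bit-complement identity gives $s_2(N-k) = (2n+1) - s_2(k-1)$, and hence
\[s_2(kd) \;=\; s_2(k-1) + s_2(N-k) \;=\; 2n+1,\]
independently of $k$; the case $k=N$ is handled directly since $N(N-1) = 2^{2n+1}(2^{2n+1}-1)$ also has digit sum $2n+1$. Hence every position $kd$ for $1 \leq k \leq N$ carries the letter $\mathtt{1}$, yielding a MAP of length $N = A(d)$. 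For the first and second identities I would attempt analogous decompositions of $i_n + k d_n$, exploiting that $i_n$ has a clean two-block binary shape (for instance $i_n = 2^{2n+1} + 2^{2n} - 2^n - 1$ in the first family), and tracking carries with the Kummer-style identity $t_{a+b} \equiv t_a + t_b + c(a,b) \pmod{2}$, where $c(a,b)$ is the number of binary carries when $a$ and $b$ are added.

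The hard part is the lower bound: for every $j$ in the long range $[0,\, i_n)$, one must exhibit an obstructing $k \in \{0,1,\dots,A(d_n)-1\}$ with $t_{j+kd_n} \neq t_j$. I would attack this via a block decomposition of $j$ according to the position of its highest set bit and the carry pattern triggered when forming $j + k d_n$; the carry-counting identity above is the main tool for identifying, within each block, a single $k$ that flips the parity. A parallel approach is to argue inductively, linking $i(d_{n+1})$ to $i(d_n)$ via the self-similarity $t_{2m}=t_m$, $t_{2m+1}=1-t_m$ (together with Proposition~\ref{Prop:i2deqid} to strip even parts); the surprisingly rigid form $i(2^{2n+1}-1)=2^{2n+1}-1$ strongly hints that some such recursion is available, even though $d_{n+1}$ is not a multiple of $d_n$.

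To guide and partially verify the combinatorial arguments, I would systematically substitute concrete values of $d_n$ and $A(d_n)$ into the {\walnut} template of Section~\ref{fixdalgo} for a range of $n$, both to confirm each instance and to inspect the resulting DFAs for invariants that might reveal the correct parameterisation to induct on. If such an invariant can be isolated, the ultimate goal would be to replace the induction with an automaton operating on a product numeration system that jointly encodes $n$ and the position $j$; this would bypass the variable-multiplication barrier that currently prevents a direct {\walnut} formulation of the whole conjecture.
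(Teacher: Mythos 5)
First, note that the statement you are addressing appears in the paper as Conjecture~\ref{CONJ:tmid}: the authors explicitly record that it is supported by empirical data (OEIS \seqnum{A342827}) but that ``a proof remains elusive'', so there is no proof in the paper to compare against. Your proposal does contain one complete and correct sub-argument: for $d=2^{2n+1}-1$ and $N=2^{2n+1}$, the decomposition $kd=(k-1)N+(N-k)$ together with the complement identity $s_2(N-k)=(2n+1)-s_2(k-1)$ does give $s_2(kd)=2n+1$ for all $1\leq k\leq N$, hence $t_{kd}=\mathtt{1}$ throughout, and since $A(2^{2n+1}-1)=2^{2n+1}$ is known from \cite{aedo2022}, this yields $i(2^{2n+1}-1)\leq 2^{2n+1}-1$. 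That is a genuine and cleanly executed piece of progress.

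However, the proposal as a whole is a plan rather than a proof, and the parts it defers are exactly the parts that make this a conjecture. For each family you still need the matching lower bound $i(d_n)\geq i_n$, i.e.\ for \emph{every} $j<i_n$ an obstructing $k<A(d_n)$ with $t_{j+kd_n}\neq t_j$; your proposed tools (carry counting via $t_{a+b}\equiv t_a+t_b+c(a,b)$, block decomposition by the highest set bit, or a recursion using $t_{2m}=t_m$, $t_{2m+1}=\overline{t_m}$) are reasonable starting points, but none is carried out, and the recursion idea faces the concrete obstacle you yourself note: $d_{n+1}$ is not $2d_n$, so Proposition~\ref{Prop:i2deqid} does not link consecutive members of these families. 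Likewise, the existence statements for the first two families are only announced (``I would attempt analogous decompositions''); for $d=2^n+1$ the positions $i_n+kd$ generate nontrivial carry propagation across the blocks of $i_n$, and it is not evident that a uniform digit-sum identity analogous to the one for $2^{2n+1}-1$ exists there. Finally, running \walnut{} for a range of $n$ verifies only finitely many instances and cannot close the argument, precisely because of the variable-multiplication barrier the paper describes. So the conjecture remains open after your proposal; what you have actually established is the single inequality $i(2^{2n+1}-1)\leq 2^{2n+1}-1$.
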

There are still many other open questions relating to MAPs in the Thue--Morse word. However, as this is not the focus of this work, we address an automatic sequence of interest, the Rudin--Shapiro word, before finally focussing our attention on our principal sequence of interest, the Fibonacci word.

\subsection{The Rudin--Shapiro word}\label{rudinshapiro}
The {\em Rudin--Shapiro} sequence $\bfr:= \mathtt{0001001000011101}\cdots$ 
is another binary automatic sequence of frequent interest \cite{golay1949,shapiro52,rudin59}. It can be defined as a coding of the fixed point of the substitution $\mathtt{a}\mapsto \mathtt{ab},\,\mathtt{b}\mapsto \mathtt{ac},\,\mathtt{c}\mapsto \mathtt{db},\,\mathtt{d}\mapsto \mathtt{dc}$, with coding morphism given by $\mathtt{a},\mathtt{b}\mapsto \mathtt{0}, \text{ and } \mathtt{c},\mathtt{d}\mapsto \mathtt{1}$.
Aedo et al. studied MAPs of generalisations of the Rudin--Shapiro word \cite{aedo2023}. In particular, they showed that there is no infinite MAP in $\bfr$ (\cite[Proposition 41]{aedo2023}). They also proved that $A(2^nd)=A(d)$; in particular $A(2^n)=A(1)=4$ (\cite[Section 4.1]{aedo2023}).
Sobolewski extended these results by considering families of values of $d$ of the form $2^n\pm k$ for some $k$ \cite{sobo23}.
Due to the aforementioned limitations of \walnut{}, we are only able to reprove some of these results by automatic methods; namely when $A(d)$ is constant. In such cases, we are often able to determine the first positions of longest MAPs, $i(d)$.

\begin{prop}
    For all $n\geq0$, $i(2^n)=7\cdot 2^n$.
\end{prop}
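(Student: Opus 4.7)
The approach should mirror the preceding {\walnut}-based arguments in this section, taking advantage of the crucial fact that $A(2^n) = A(1) = 4$ is constant over all powers of $2$. Because the putative maximal MAP length does not depend on $n$, the statement can be quantified over all $d$ of the form $2^n$ simultaneously without introducing any forbidden product of two variables; only constant multiples of the quantified variable $d$ appear in the defining conditions, so the usual obstruction noted in Section \ref{fixdalgo} does not bite here.

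My first step would be to encode ``$d$ is a power of $2$'' as a regular expression, for instance via {\ttfamily reg ispow2 ?msd\_2 "0*10*";}. I would then run two {\ttfamily eval} statements. The first verifies an upper bound on $i(2^n)$: for every $d$ satisfying {\ttfamily ispow2(d)}, position $7d$ starts a MAP of length $4$ and difference $d$, namely $r_{7d} = r_{8d} = r_{9d} = r_{10d}$. The second verifies the matching lower bound: for every such $d$ and every $i < 7d$, at least one of the equalities among $r_i, r_{i+d}, r_{i+2d}, r_{i+3d}$ fails. Both formulae sit squarely inside the decidable fragment available to {\walnut}, as all the arithmetic involved is Presburger.

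Combined with the known value $A(2^n) = 4$, the first check certifies that $7 \cdot 2^n$ does start a length-$4$ MAP of difference $2^n$, giving $i(2^n) \leq 7 \cdot 2^n$, while the second certifies that no earlier position does, giving $i(2^n) \geq 7 \cdot 2^n$. Taken together these deliver the claimed identity $i(2^n) = 7 \cdot 2^n$ uniformly in $n$. Note that, since the coding morphism on $\bfr$ kills the bijectivity used in the proof of Proposition \ref{Prop:i2deqid}, a direct structural induction of the type used for Thue--Morse is not readily available; it is precisely this absence that makes the {\walnut} approach attractive.

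The main obstacle I anticipate is computational rather than logical: intersecting the Rudin--Shapiro DFAO with the ``power of $2$'' automaton inside a quantified formula can in principle cause intermediate automata to grow awkwardly. However, both component automata are very small and only constant-length MAPs are at play, so I expect {\walnut} to dispatch both {\ttfamily eval} checks rapidly, in sharp contrast to the memory-hungry $O_{\max}$ computations earlier in the section.
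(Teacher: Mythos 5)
Your proposal is correct and follows essentially the same route as the paper: exploit the constancy $A(2^n)=4$ so that everything is expressible in {\walnut} without multiplying variables, define a power-of-two automaton, and verify with two checks that position $7d$ starts a length-$4$ MAP of difference $d$ while no earlier position does. The only cosmetic difference is that the paper also re-confirms $A(2^n)=4$ inside {\walnut} rather than citing it as known.
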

\begin{proof}
We first confirm that for all $n \geq 0$, $A(2^n)=4$. It is then only necessary to check that there is a MAP of length $4$ and difference $2^n$ beginning at position $7\cdot 2^n$ and no earlier such MAPs appear.
\begin{itemize}
   \item {\ttfamily def run4rs "RS[i]=RS[i+d] \& RS[i]=RS[i+2*d] \& RS[i]=RS[i+3*d]";\\\# Accepts all pairs $(d,i)$ for which $i$ is the starting position of a MAP of\\ length at least $4$ with difference $d$.}
   \item {\ttfamily def run5rs "RS[i]=RS[i+d] \& RS[i]=RS[i+2*d] \& RS[i]=RS[i+3*d] \&\\ RS[i]=RS[i+4*d]";\\\# Accepts all $i$ for which $i$ is the starting position of a MAP of length at\\ least $5$ with difference $d$.}
   \item {\ttfamily eval rsad4check "Ad (\$power2(d)) => Ei (\$run4rs(d,i) \& Aj \textasciitilde\$run5rs(d,j))";\\\# Returns \true{} if and only if for all $n\geq 0$, $A(2^n)=4$.\\ \walnut{} returns \true{}.}
   \item {\ttfamily eval propcheck "Ad (\$ispower2(d)) => \\(\$run4rs(d,7*d) \& (Aj (j<7*d) => \textasciitilde\$run4rs(d,j)))";\\\# Returns \true{} if and only if for all $n \geq 0$, there is a MAP of length $4$ and\\ difference $2^n$ beginning at position $7\cdot 2^n$ and no earlier such MAP appears.\\ \walnut{} returns \true. }
\end{itemize}
\end{proof}
\subsection{The Fibonacci word}
We now shift our attention to the Fibonacci word, the fixed point of the substitution $\mathtt{0}\mapsto \mathtt{01},\, \mathtt{1}\mapsto \mathtt{0}$. As this substitution is not constant-length, and the Fibonacci word cannot be encoded from the fixed point of a constant-length substitution, we need a more general setting in which to consider this sequence automatic.
\subsubsection{Fibonacci-automatic}
Let $\{F_0,F_1,\ldots\}$ denote the set of Fibonacci numbers defined by $F_0=0$, $F_1=1$ and recursively $F_{n+2}=F_{n+1}+F_{n}$ for all $n\geq0$. Ostrowski \cite{ostro1922}, Lekkerkerker \cite{lekker1951} and Zeckendorf \cite{zecken1972} showed that there is a unique way to represent non-negative integers as the sum of distinct Fibonacci numbers with the restriction that no pair of consecutive Fibonacci numbers are used. This is often called the {\em Zeckendorf representation} or {\em base Fibonacci}.
So, every non-negative integer $n$ has a unique representation of the form
\[n=\sum_{i \geq 2} a_iF_i,\]
where $a_i\in\{0, 1\}$, and $a_ia_{i+1} = 0$ for all $i\geq 2$ \cite{csh1972}. This representation is conventionally written as a binary word $(n)_F$ with msd-first: $(n)_F \coloneqq a_ta_{t-1}\cdots a_2$, with $(0)_F = \varepsilon$, the empty word. For example, $(45)_F=10010100$ as $45=F_9+F_6+F_4$. 
We say an {\em $F$-DFAO} is a DFAO with input read in Zeckendorf representation, and a sequence is {\em Fibonacci-automatic} if and only if an $F$-DFAO generates it (see, e.g., \cite[Section 5.9]{shallit2022}).

\begin{example}\label{fibdfadef}
    The $n$-th term of the Fibonacci word $\bff$ can also be given by the last digit in the Zeckendorf representations of $n$. Therefore, $\bff$ is an output sequence of the $F$-DFAO shown in Figure \ref{fibdfa_fig}, which has states and output alphabet $\mathcal{Q} =\mc A = \{\mathtt{0},\mathtt{1}\}$ with initial state $q_0 = \mathtt{0}$, the input alphabet $\Sigma = \{0,1\}$, with the identity coding map $\tau\colon \mathcal{Q} \to \mathcal{A}$.

For example, in order to determine the $11$-th term in $\bff$, we first find the Zeckendorf representation of $11$ by noting that $11 = 8+3 = F_5+F_3$, so $(11)_F = 10100$. We begin at the initial state $q_0 = \mathtt{0}$ and follow the sequence of edges that reads $10100$ from the most significant digit to the least. So we follow the path $\rightarrow\mathtt{0}\stackrel{1}{\longrightarrow} \mathtt{1} \stackrel{0}{\longrightarrow} \mathtt{0} \stackrel{1}{\longrightarrow}\mathtt{1} \stackrel{0}{\longrightarrow} \mathtt{0}\stackrel{0}{\longrightarrow} \mathtt{0}$ which confirms $f_{11} = \mathtt{0}$. In this simple DFAO, the output is $i$ if and only if the input ends in $i$, but this need not be the case for more complex DFAOs.
\end{example}
\begin{figure}[ht]
\centering
\begin{tikzpicture}
\tikzset{->,>=stealth',shorten >=1pt,node distance=2.5cm,every state/.style={thick, fill=white},initial text=$ $}
\node[state,initial] (q0) {$\mathtt{0}$};
\node[state, right of=q0] (q1) {$\mathtt{1}$};
\draw 
(q0) edge[loop above] node{0} (q0)
(q0) edge[bend left, above] node{1} (q1)
(q1) edge[bend left, below] node{0} (q0)
;
\end{tikzpicture}
\caption{$F$-DFAO generating $\bff$}
\label{fibdfa_fig}
\end{figure}

\subsubsection{Investigating MAPs in the {F}ibonacci word with {\walnut}}\label{SEC:fibwalnutres}
By running the commands {\ttfamily map}, {\ttfamily imap}, {\ttfamily lmap} and {\ttfamily fmap} (from Subsection \ref{fixdalgo}) for the Fibonacci word with fixed values of the difference $d$, we found the values of $A(d)$ and $i(d)$ for $1 \leq d \leq 234$ as listed in Table \ref{tab:fibdadid}. The $A(d)$ values are listed as sequence \seqnum{A339949} on the OEIS \cite{oeis}. Note that when encoding propositions for the  Fibonacci word, {\ttfamily ?msd\_fib} is recalled each time as the necessary numeration system of choice.

\begin{table}[ht]
\centering{
\scalebox{0.84}{
\begin{tabular}{|r|r|r||r|r|r||r|r|r||r|r|r||r|r|r||r|r|r|}
\hline
{\bf{$d$}} & {$A(d)$} & {$i(d)$} & {\bf{$d$}} & {$A(d)$} & {$i(d)$} & {\bf{$d$}} & {$A(d)$} & {$i(d)$} & {\bf{$d$}} & {$A(d)$} & {$i(d)$} & {\bf{$d$}} & {$A(d)$} & {$i(d)$} & {\bf{$d$}} & {$A(d)$} & {$i(d)$}\\
    \hline
    \bf{1} & 2     & 2     & \bf{40} & 3     & 7     & \bf{79} & 4     & 7     & \bf{118} & 9     & 20    & \bf{157} & 20    & 32    & \bf{196} & 5     & 11\\
    \bf{2} & 3     & 3     & \bf{41} & 2     & 0     & \bf{80} & 4     & 16    & \bf{119} & 4     & 2     & \bf{158} & 2     & 2     & \bf{197} & 3     & 7 \\
    \bf{3} & 5     & 20    & \bf{42} & 15    & 20    & \bf{81} & 11    & 87    & \bf{120} & 4     & 3     & \bf{159} & 3     & 11    & \bf{198} & 2     & 0 \\
    \bf{4} & 6     & 16    & \bf{43} & 4     & 23    & \bf{82} & 2     & 2     & \bf{121} & 3     & 2     & \bf{160} & 6     & 20    & \bf{199} & 56    & 28656\\
    \bf{5} & 7     & 11    & \bf{44} & 4     & 11    & \bf{83} & 3     & 32    & \bf{122} & 4     & 21    & \bf{161} & 36    & 70    & \bf{200} & 4     & 31 \\
    \bf{6} & 3     & 20    & \bf{45} & 4     & 20    & \bf{84} & 8     & 20    & \bf{123} & 34    & 32    & \bf{162} & 6     & 87    & \bf{201} & 3     & 3  \\
    \bf{7} & 2     & 0     & \bf{46} & 4     & 8     & \bf{85} & 6     & 2     & \bf{124} & 2     & 2     & \bf{163} & 3     & 7     & \bf{202} & 4     & 2 \\
    \bf{8} & 12    & 143   & \bf{47} & 13    & 11    & \bf{86} & 5     & 32    & \bf{125} & 3     & 3     & \bf{164} & 2     & 0     & \bf{203} & 6     & 783 \\
    \bf{9} & 4     & 2     & \bf{48} & 2     & 2     & \bf{87} & 3     & 2     & \bf{126} & 5     & 7     & \bf{165} & 26    & 88    & \bf{204} & 8     & 11  \\
    \bf{10} & 4     & 11    & \bf{49} & 3     & 11    & \bf{88} & 4     & 55    & \bf{127} & 14    & 16    & \bf{166} & 4     & 10    & \bf{205} & 3     & 54  \\
    \bf{11} & 4     & 54    & \bf{50} & 7     & 20    & \bf{89} & 123   & 231   & \bf{128} & 6     & 11    & \bf{167} & 3     & 3     & \bf{206} & 2     & 0 \\
    \bf{12} & 4     & 8     & \bf{51} & 8     & 36    & \bf{90} & 2     & 2     & \bf{129} & 3     & 7     & \bf{168} & 4     & 7     &\bf{207} & 10    & 54\\
    \bf{13} & 18    & 32    & \bf{52} & 5     & 11    & \bf{91} & 3     & 3     & \bf{130} & 2     & 0     & \bf{169} & 4     & 16    & \bf{208} & 4     & 2\\
    \bf{14} & 2     & 2     & \bf{53} & 3     & 7     & \bf{92} & 5     & 20    & \bf{131} & 17    & 88    & \bf{170} & 10    & 32    & \bf{209} & 4     & 3 \\
    \bf{15} & 3     & 11    & \bf{54} & 2     & 0     & \bf{93} & 8     & 50    & \bf{132} & 4     & 23    & \bf{171} & 2     & 2     & \bf{210} & 3     & 2 \\
    \bf{16} & 6     & 7     & \bf{55} & 77    & 6764  & \bf{94} & 7     & 11    & \bf{133} & 4     & 32    & \bf{172} & 3     & 32    & \bf{211} & 4     & 42 \\
    \bf{17} & 20    & 70    & \bf{56} & 4     & 31    & \bf{95} & 3     & 20    & \bf{134} & 4     & 7     & \bf{173} & 8     & 7     &  \bf{212} & 27    & 32  \\
    \bf{18} & 5     & 3     & \bf{57} & 3     & 3     & \bf{96} & 2     & 0     & \bf{135} & 4     & 8     & \bf{174} & 6     & 91    &   \bf{213} & 2     & 2\\
    \bf{19} & 3     & 7     & \bf{58} & 5     & 376   & \bf{97} & 13    & 54    & \bf{136} & 12    & 11    & \bf{175} & 4     & 3     & \bf{214} & 3     & 3\\
    \bf{20} & 2     & 0     & \bf{59} & 6     & 84    & \bf{98} & 4     & 2     & \bf{137} & 2     & 2     & \bf{176} & 3     & 2     & \bf{215} & 6     & 232 \\
    \bf{21} & 30    & 986   & \bf{60} & 8     & 11    & \bf{99} & 4     & 11    & \bf{138} & 3     & 11    & \bf{177} & 4     & 110   & \bf{216} & 28    & 160\\
    \bf{22} & 4     & 10    & \bf{61} & 3     & 54    & \bf{100} & 4     & 20    & \bf{139} & 7     & 7     & \bf{178} & 62    & 231   &  \bf{217} & 6     & 11\\
    \bf{23} & 3     & 3     & \bf{62} & 2     & 0     & \bf{101} & 4     & 8     & \bf{140} & 6     & 15    & \bf{179} & 2     & 2     &  \bf{218} & 3     & 7\\
    \bf{24} & 4     & 7     & \bf{63} & 10    & 20    & \bf{102} & 16    & 32    & \bf{141} & 5     & 11    & \bf{180} & 3     & 3     &  \bf{219} & 2 & 0 \\
    \bf{25} & 4     & 16    & \bf{64} & 4     & 2     & \bf{103} & 2     & 2     & \bf{142} & 3     & 7     & \bf{181} & 5     & 7     &\bf{220} & 20    & 10945  \\
    \bf{26} & 9     & 11    & \bf{65} & 4     & 3     & \bf{104} & 3     & 11    & \bf{143} & 2     & 0     & \bf{182} & 8     & 16    & \bf{221} & 4     & 10  \\
    \bf{27} & 2     & 2     & \bf{66} & 3     & 2     & \bf{105} & 6     & 7     & \bf{144} & 200   & 75024 & \bf{183} & 7     & 32    & \bf{222} & 4     & 87 \\
    \bf{28} & 3     & 87    & \bf{67} & 4     & 42    & \bf{106} & 12    & 70    & \bf{145} & 4     & 86    & \bf{184} & 3     & 20    &  \bf{223} & 4 & 7\\
    \bf{29} & 9     & 376   & \bf{68} & 24    & 87    & \bf{107} & 5     & 3     & \bf{146} & 3     & 3     & \bf{185} & 2     & 0     & \bf{224} & 4     & 16\\
    \bf{30} & 4     & 2     & \bf{69} & 2     & 2     & \bf{108} & 3     & 7     & \bf{147} & 5     & 20    & \bf{186} & 14    & 20    & \bf{225} & 11    & 11 \\
    \bf{31} & 4     & 3     & \bf{70} & 3     & 3     & \bf{109} & 2     & 0     & \bf{148} & 6     & 16    & \bf{187} & 4     & 2     & \bf{226} & 2     & 2\\
    \bf{32} & 3     & 2     & \bf{71} & 6     & 54    & \bf{110} & 39 & 10945 & \bf{149} & 8     & 87    & \bf{188} & 4     & 11    &  \bf{227} & 3 & 32\\
    \bf{33} & 4     & 21    & \bf{72} & 78    & 304   & \bf{111} & 4     & 65    & \bf{150} & 3     & 20    & \bf{189} & 4     & 20    & \bf{228} & 8     & 4180\\
    \bf{34} & 47    & 87    & \bf{73} & 6     & 11    & \bf{112} & 3     & 3     & \bf{151} & 2     & 0     & \bf{190} & 4     & 8     &  \bf{229} & 6 & 2 \\
    \bf{35} & 2     & 2     & \bf{74} & 3     & 7     & \bf{113} & 4     & 7     & \bf{152} & 11    & 20    & \bf{191} & 14    & 11    & \bf{230} & 5 & 32\\
    \bf{36} & 3     & 3     & \bf{75} & 2     & 0     & \bf{114} & 4     & 3     & \bf{153} & 4     & 2     & \bf{192} & 2     & 2     &   \bf{231} & 3     & 2\\
    \bf{37} & 5     & 7     & \bf{76} & 22    & 2583  & \bf{115} & 9     & 32    & \bf{154} & 4     & 11    & \bf{193} & 3     & 11    & \bf{232} & 4     & 233\\
    \bf{38} & 10    & 16    & \bf{77} & 4     & 11    & \bf{116} & 3     & 609   & \bf{155} & 4     & 143   & \bf{194} & 7     & 88  & \bf{233} & 322     & 608\\
    \bf{39} & 6     & 3     & \bf{78} & 3     & 3     & \bf{117} & 2     & 0     & \bf{156} & 4     & 13    & \bf{195} & 10    & 413   & \bf{234} & 2     & 2\\
    \hline
\end{tabular}
}
}
\caption{Values of $A(d)$, and $i(d)$ for the Fibonacci word calculated using \walnut{}.}
\label{tab:fibdadid}
\end{table}
The following simple Lemma \ref{LEM:idnever1} allows us to always consider $\mathtt{0}$  without loss of generality to be the repeated element in the longest MAP for the Fibonacci word.
\begin{lem}\label{LEM:idnever1}
In the Fibonacci word $\bff$, for all $d \geq 1$, $f_{i(d)} = \mathtt{0}$.
\end{lem}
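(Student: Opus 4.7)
The plan is to use the fact that the Fibonacci word $\bff$ does not contain the factor $\mathtt{11}$. This is immediate from the substitution rule: $\phi(\mathtt{0}) = \mathtt{01}$ and $\phi(\mathtt{1}) = \mathtt{0}$, so when iterating $\phi$, every $\mathtt{1}$ produced appears as the second letter of some image $\phi(\mathtt{0}) = \mathtt{01}$, and is therefore immediately preceded by a $\mathtt{0}$. Consequently, for every $j \geq 1$ with $f_j = \mathtt{1}$, we have $f_{j-1} = \mathtt{0}$.

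Next I would argue by contradiction: suppose $f_{i(d)} = \mathtt{1}$ for some $d \geq 1$. Since $f_0 = \mathtt{0}$, this forces $i(d) \geq 1$. By definition of $i(d)$ and $A(d)$, each of the positions $i(d), i(d)+d, \ldots, i(d)+(A(d)-1)d$ carries the value $\mathtt{1}$. Applying the observation from the first paragraph to each of these positions (all of which are $\geq 1$), the shifted positions $i(d)-1,\, i(d)-1+d,\, \ldots,\, i(d)-1+(A(d)-1)d$ all carry the value $\mathtt{0}$. This produces a MAP of difference $d$ and length $A(d)$ starting at position $i(d)-1$, contradicting the minimality of $i(d)$.

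The remaining case $f_{i(d)} = \mathtt{0}$ is exactly the conclusion we want, so the lemma follows. I do not anticipate any genuine obstacle; the argument is essentially a one-step shift exploiting the $\mathtt{11}$-free property of $\bff$. A small point worth flagging in the write-up is that Lemma \ref{adgeq2} guarantees $A(d) \geq 2$, so that the MAP condition is nontrivial and the shifted progression indeed has the same length $A(d) \geq 2$; but this is used only implicitly since the argument works for any $A(d) \geq 1$.
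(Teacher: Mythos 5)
Your argument is correct and is essentially the paper's own proof: both exploit that every $\mathtt{1}$ in $\bff$ is preceded by a $\mathtt{0}$ (no factor $\mathtt{11}$), shift the putative MAP of $\mathtt{1}$'s one position to the left to obtain a MAP of $\mathtt{0}$'s of the same length, and contradict the minimality of $i(d)$. Your explicit remark that $f_0=\mathtt{0}$ forces $i(d)\geq 1$ is a small point the paper leaves implicit, but otherwise the two proofs coincide.
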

\begin{proof}
Suppose $f_{i(d)}=\mathtt{1}$. Then $f_{i(d)+k\cdot d}=\mathtt{1}$ for all $0\leq k< A(d)$.
In $\bff$, all $\mathtt{1}$'s are isolated. So, each $\mathtt{1}$ is preceded by a $\mathtt{0}$. Hence, for all $0 \leq k<A(d)$, we have $f_{i(d)+k\cdot d-1}=\mathtt{0}$. But this means that $f_{i(d)-1}$ is the starting position of a MAP of length $A(d)$ with difference $d$. It follows that $f_{i(d)}$ cannot be $\mathtt{1}$.
\end{proof}

 \begin{prop}\label{propAd2i02}
For all $d$, if $A(d)=2$ then either $i(d)=0$ or $i(d)=2$.
\end{prop}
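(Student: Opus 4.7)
The plan is to reduce the claim to a small case analysis on $f_d$ and $f_{d+2}$, then dispatch the remaining case using \walnut{}.

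By Lemma \ref{LEM:idnever1}, $f_{i(d)} = \mathtt{0}$, so the first MAP realising $A(d)=2$ must be a $\mathtt{0}$-MAP. Since $\bff$ begins $\mathtt{0100}\ldots$, we have $f_0 = f_2 = f_3 = \mathtt{0}$ and $f_1 = \mathtt{1}$; position $1$ is therefore already ruled out, and the earliest candidate starting positions are $0, 2, 3, \ldots$. A length-$2$ $\mathtt{0}$-MAP with difference $d$ starting at $i$ is exactly the condition $f_i = f_{i+d} = \mathtt{0}$. I would then split into cases on $f_d$. If $f_d = \mathtt{0}$, then $f_0 = f_d = \mathtt{0}$ immediately gives $i(d) = 0$. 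If $f_d = \mathtt{1}$, the candidate at position $0$ fails and position $1$ is excluded; moreover $f_{d+1} = \mathtt{0}$ automatically, since $\bff$ contains no factor $\mathtt{11}$. Hence whenever $f_{d+2} = \mathtt{0}$ I obtain $f_2 = f_{d+2} = \mathtt{0}$ and so $i(d) = 2$.

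It remains to exclude the case $f_d = \mathtt{1}$ and $f_{d+2} = \mathtt{1}$, i.e., a factor $\mathtt{101}$ beginning at position $d$ of $\bff$. I claim this case forces $A(d) \geq 3$, contradicting $A(d) = 2$. The cleanest way to verify the claim is to encode the contrapositive ``for all $d$, if $f_d = \mathtt{1}$ and $f_{d+2} = \mathtt{1}$, then there exists $i$ with $f_i = f_{i+d} = f_{i+2d}$'' as a short first-order predicate over $\bff$ in Zeckendorf representation, and to confirm that \walnut{} returns \true{}. Combining the three cases then yields the proposition.

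The main obstacle is precisely this last case. An elementary proof would need to produce an explicit length-$3$ MAP for each of the infinitely many $d$ carrying a factor $\mathtt{101}$ at position $d$; such $d$ form a structured but non-periodic subset of $\N$ related (via the block structure of $\bff$ under $\phi$) to occurrences of $\mathtt{00}$ in $\bff$, so a purely combinatorial case analysis looks delicate. The \walnut{} verification sidesteps this entirely and is the natural tool given the Fibonacci-automaticity of $\bff$.
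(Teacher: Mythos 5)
Your proof is correct, but it takes a genuinely different route from the paper's. The paper's proof is a single monolithic \walnut{} computation: it first builds an automaton {\ttfamily fibad2} accepting exactly the $d$ with $A(d)=2$, and then verifies the statement {\ttfamily Ad (\$fibad2(d)) => (F[0]=F[d] | F[2]=F[2+d])} directly. You instead do the positional case analysis by hand — using Lemma \ref{LEM:idnever1} to exclude position $1$ and the prefix $\mathtt{0100}$ to reduce everything to the values of $f_d$ and $f_{d+2}$ — and isolate a single residual claim, namely that a factor $\mathtt{101}$ beginning at position $d$ forces $A(d)\geq 3$. That claim is true, and your proposed \walnut{} query for it is expressible (multiplication by the fixed constant $2$ is allowed) and strictly simpler than the paper's, since it never needs to encode the predicate $A(d)=2$ as an automaton. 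What your decomposition buys is a sharper structural statement: it identifies exactly which local configuration at position $d$ is incompatible with $A(d)=2$, and it makes transparent why the dichotomy $i(d)\in\{0,2\}$ holds. It is also worth noting that your residual claim admits a \walnut{}-free proof via the machinery of Section \ref{SEC:results-dynamics}: $f_d=f_{d+2}=\mathtt{1}$ translates (via Proposition \ref{fibdefint}) to $\{(d+1)\tau\}\in[0,\tau^{-4})$, hence $\{d\tau\}\in(\tau^{-2},\tau^{-2}+\tau^{-4})$ and $g(d)\in(\tau^{-2},0.5)$, so $A(d)\geq 4$ by Corollary \ref{COR:ad4andeven}; this would make your argument fully independent of \walnut{}, at the cost of invoking results proved later in the paper (compare Proposition \ref{PROP:ad2i02details}, which reproves the present statement dynamically).
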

\begin{proof} We run the following commands in \walnut{}.
\begin{itemize}
\item{\ttfamily def fibad2 "?msd\_fib Ek (F[k]=F[k+d] \& F[k]!=F[k+2*d]) \& \\(Aj (j>=0) => \textasciitilde(F[j]=F[j+d] \& F[j]=F[j+2*d]))";
\\\# Accepts the values of $d$ for which $A(d)=2$.\\The $F$-DFAO output by this command is shown in Figure \ref{fibad2fig}.}
\item {\ttfamily eval fibad2fpos0or2 "?msd\_fib Ad (\$fibad2(d)) => (F[0]=F[d] | F[2]=F[2+d])";\\\# Checks that if $A(d)=2$, then either $i(d)=0$ or $i(d)=2$.\\\walnut{} returns \true.}
\end{itemize} 
\begin{figure}[ht]
\centering
\includegraphics[scale=0.04]{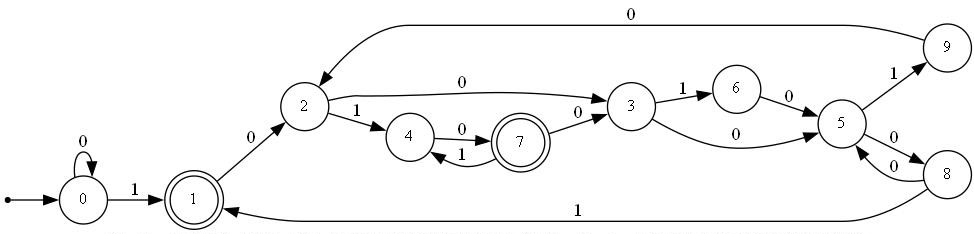}
\caption{DFA accepting only the values of $d$ for which $A(d)=2$.}
\label{fibad2fig}
\end{figure}
\end{proof}

\begin{rem} The DFA obtained from {\ttfamily fibad2} in Figure \ref{fibad2fig} is also mentioned in Shallit's book \cite[Figure 8.3]{shallit2022}, and enumerates sequence \seqnum{A339950} in the OEIS.
\end{rem}

The converse of a part of Proposition \ref{propAd2i02} that if $i(d)=2$ then $A(d)=2$ is \false \ (from Table \ref{tab:fibdadid}), because $i(9)=2$ but $A(9)=4\neq2$.
We claim that the converse of the other part of Proposition \ref{propAd2i02} holds. That is, for all $d$, if $i(d)=0$ then $A(d)=2$. But since $i(d)$ depends on $d$ and  $A(d)$, it cannot be encoded in \walnut. Fortunately, we are able to prove it in Proposition \ref{PROP:id0Ad2} using traditional dynamical methods.

\begin{prop}\label{PROP:ad3fib}
    For all $d$, $A(d) = 3$ if and only if $d$ is accepted by the automaton in Figure \ref{fig_Ad3fib}.
    \begin{figure}[ht]
    \includegraphics[scale=0.046]{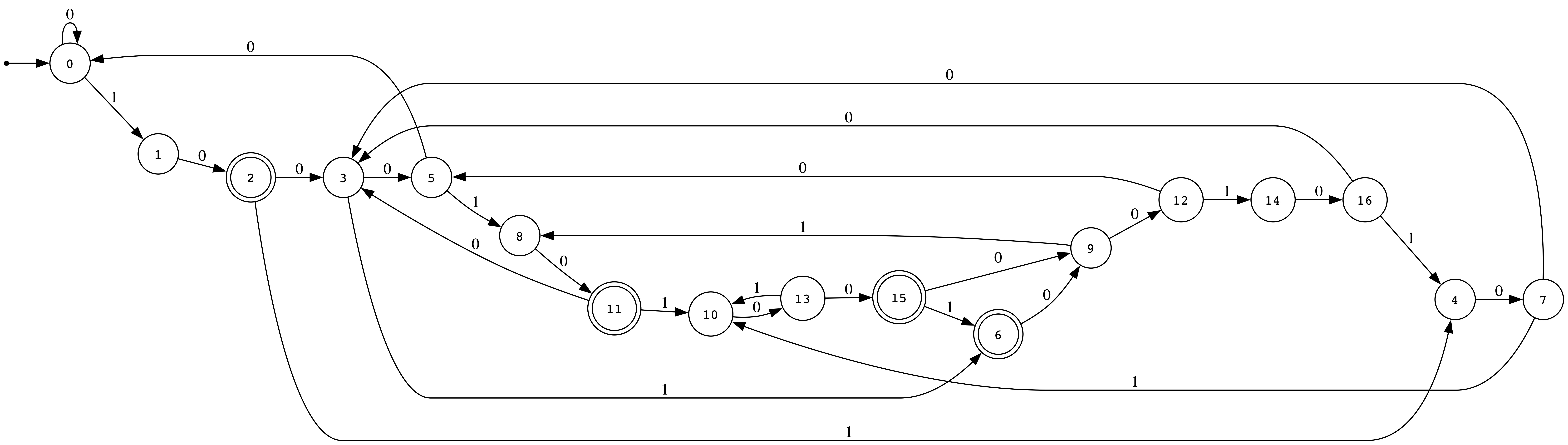}
    \caption{DFA accepting every $d$ for which $A(d)=3$.}
    \label{fig_Ad3fib}
    \end{figure}
\end{prop}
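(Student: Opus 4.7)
The plan is to proceed in exactly the same spirit as Proposition \ref{propAd2i02} and delegate the work to \walnut{}. The condition ``$A(d)=3$'' is first-order expressible in the Fibonacci-automatic framework: it asserts the existence of a MAP of length $3$ with difference $d$, together with the non-existence of any MAP of length $4$ with difference $d$. Crucially, the multiples $2d$ and $3d$ are permissible in extended Presburger arithmetic, since the factors $2$ and $3$ are fixed constants and no product of two unbounded variables appears.

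Concretely, I would issue a single definition of the form\\
{\ttfamily def fibad3 "?msd\_fib (Ei F[i]=F[i+d] \& F[i]=F[i+2*d]) \& (Aj \textasciitilde(F[j]=F[j+d] \& F[j]=F[j+2*d] \& F[j]=F[j+3*d]))";}\\
By the decidability theorem underpinning \walnut{} (see \cite[Theorem 6.3.2]{shallit2022}), this constructs and minimises an $F$-DFA whose accepted language, over Zeckendorf representations read msd-first, is exactly $\{d : A(d)=3\}$. Note that this directly mirrors the {\ttfamily lmap} pattern in Subsection \ref{fixdalgo}, specialised to length $3$ and expanded into a single command so that $d$ remains the only free variable.

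The remaining step is purely combinatorial: one verifies that the minimal DFA returned by \walnut{} is isomorphic, up to relabelling of states, to the one drawn in Figure \ref{fig_Ad3fib}. The main thing to watch for is the reading convention, which here must be msd-first with Zeckendorf input; this is enforced by the {\ttfamily ?msd\_fib} prefix and is the default throughout this subsection. No genuine obstacle arises, since once the encoding is set up, the content of the biconditional is entirely handled by the automatic minimisation performed by \walnut{}. Should the memory footprint of the naive command prove uncomfortable, one can precompute the length-$3$ and length-$4$ MAP predicates via intermediate {\ttfamily def} statements (as in Subsection \ref{rudinshapiro}) and then combine them, but the empirical data in Table \ref{tab:fibdadid} already suggests that the straightforward approach terminates quickly.
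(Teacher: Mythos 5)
Your proposal is correct and is essentially identical to the paper's proof, which runs the single \walnut{} command {\ttfamily def fibad3aut "?msd\_fib Ek (F[k]=F[k+d] \& F[k]=F[k+2*d]) \& (Aj \textasciitilde(F[j]=F[j+d] \& F[j]=F[j+2*d] \& F[j]=F[j+3*d]))";} and identifies the resulting minimal $F$-DFA with the one in Figure \ref{fig_Ad3fib}. The only cosmetic difference is the bracketing around your existential quantifier, which should scope over both conjuncts of the length-$3$ condition as in the paper's version.
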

\begin{proof}
We run the following command in \walnut.
\begin{itemize}
\item {\ttfamily def fibad3aut "?msd\_fib Ek (F[k]=F[k+d] \& F[k]=F[k+2*d]) \& \\ (Aj \textasciitilde(F[j]=F[j+d] \& F[j]=F[j+2*d] \& F[j]=F[j+3*d]))";\\\# Accepts $d$ if $A(d)=3$.\\ The $F$-DFAO output by this command is shown in Figure \ref{fig_Ad3fib}.}
\end{itemize}
\end{proof}
Upon enumerating, we find the following sequence of values of $d$ for which $A(d)=3$:
\[ 2,6,15,19,23,28,32,36,40,49,\dots.\]
(See more values in Table \ref{tab:fibdadid}.)
As can be seen from the automaton, even though the sequence of accepted values for $A(d)=3$ is automatic, it is not simple to describe these values, say by an easy-to-understand regular expression. Still, the above suggests that we can similarly produce automata that enumerate all values of $d$ for which $A(d) = l$ for some fixed $l$. Indeed, this is made precise in Proposition \ref{PROP:gdranges}.

We now investigate the values of $A(d)$ and $i(d)$ for specific families of values of $d$.

Propositions \ref{PROP:dfn-1fib} and \ref{PROP:dfn+1fib} completely describe the values of $A(d)$ and $i(d)$ when $d=F_n-1$ and $d=F_n+1$ respectively.
\begin{prop}\label{PROP:dfn-1fib}
Let $d=F_n-1$ for $n\geq 3$. We have:
\[
A(d) =
\begin{cases}
2 &\text{ if } n=3,\\
3 &\text{ if } n=4,\\
6 &\text{ if } n=5,\\
2 &\text{ if } n \geq 6, \ n\text{ even}, \\
4 &\text{ if } n \geq 7, \ n\text{ odd}, 
\end{cases}
\qquad \text{ and } \qquad
i(d) =
\begin{cases}
2 &\text{ if } n=3,\\
3 &\text{ if } n=4,\\
16 &\text{ if } n=5,\\
0 &\text{ if } n \geq 6, \ n\text{ even},\\
F_{n-1} &\text{ if } n \geq 7, \ n\text{ odd}.
\end{cases}
\]
Moreover, we have a partial converse. That is, for all $d \geq 1$, the following hold:
\begin{itemize}
\item $A(d) = 2$ and $i(d)=0$ if and only if $d=F_n-1$ and $n\geq 6$ is even;
\item $A(d) = 4$ and $i(d)=F_{n-1}$ if and only if $d = F_n-1$ and $n\geq 7$ is odd.

\end{itemize}
\end{prop}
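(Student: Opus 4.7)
The proof is entirely via Walnut, exploiting three facts: $\bff$ is Fibonacci-automatic, the set $\{F_n-1 : n \geq 3\}$ is Fibonacci-automatic, and the set of Fibonacci numbers is Fibonacci-automatic. The Zeckendorf representation of $F_n-1$ is the alternating string $1010\cdots$ of length $n-2$ (by the telescoping identity $F_n - 1 = F_{n-2}+F_{n-4}+\cdots$), so the $d$ of interest for even $n\geq 6$ form the regular language $R_{\mathrm{e}}$ defined by the expression \texttt{(10)(10)(10)*}, and for odd $n\geq 7$ the regular language $R_{\mathrm{o}}$ defined by \texttt{(10)(10)(10)*1}.

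For the forward direction, the small cases $n \in \{3,4,5\}$ can be read directly from Table \ref{tab:fibdadid} (or re-verified by running the fixed-difference commands of Subsection \ref{fixdalgo} at $d = 1, 2, 4$). For even $n \geq 6$, a single Walnut \texttt{eval} confirms that for every $d \in R_{\mathrm{e}}$ one has $f_0 = f_d$ and no length-$3$ MAP of difference $d$ exists anywhere; invoking Lemma \ref{adgeq2} then yields $A(d) = 2$ with $i(d)=0$. For odd $n \geq 7$, I similarly verify via Walnut that for every $d \in R_{\mathrm{o}}$, no length-$5$ MAP of difference $d$ exists and the earliest length-$4$ MAP begins at $F_{n-1}$. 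The condition that the MAP begins at $F_{n-1}$ is encoded via the predicate ``there exists $m < d$ such that $m$, $d+1$, and $m+d+1$ are all Fibonacci numbers, and the first length-$4$ MAP of difference $d$ begins at $m$''; this automatically forces $m = F_{n-1}$ when $d+1 = F_n$, since three distinct Fibonacci numbers summing as $a + b = c$ are necessarily consecutive. Fibonacci numbers are Walnut-definable via the regex \texttt{10*}.

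For the partial converse, I construct the two sets $\{d : A(d) = 2 \text{ and } i(d) = 0\}$ and $\{d : A(d) = 4 \text{ and } i(d) = F_{n-1} \text{ where } d = F_n - 1\}$ as Walnut automata, then check equivalence with $R_{\mathrm{e}}$ and $R_{\mathrm{o}}$ respectively using Walnut's language-equivalence routine. The first is straightforward: intersect \texttt{fibad2} from Proposition \ref{propAd2i02} with the condition $f_0 = f_d$. The second is the main obstacle, since the constraint $i(d) = F_{n-1}$ depends implicitly on $d$. I overcome this with the same consecutive-Fibonacci encoding as above: define the automaton accepting those $d$ for which there exists $m < d$ satisfying (i) $m$, $d+1$, and $m+d+1$ are Fibonacci numbers, (ii) a length-$4$ MAP of difference $d$ begins at $m$, (iii) no earlier length-$4$ MAP of difference $d$ exists, and (iv) no length-$5$ MAP of difference $d$ exists. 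Projecting onto $d$ and checking equivalence with $R_{\mathrm{o}}$ concludes the proof.
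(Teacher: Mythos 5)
Your forward direction is essentially the paper's proof: the authors likewise dispatch $n=3,4,5$ by direct computation and handle $n\geq 6$ by encoding the family $\{F_n-1\}$ as a Fibonacci-regular language and running \walnut{} queries. Your arithmetic encoding of the constraint $i(d)=F_{n-1}$ (``there is $m<d$ with $m$, $d+1$ and $m+d+1$ all Fibonacci'', which forces $m=F_{n-1}$ because $F_j+F_n$ lies strictly between $F_n$ and $F_{n+1}$ for $j<n-1$) is a valid alternative to the paper's two-track regular expression \texttt{adjfibnodd} accepting the pairs $(F_n,F_{n-1})$; for $n\geq 7$ the consecutive-sum argument is sound.

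The gap is in your verification of the partial converse. You propose to build the automaton for $\{d : A(d)=2 \text{ and } i(d)=0\}$ and test language equivalence with $R_{\mathrm e}$. That test will return \false{}: the left-hand set strictly contains $R_{\mathrm e}$. For instance $A(41)=2$ and $i(41)=0$ (see Table \ref{tab:fibdadid}; indeed $\|41\tau\|\approx 0.339$ lies in the $A(d)=2$ level set $(\tau^{-1}/2,\tau^{-2}]$ and $\{41\tau\}<0.5$, so Proposition \ref{PROP:ad2i02details} gives $i(41)=0$), yet $42$ is not a Fibonacci number; the same happens for $d=62,75,96,109,\dots$. So the literal ``for all $d\geq1$'' reading of the first bullet is not provable, and the paper's own command quietly avoids the issue: \texttt{prop\_neven} quantifies only over $d$ with $d+1$ a Fibonacci number, i.e.\ it establishes the biconditional only \emph{within} the family $d=F_n-1$, so the ``converse'' is really over the parity of $n$. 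To make your argument go through, you must likewise intersect your candidate sets with the language $\{F_n-1 : n\geq 6\}$ before testing equivalence (equivalently, check that $R_{\mathrm e}$ and $R_{\mathrm o}$ partition that family according to the two $(A,i)$ behaviours). The second bullet is less exposed only because the condition $i(d)=F_{n-1}$ already presupposes $d=F_n-1$.
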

    \begin{proof}
    The values of $A(d)$ and $i(d)$ for $n=3,4,5$ can be checked by hand, or verified from the data presented in Table \ref{tab:fibdadid}. Recall {\ttfamily fibad2} from the proof of Proposition \ref{propAd2i02}.
    For all $n \geq 6$, we run the following \walnut{} commands:
\begin{itemize}
    \item {\ttfamily reg fiboyes msd\_fib "0*10*";\\\# Accepts all Fibonacci numbers.}
    \item {\ttfamily reg f\_nevenyes msd\_fib "0*1(00)*";\\\# Accepts all even-indexed Fibonacci numbers.}
    \item {\ttfamily reg f\_noddyes msd\_fib "0*10(00)*";\\\# Accepts all odd-indexed Fibonacci numbers.}
    \item {\ttfamily reg adjfibnodd msd\_fib msd\_fib "[0,0]*[1,0][0,1]([0,0][0,0])*";\\\# Accepts all pairs of consecutive Fibonacci numbers $(F_n,F_{n-1})$ for $n \geq 3$ odd.}
    \item {\ttfamily def fibad4fpos "?msd\_fib (F[i]=F[i+d] \& F[i]=F[i+2*d] \&\\ F[i]=F[i+3*d] \& F[i]!=F[i+4*d]) \& (Aj (j>=0) =>  \textasciitilde(F[j]=F[j+d] \&\\ F[j]=F[j+2*d] \& F[j]=F[j+3*d] \& F[j]=F[j+4*d])) \& (Ak (k<i) => \\\textasciitilde(F[k]=F[k+d] \& F[k]=F[k+2*d] \& F[k]=F[k+3*d]))";\\ 
    \# Accepts all pairs $(d,i)$ for which $A(d)=4$, and the starting position of the \\first MAP of length $4$ is $i$.}
	\item {\ttfamily eval prop\_neven "?msd\_fib Ad (\$fiboyes(d+1) \& d>=7) => \\(\$fibeven(d+1) <=> (\$fibad2(d) \& F[d]=F[0]))";\\ \# This checks that for all $d=F_n-1\geq 7$, $n$ is even if and only if $A(d)=2$ and $i(d)=0$.\\\walnut{} returns \true.}
        \item {\ttfamily eval prop\_nodd "?msd\_fib Ad (\$fiboyes(d+1) \& d>=12) => \\(\$fibodd(d+1) <=> Ei (\$fibad4fpos(d,i) \& \$adjfibnodd(d+1,i)))";\\\# Checks that for all $d=F_n-1\geq 12$, $n$ is odd if and only if $A(d)=4$ and $i(d)=F_{n-1}$.\\\walnut{} returns \true.}
\end{itemize}
    \end{proof}
\begin{prop}\label{PROP:dfn+1fib}
Let $d=F_n+1$ for $n\geq 0$. We have:
\[
A(d) =
\begin{cases}
2 &\text{ if } n=0,\\
3 &\text{ if } n=1,2,5,\\
5 &\text{ if } n=3,\\
6 &\text{ if } n=4,\\
4 &\text{ if } n \geq 6, \ n\text{ even}, \\
2 &\text{ if } n \geq 7, \ n\text{ odd}, 
\end{cases}
\qquad \text{ and } \qquad
i(d) =
\begin{cases}
2 &\text{ if } n=0,\\
3 &\text{ if } n=1,2,\\
20 &\text{ if } n=3,5,\\
16 &\text{ if } n=4,\\
F_{n-1}-3 &\text{ if } n \geq 6, \ n\text{ even}, \\
2 &\text{ if } n \geq 7, \ n\text{ odd}.
\end{cases}
\]
\end{prop}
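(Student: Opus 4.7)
My plan is to follow the template of the proof of Proposition \ref{PROP:dfn-1fib} closely. The cases $n \in \{0, 1, 2, 3, 4, 5\}$ (that is, $d \in \{1, 2, 3, 4, 6\}$) can be read off directly from Table \ref{tab:fibdadid}, or verified by short hand calculations. The substance of the proof is thus handling the two infinite families $n \geq 6$ even and $n \geq 7$ odd via {\walnut}.

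For the odd family, I would introduce a regular expression accepting precisely the Zeckendorf representations of $F_n + 1$ with $n \geq 7$ odd. Note that $F_n + 1$ has Zeckendorf form of the shape $1\,0^{n-3}\,1$ for $n \geq 4$, so the parity of the number of inner zeros encodes the parity of $n$. Combining this regex with the predicate {\ttfamily fibad2} from the proof of Proposition \ref{propAd2i02}, a single {\ttfamily eval} command verifies that $A(d) = 2$ for all such $d$, and an auxiliary check that {\ttfamily F[2]=F[2+d]} while {\ttfamily F[0]!=F[d]} pins down $i(d) = 2$, using Proposition \ref{propAd2i02} to conclude that $i(d) \in \{0, 2\}$ a priori.

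For the even family, I would introduce a two-letter regex in the style of {\ttfamily adjfibnodd} that accepts pairs of Zeckendorf words $(d, c)$ precisely when $d = F_n + 1$ and $c = F_{n-1}$ for some even $n \geq 6$. I would then reuse the predicate {\ttfamily fibad4fpos} from the proof of Proposition \ref{PROP:dfn-1fib}, which accepts triples $(d, i)$ with $A(d) = 4$ and $i = i(d)$, and run an {\ttfamily eval} command asserting that for every admissible pair $(d, c)$ the triple $(d, c - 3)$ is accepted by {\ttfamily fibad4fpos}. This yields the claims $A(d) = 4$ and $i(d) = F_{n-1} - 3$ simultaneously.

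The main obstacle is combinatorial rather than conceptual: designing the synchronised two-letter regex for $(F_n + 1, F_{n-1})$ with $n$ even requires a careful alignment, since the trailing $1$ in the Zeckendorf representation of $F_n + 1$ must be tracked alongside the shift from $F_n$ to $F_{n-1}$ while maintaining the correct parity of $n$. Once the regexes are specified, the decidability of Büchi arithmetic over Zeckendorf representations (as implemented in {\walnut}) handles the remaining verification, and Table \ref{tab:fibdadid} acts as a sanity check ruling out any counterexamples at small $d$.
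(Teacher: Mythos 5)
Your proposal is correct and is precisely the route the paper intends: the paper explicitly leaves this proposition as a \walnut{} exercise ``similar to the proof of Proposition \ref{PROP:dfn-1fib}'', and you carry out exactly that template --- small cases from Table \ref{tab:fibdadid}, a parity-encoding regular expression for the Zeckendorf word $1\,0^{n-3}\,1$ of $F_n+1$ combined with {\ttfamily fibad2} for the odd family, and a synchronised two-track regex feeding {\ttfamily fibad4fpos} for the even family. The details check out (e.g.\ $i(F_6+1)=F_5-3=2$ agrees with the table), so nothing further is needed.
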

The proof follows similarly to the proof of Proposition \ref{PROP:dfn-1fib} and so we leave this as a \walnut{} exercise for the interested reader.\hfill\qed

\begin{prop}\label{PROP:dfn+2fib}
Let $d=F_n+2$ for $n\geq 0$. We have:
\[
A(d) =
\begin{cases}
3 &\text{ if } n=0,\\
5 &\text{ if } n=1,2,\\
6 &\text{ if } n=3,\\
7 &\text{ if } n=4,\\
2 &\text{ if } n=5,\\
4 &\text{ if } n=6,\\
3 &\text{ if } n\geq7,
\end{cases}
\qquad \text{ and } \qquad
i(d) =
\begin{cases}
3  &\text{ if } n=0,\\
20 &\text{ if } n=1,2,\\
16 &\text{ if } n=3,\\
11 &\text{ if } n=4,6,7,\\
0  &\text{ if } n=5,\\
3  &\text{ if } n\geq8.
\end{cases}
\]
In particular, if $d=2$ or $d=F_n+2$ for some $n\geq8$, then $A(d)=i(d)=3$.
\end{prop}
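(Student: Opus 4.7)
The plan is to mimic the {\walnut}-based strategy used in the proofs of Propositions \ref{PROP:dfn-1fib} and \ref{PROP:dfn+1fib}. The finitely many cases $n=0,1,\ldots,7$ correspond to $d\in\{2,3,4,5,6,7,10,15\}$, each of which is directly verifiable from Table \ref{tab:fibdadid} (or by re-running the {\ttfamily lmap} and {\ttfamily fmap} commands of Section \ref{fixdalgo} at each such $d$). The final ``in particular'' clause is then the $n=0$ entry combined with the infinite family $n\geq 8$. Thus the only substantive step is a single {\walnut} certificate that $A(d)=i(d)=3$ simultaneously for every $d=F_n+2$ with $n\geq 8$.

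For this, I would first declare a regular expression picking out exactly the Zeckendorf representations of $\{F_n+2:n\geq 8\}$. Since $2=F_3$ and the indices $n$ and $3$ differ by at least $2$ whenever $n\geq 5$, the sum $F_n+F_3$ is already in Zeckendorf normal form, and its msd-first digit string is $1\,0^{n-4}\,1\,0$. Requiring $n\geq 8$ is equivalent to ``at least four zeros between the two $\mathtt{1}$s'', so the declaration {\ttfamily reg fnplus2 msd\_fib "0*100000*10";} captures precisely this family.

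Then, reusing the {\ttfamily fibad3aut} predicate from Proposition \ref{PROP:ad3fib} (which isolates $d$ with $A(d)=3$) together with the {\ttfamily map}-style MAP encoding from Section \ref{fixdalgo}, I would evaluate a single {\ttfamily ?msd\_fib} statement asserting that every $d$ accepted by {\ttfamily fnplus2} satisfies {\ttfamily \$fibad3aut(d)} together with {\ttfamily F[3]=F[3+d] \& F[3]=F[3+2*d]} and, for every $j<3$, some equality among $F[j]$, $F[j+d]$, $F[j+2d]$ fails. A {\true} reply certifies $A(d)=3$ and $i(d)=3$ simultaneously for the entire family, which (together with the direct check at $d=2$) also disposes of the ``in particular'' clause.

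The main obstacle I anticipate is purely syntactic rather than mathematical: confirming that {\ttfamily fnplus2} accepts exactly the Zeckendorf strings of $\{F_n+2:n\geq 8\}$, neither missing any valid representation nor accepting spurious small-$n$ values such as $F_3+2=4$ or $F_4+2=5$, which collapse to different Zeckendorf shapes and would violate the claimed equalities. Once the shape $1\,0^{\geq 4}\,1\,0$ is verified to coincide with the intended family, all remaining content is discharged routinely by B\"{u}chi decidability, with no further case analysis required.
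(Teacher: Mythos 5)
Your proposal is correct and takes essentially the same route the paper intends: the paper explicitly leaves this proposition as a \walnut{} exercise analogous to Proposition \ref{PROP:dfn-1fib}, namely checking the finitely many small cases directly (all present in Table \ref{tab:fibdadid}) and certifying the infinite family $d=F_n+2$, $n\geq 8$, with a single automaton query built from the regular expression $1\,0^{\geq 4}\,1\,0$ for the Zeckendorf strings of $F_n+F_3$; your regex and your simultaneous encoding of $A(d)=3$ and $i(d)=3$ are both sound. One trivial slip: $6$ is not of the form $F_n+2$ (for $n=0,\dots,7$ the values are $\{2,3,4,5,7,10,15\}$), so it should be removed from your list of small cases.
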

Again, the proof is left as an exercise in \walnut{}.\hfill\qed
\begin{rem}
    The observant reader may have sought some special families of values of $d$ for which $A(d)=3$ in Figure \ref{fig_Ad3fib}. One such loop in Figure \ref{fig_Ad3fib} reads the accepted value of $d$ in Zeckendorf representation as $F_n+2$ as mentioned in Proposition \ref{PROP:dfn+2fib}. Note that there exist other values of $d$ for which $A(d)=i(d)=3$, not covered by the above proposition. For example, $d=70\neq F_n+2$ for some $n$, while $A(70)=i(70)=3$. The classification of all such values can be done in \walnut{} in a similar fashion to Proposition \ref{PROP:ad3fib}.
\end{rem}
Note that this is not the full extent of our investigation of the values of $A(d)$. In the next section we address a method that applies to all values of $d$ simultaneously. Moreover, the techniques that we introduce can also be applied to determining the values of $i(d)$ (see Section \ref{SEC:altproofs}).

One can formulate similar results to Propositions \ref{PROP:dfn-1fib}, \ref{PROP:dfn+1fib}, and \ref{PROP:dfn+2fib} for $A(F_n\pm k)$ and $i(F_n\pm k)$ for any choice of $k\geq 1$. Notice that we make no mention of the case when $k=0$, because $A(F_n)$ is not eventually constant, as will be shown in the next section. Nevertheless, there appears to be a simple relationship describing the values of $i(d)$ in this case. Empirical evidence generated by \walnut{} suggests making the following Conjecture \ref{CONJ:idfn}. Unfortunately, because $A(F_n)$ is not eventually constant, we once again run into the limitation of \walnut{}'s inability to handle the multiplication of two variables.
\begin{conj}\label{CONJ:idfn}
    We have $i(F_{2n+1}) = F_{2n+3} - 2$, and $i(F_{2n})=F_{4n}-1$.
\end{conj}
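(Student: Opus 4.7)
The conjecture concerns differences $d = F_n$ --- a family for which $A(F_n)$ is not eventually constant, which is precisely what blocks a direct {\walnut} proof, since verifying $f_i = f_{i+F_n} = \cdots = f_{i+(A(F_n)-1)F_n}$ would require multiplying the two quantified variables $k$ and $F_n$. The plan is therefore to give a dynamical proof in the spirit of Section \ref{SEC:results-dynamics}, encoding $\bff$ as the itinerary of the orbit $\{n\alpha\}_{n\geq 0}$ under rotation by $\alpha = \varphi - 1$ on $[0,1)$, with an associated partition $I_{\mathtt{0}} \sqcup I_{\mathtt{1}}$ such that $f_n = \mathtt{a}$ if and only if $\{n\alpha\} \in I_{\mathtt{a}}$. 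By Lemma \ref{LEM:idnever1}, a longest MAP of difference $d$ has repeated symbol $\mathtt{0}$, so it corresponds to $A(d)$ consecutive orbit points $\{(i+jd)\alpha\}$ all falling inside $I_{\mathtt{0}}$.

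The essential Diophantine input is that $\|F_n\alpha\|$ is of order $\varphi^{-n}$, with the sign of $F_n\alpha - F_{n-1}$ alternating in $n$. Rotation by $F_n\alpha$ therefore advances the circle by a tiny step, leftward for one parity of $n$ and rightward for the other. The length $A(F_n)$ is then governed, via the three-distance theorem applied to this slow rotation, by how many such steps fit inside $I_{\mathtt{0}}$, and $i(F_n)$ is the minimum $i$ for which $\{i\alpha\}$ sits at the appropriate end of $I_{\mathtt{0}}$ to begin a maximal-length stretch.

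To extract the closed-form expressions of Conjecture \ref{CONJ:idfn}, I would invoke the Ostrowski numeration attached to $\alpha$, whose denominators are the Fibonacci numbers themselves. Orbit points of the form $\{(F_m - c)\alpha\}$ for small fixed $c$ sit at distances from $0$ and $1$ that are explicit polynomials in $\varphi^{-1}$, controlled by the Ostrowski tail. For odd $n = 2m+1$, one checks that $i = F_{2m+3} - 2$ is the smallest index placing $\{i\alpha\}$ within an explicit $O(\varphi^{-n})$ neighbourhood of the correct boundary of $I_{\mathtt{0}}$, with no earlier index qualifying; the even case should be treated symmetrically but with the opposite endpoint of $I_{\mathtt{0}}$.

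The main obstacle is the striking asymmetry between the two formulae: odd $n$ produces only a one-step shift in Fibonacci index, whereas even $n$ jumps all the way to $F_{4m}$. This should be a consequence of the rotation direction for even $n$ being such that the nearest suitable starting orbit point lies much deeper in, effectively requiring one full renormalization step under the Gauss map of the golden-mean continued fraction. Making this rigorous --- and in particular explaining why the correct minimal index is $F_{4m}-1$ rather than some linear expression in $F_{2m}$'s --- is where the delicate work will lie, and will probably proceed by induction on $m$ that tracks how the substitutive self-similarity of $\bff$ propagates the position of the first maximal MAP from one Fibonacci level to the next.
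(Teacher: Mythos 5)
First, be aware that the paper does not prove this statement: it is deliberately left as Conjecture~\ref{CONJ:idfn}, supported only by \walnut{} data, because the variable product $k\cdot F_n$ blocks an automatic proof and the dynamical machinery of Section~\ref{SEC:results-dynamics} is not pushed far enough to determine $i(F_n)$. Your proposal correctly identifies the natural line of attack---the same one the paper uses for \emph{other} families in Propositions~\ref{PROP:ad2i02details} and~\ref{PROP:ad3id3}: determine the exact subinterval $J_n\subseteq I_0$ of admissible starting points $\hat{x}$ for a MAP of difference $F_n$ and length $A(F_n)=\lceil\tau^{n-1}\rceil$, then compute the least $i$ with $\{(i+1)\tau\}\in J_n$. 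But the proposal stops precisely where a proof would have to begin. Both decisive steps are deferred: ``one checks that $i=F_{2m+3}-2$ is the smallest index placing $\{i\alpha\}$ within an explicit $O(\varphi^{-n})$ neighbourhood of the correct boundary'' is a restatement of the conjecture rather than an argument, and you concede that the even case is ``where the delicate work will lie.'' As written, this is a research plan for a statement the paper itself leaves open, not a proof.

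The gap is concrete and, encouragingly, the asymmetry you flag has a cleaner source than a Gauss-map renormalization. Writing $L_m=\tau^m+(-\tau^{-1})^m$ for the Lucas numbers, one has $\tau^{2n-1}=L_{2n-1}+\tau^{-(2n-1)}$ (just above an integer) but $\tau^{2n}=L_{2n}-\tau^{-2n}$ (just below one). Since $g(F_n)=\tau^{-n}$ by Corollary~\ref{COR:ceiltauad}, the window of admissible starting points has length $\tau^{-1}-(A(F_n)-1)g(F_n)$, which evaluates to $\tau^{-(2n+1)}-\tau^{-(4n+1)}$ for $d=F_{2n+1}$ but collapses to $\tau^{-(4n-1)}$ for $d=F_{2n}$: in the even case the last orbit point only barely fits inside $I_0$, so the window is exponentially smaller relative to the step, and the three-distance theorem then predicts a first hitting time of order $\tau^{4n}$ rather than $\tau^{2n}$. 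This is consistent with $F_{4n}-1$ versus $F_{2n+3}-2$, but consistency of orders of magnitude is not equality: to finish one must locate the endpoints of $J_n$ exactly in $\Z[\tau]$ and carry out the first-return computation (via the three-distance theorem or the Ostrowski expansion of the endpoints), which is exactly the analysis the paper says is ``reserved for the first author's future output.'' None of that appears in your write-up, so the conjecture remains unproved.
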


\section{Dynamically proved results for MAPs in the {F}ibonacci word}\label{SEC:results-dynamics}
\subsection{Maxima of longest MAP lengths}\label{SEC:rot}


In Figure \ref{FIG:tauplot}, the values of $A(d)$ are plotted for the first few values of $d$. 
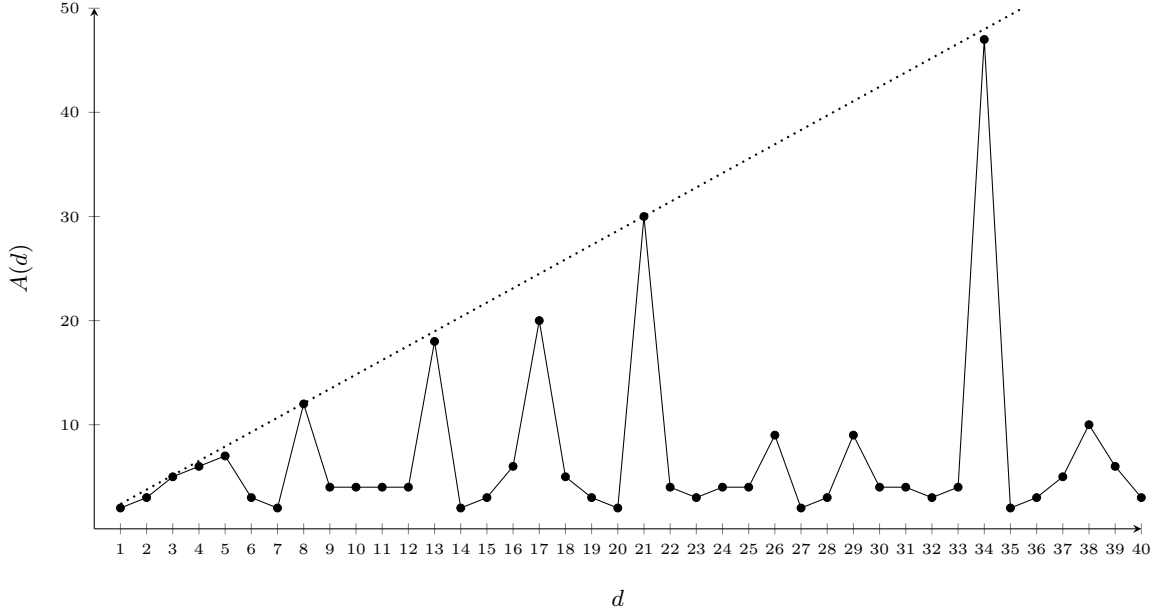
\begin{figure}[ht]
\centering
\begin{tikzpicture}[scale=0.98]
\begin{axis}[
    xmin=0, xmax=40,
    ymin=0, ymax=50,
    xtick distance=1,
    x=10,
    axis lines=center,
    every axis/.append style={font=\tiny},
    axis on top=true,
    x label style={at={(axis description cs:0.5,-0.1)},anchor=north,font=\small},
    y label style={at={(axis description cs:-0.05,0.5)},rotate=90,anchor=south,font=\small},
    xlabel=$d$,
    ylabel=$A(d)$,
    ]
\addplot[only marks, mark size=1.5pt]
    coordinates {
    (1,2)(2,3)(3,5)(4,6)(5,7)(6,3)(7,2)(8,12)(9,4)(10,4)(11,4)(12,4)(13,18)(14,2)(15,3)(16,6)(17,20)(18,5)(19,3)(20,2)(21,30)(22,4)(23,3)(24,4)(25,4)(26,9)(27,2)(28,3)(29,9)(30,4)(31,4)(32,3)(33,4)(34,47)(35,2)(36,3)(37,5)(38,10)(39,6)(40,3)
    };
\addplot[draw=black, 
    ]
    coordinates {
    (1,2)(2,3)(3,5)(4,6)(5,7)(6,3)(7,2)(8,12)(9,4)(10,4)(11,4)(12,4)(13,18)(14,2)(15,3)(16,6)(17,20)(18,5)(19,3)(20,2)(21,30)(22,4)(23,3)(24,4)(25,4)(26,9)(27,2)(28,3)(29,9)(30,4)(31,4)(32,3)(33,4)(34,47)(35,2)(36,3)(37,5)(38,10)(39,6)(40,3)
    };
     \addplot[thick, dotted, domain=1:40, samples=40] ({\x},{sqrt(5)*0.618*\x+1});
\end{axis}
\end{tikzpicture}
\caption{Plot of $A(d)$ for the first few values of $d$; the overall-peak values of $A(d)$ seem to appear at $d=F_n$ as shown with a dotted line.}
\label{FIG:tauplot}
\end{figure}

Let $k\in\N$.
We observe that local maxima of $A(d)$ appear to occur at values $d=F_n$.
Other `locally large' values that are not themselves Fibonacci numbers seem to occur at integer multiples and fractions of Fibonacci numbers, namely when $d=k F_n$ and $d=F_n/k$ for $k \geq 1$. For instance, one can check $A(d)$ values in Table \ref{tab:fibdadid} for $d=17=F_9/2,\,26=2F_7,\,29=F_{14}/13,\, 72=F_{12}/2$. Our goal in this section is to make these observations precise and to use this to completely classify the values of $A(d)$.

We begin with the case that $k=1$ and so $d=F_n$ is a Fibonacci number.

\subsubsection{Rotation sequences}
We recall some basic notation. For a real number $\alpha$, let $\{ \alpha\}$ denote its fractional part. That is, $\alpha=\lfloor \alpha \rfloor + \{ \alpha\}$, where $\lfloor\alpha\rfloor = \max\{n \in \Z \mid n\leq \alpha\}$ is the floor function. We also recall the following well-known identities describing relationships between consecutive Fibonacci numbers $F_n$ for $n\geq2$ involving the golden ratio $\tau = \frac{1}{2}(1+\sqrt{5})$.
\begin{align}
    \tau F_{n-1}+F_{n-2}&=\tau^n\label{aedoequi}\\
    F_{n+1}-\tau F_{n}& = \frac{(-1)^n}{\tau^n}\label{EQ:fibfraclim}
\end{align}

For $d=F_n$, we observe that $A(F_n)$ appears to be given by $\lceil \tau^{n-1} \rceil$.
It is unclear if this statement can be encoded in first-order Presburger arithmetic, and so we are unable to employ automatic tools like \walnut{} in attempting to demonstrate this equality.

The next result provides an equivalent formulation of the Fibonacci word as a {\em Sturmian} word, also known as a \emph{rotation sequence}.
Let $I_0 = (\tau^{-2},1)$ and $I_1 = [0,\tau^{-2})$. These intervals correspond to the reading of a $\mathtt{0}$ or $\mathtt{1}$ in the Sturmian word of slope $\tau^{-1}$.
\begin{prop}[{\cite[Prop.\ 6.1.17]{npfogg}}]\label{fibdefint}
    For all $m\geq0$, the $m$-th term in the Fibonacci word $\bff$ is given by
\[
f_m =
\begin{cases}
    \mathtt{0} & \text{if } \{ (m+1)\tau\}\in I_0\\
    \mathtt{1} & \text{if } \{ (m+1)\tau\}\in I_1.
\end{cases}
\]
\end{prop}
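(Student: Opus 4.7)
The plan is to show that the sequence $\mathbf{s} = (s_m)_{m\geq 0}$ defined by the rotation formula is a fixed point of the Fibonacci substitution $\phi$ beginning with $\mathtt{0}$; by the uniqueness of such a fixed point, this will force $\mathbf{s} = \bff$. First I would observe that, since $\tau = 1 + \tau^{-1}$, one has $\{(m+1)\tau\} = \{(m+1)\tau^{-1}\}$, so $\mathbf{s}$ may equivalently be viewed as the coding of the orbit of the rotation by $\tau^{-1}$ on $[0,1)$ with respect to the partition $I_1 = [0, \tau^{-2})$, $I_0 = (\tau^{-2}, 1)$. A direct check gives $s_0 = \mathtt{0}$, since $\{\tau\} = \tau^{-1} \in I_0$.

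Next I would compute the position in $\mathbf{s}$ at which the block $\phi(s_m)$ must appear in $\phi(\mathbf{s})$. Setting $L(m) = |\phi(s_0 s_1 \cdots s_{m-1})|$ and letting $N_0(m)$ denote the number of $\mathtt{0}$'s among $s_0, \ldots, s_{m-1}$, we have $L(m) = m + N_0(m)$. The condition $s_k = \mathtt{0}$ is equivalent to $\{(k+1)\tau^{-1}\} + \tau^{-1} > 1$, i.e.\ to $\lfloor (k+2)\tau^{-1}\rfloor > \lfloor (k+1)\tau^{-1}\rfloor$. A telescoping sum then gives $N_0(m) = \lfloor (m+1)\tau^{-1}\rfloor$, and thus $L(m) = m + \lfloor(m+1)\tau^{-1}\rfloor$.

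With $L(m)$ in hand, the main technical step is to compute the rotation coordinates at positions $L(m)$ and $L(m)+1$. Writing $y_m = \{(m+1)\tau^{-1}\}$ and using $\tau^2 = \tau+1$ together with $1-\tau = -\tau^{-1}$, a short manipulation yields $\{(L(m)+1)\tau\} = 1 - y_m/\tau$, which always lies in $(\tau^{-2}, 1) = (1-\tau^{-1}, 1)$, so $s_{L(m)} = \mathtt{0}$ for every $m$. A parallel calculation, adding a further $\tau$ and tracking the integer part, yields $\{(L(m)+2)\tau\} = \tau^{-1}(1 - y_m)$, which lies in $I_1$ precisely when $y_m > \tau^{-2}$, i.e.\ precisely when $s_m = \mathtt{0}$. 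Combining these: if $s_m = \mathtt{0}$ then $s_{L(m)} s_{L(m)+1} = \mathtt{01} = \phi(\mathtt{0})$, while if $s_m = \mathtt{1}$ then $s_{L(m)} = \mathtt{0} = \phi(\mathtt{1})$. In either case the $m$-th block of $\phi(\mathbf{s})$ agrees with the corresponding segment of $\mathbf{s}$, so $\phi(\mathbf{s}) = \mathbf{s}$, and uniqueness of the fixed point gives $\mathbf{s} = \bff$.

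The main obstacle will be the bookkeeping in the third paragraph: the identities $\tau^2=\tau+1$ and $1-\tau=-\tau^{-1}$ must be propagated carefully, and one has to pin down the correct integer parts of $(L(m)+1)\tau$ and $(L(m)+2)\tau$ before extracting the fractional parts. Once the two closed-form expressions $\{(L(m)+1)\tau\}=1-y_m/\tau$ and $\{(L(m)+2)\tau\}=\tau^{-1}(1-y_m)$ are established, the match with $\phi(s_m)$ and hence the conclusion follow immediately.
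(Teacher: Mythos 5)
Your proposal is correct, and all of the computations check out. Note that the paper does not prove this proposition at all: it is quoted from Fogg's book (Prop.\ 6.1.17), so any self-contained argument is necessarily "a different route" from the paper's. Your strategy — coding the orbit of the rotation by $\tau^{-1}$, computing the block-start positions $L(m)=m+\lfloor(m+1)\tau^{-1}\rfloor$ via the telescoping identity $N_0(m)=\lfloor(m+1)\tau^{-1}\rfloor$, and then verifying $\{(L(m)+1)\tau\}=1-y_m\tau^{-1}\in I_0$ and $\{(L(m)+2)\tau\}=\tau^{-1}(1-y_m)$ so that the $m$-th block of $\phi(\mathbf{s})$ matches $\mathbf{s}$ — is a standard and complete proof that the rotation coding is the fixed point of $\phi$. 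I verified the two closed forms: writing $n=m+1$, one has $L(m)+1=n\tau-y_m$, hence $(L(m)+1)\tau\equiv -y_m\tau^{-1}\pmod 1$, giving $1-y_m\tau^{-1}\in(\tau^{-2},1)=I_0$; adding $\tau=1+\tau^{-1}$ gives $\tau^{-1}(1-y_m)$, which lies in $I_1$ exactly when $y_m>\tau^{-2}$, i.e.\ when $s_m=\mathtt{0}$. Two small points worth making explicit in a final write-up: (i) the fractional parts $y_m$ never hit the excluded boundary point $\tau^{-2}$ (nor $0$), since $\{(m+2)\tau^{-1}\}=0$ is impossible for irrational $\tau^{-1}$, so the case split is exhaustive; and (ii) the uniqueness of the fixed point of $\phi$ (which the paper asserts in Section \ref{SEC:subs}) is what converts $\phi(\mathbf{s})=\mathbf{s}$ with $s_0=\mathtt{0}$ into $\mathbf{s}=\bff$.
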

For example, as shown in Figure \ref{FIG:fibgenalgo}, the positions of $\{ (m+1)\tau\}$ for $m=0,1,2,3,4,5$ in $[0,1)$ give $f_0=\mathtt{0}, f_1=\mathtt{1}, f_2=\mathtt{0}, f_3=\mathtt{0}, f_4=\mathtt{1},f_5=\mathtt{0}$ respectively.
\begin{figure}[ht]
\centering
\begin{tikzpicture}[scale=12]
    \draw (0,0)-- (0.382,0);
    \draw (0.382,0)-- (1,0);
    \foreach \x in {0,1} {
        \draw (\x,0.02) -- (\x,-0.02) node[below] {\x};
        \draw (0.382,0.02) -- (0.382,-0.02);
    }
    \filldraw[black] (0.618,0) circle (0.2pt);
    \node[below] at (0.618,-0.02) {$\{ \tau\}$};
    \filldraw[black] (0.236,0) circle (0.2pt);
    \node[below] at (0.236,-0.02) {$\{ 2\tau\}$};
    \filldraw[black] (0.854,0) circle (0.2pt);
    \node[below] at (0.854,-0.02) {$\{ 3\tau\}$};
    \filldraw[black] (0.472,0) circle (0.2pt);
    \node[below] at (0.472,-0.02) {$\{ 4\tau\}$};
    \filldraw[black] (0.0902,0) circle (0.2pt);
    \node[below] at (0.0902,-0.02) {$\{ 5\tau\}$};
    \filldraw[black] (0.708,0) circle (0.2pt);
    \node[below] at (0.708,-0.02) {$\{ 6\tau\}$};
    \draw[decorate,decoration={brace,amplitude=20pt}] (0,0) -- (0.382,0) node[midway,above,yshift=20pt] {$I_1$};
    \draw[decorate,decoration={brace,amplitude=20pt}] (0.382,0) -- (1,0) node[midway,above,yshift=20pt] {$I_0$};
\end{tikzpicture}
\caption{Generating the Fibonacci word using Proposition \ref{fibdefint}.}
\label{FIG:fibgenalgo}
\end{figure}
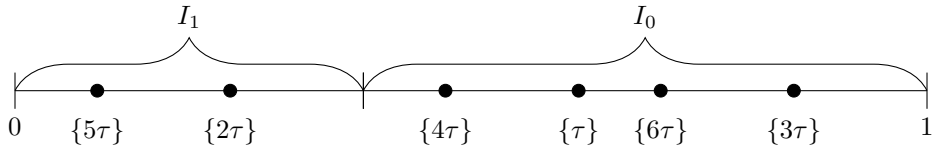

The next lemma follows easily from the formulation of $\bff$ as a rotation sequence in Proposition \ref{fibdefint}. 
\begin{lem}\label{LEM:adobs}
Fix $d\geq1$. There exists a length-$\ell$ MAP of difference $d$ on the symbol $j\in\{\mathtt{0},\mathtt{1}\}$ in $\bff$ starting at $f_{i-1}$ if and only if all of the points $\{ i\tau\}, \{ (i+d)\tau\},  \{ (i+2d)\tau\},\ldots, \{ (i+(\ell -1)d)\tau\}$ belong to the interval $I_j$.
\end{lem}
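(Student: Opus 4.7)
The plan is to unfold both sides of the biconditional and observe that they match term by term via a direct application of Proposition \ref{fibdefint}. There is essentially no real mathematical content here beyond the translation; the lemma just records the rotation-sequence reformulation of what a MAP is, in a form convenient for the subsequent analysis.

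First I would write out the definition of the left-hand side: a length-$\ell$ MAP of difference $d$ on the symbol $j$ starting at position $f_{i-1}$ means precisely that
\[
f_{i-1} = f_{i-1+d} = f_{i-1+2d} = \cdots = f_{i-1+(\ell-1)d} = j,
\]
i.e.\ $f_{i-1+kd} = j$ for every $0 \leq k \leq \ell-1$. By Proposition \ref{fibdefint}, each individual equality $f_m = j$ is equivalent to $\{(m+1)\tau\} \in I_j$; applied with $m = i-1+kd$, this gives that $f_{i-1+kd} = j$ if and only if $\{(i+kd)\tau\} \in I_j$.

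Conjuncting over $k = 0, 1, \ldots, \ell-1$ then yields exactly the statement that all of the points $\{i\tau\}, \{(i+d)\tau\}, \ldots, \{(i+(\ell-1)d)\tau\}$ belong to $I_j$, which is the right-hand side of the biconditional. Since each step is an ``if and only if'', the biconditional follows.

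The only point worth flagging in the write-up is the index shift between $f_{i-1}$ on the sequence side and $\{i\tau\}$ on the rotation side; this mismatch is deliberate, chosen to absorb the ``$+1$'' in Proposition \ref{fibdefint} so that subsequent arguments can refer directly to the orbit $\{i\tau\}, \{(i+d)\tau\}, \ldots$ under the rotation by $d\tau$. I do not anticipate any real obstacle: the proof is just a one-line substitution, and the only things to be careful about are indexing and keeping the direction of the biconditional explicit.
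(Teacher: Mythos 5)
Your proof is correct and matches the paper's intent exactly: the paper simply remarks that the lemma ``follows easily from the formulation of $\bff$ as a rotation sequence in Proposition \ref{fibdefint}'', and your term-by-term unfolding with the index shift $m = i-1+kd$ absorbing the ``$+1$'' is precisely that argument made explicit.
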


The next examples illustrate this correspondence given in Lemma \ref{LEM:adobs} between segments of orbits and MAPs.

\begin{example}\label{EX:f4}
For $d=F_4=3$, $p\geq0$ and $i=21$, the maximum number of successive points in $I_0$ are $5$, and the points $\{ i-F_4\tau\}$ and $\{ i+\ell F_4\tau\}$ are in $I_1$, as shown in Figure \ref{FIG:f4ex}. That is, $A(F_4)=5$.
\begin{figure}[ht]
\centering
\begin{tikzpicture}[scale=12]
    \draw (0,0)-- (0.382,0);
    \draw (0.382,0)-- (1,0);
    \foreach \x in {0,1} {
        \draw (\x,0.02) node[above] {\x}-- (\x,-0.02);
        \draw (0.382,0.02) -- (0.382,-0.02);
    }
    \filldraw[black] (0.125,0) circle (0.15pt);
    \node[below] at (0.125,-0.02) {\small $\{ 18\tau\}$};
    \filldraw[black] (0.979,0) circle (0.15pt);
    \node[below] at (0.979,-0.02) {\small $\{ 21\tau\}$};
    \filldraw[black] (0.833,0) circle (0.15pt);
    \node[below] at (0.833,-0.02) {\small $\{ 24\tau\}$};
    \filldraw[black] (0.687,0) circle (0.15pt);
    \node[below] at (0.687,-0.02) {\small $\{ 27\tau\}$};
    \filldraw[black] (0.541,0) circle (0.15pt);
    \node[below] at (0.541,-0.02) {\small $\{ 30\tau\}$};
    \filldraw[black] (0.395,0) circle (0.15pt);
    \node[below] at (0.395,-0.02) {\small $\{ 33\tau\}$};
    \filldraw[black] (0.249,0) circle (0.15pt);
    \node[below] at (0.249,-0.02) {\small $\{ 36\tau\}$};
    \draw[decorate,decoration={brace,amplitude=20pt}](0,0) -- (0.382,0) node[midway,above,yshift=20pt] {$\tau^{-2}$};
    \draw[decorate,decoration={brace,amplitude=20pt}] (0.382,0) -- (1,0) node[midway,above,yshift=20pt] {$\tau^{-1}$};
\end{tikzpicture}
\caption{A segment of the orbit $(\{(18+3n)\tau\})_{n\geq 0}$ illustrating that $A(3)=5$.}
\label{FIG:f4ex}
\end{figure}
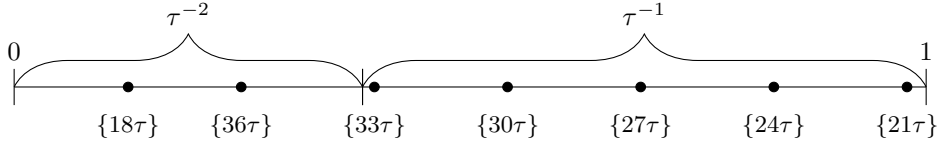
\end{example}
\begin{example}\label{EX:f5}
    For $d=F_5=5$ and $i=12$, we get $A(F_5)=7$ as shown in Figure \ref{FIG:f5ex}.
\begin{figure}[ht]
\centering
\begin{tikzpicture}[scale=12]
    \draw (0,0)-- (0.382,0);
    \draw (0.382,0)-- (1,0);
    \foreach \x in {0,1} {
        \draw (\x,0.02) node[above] {\x}-- (\x,-0.02);
        \draw (0.382,0.02) -- (0.382,-0.02);
    }
    \filldraw[black] (0.326,0) circle (0.15pt);
    \node[below] at (0.326,-0.02) {\small $\{ 7\tau\}$};
    \filldraw[black] (0.416,0) circle (0.15pt);
    \node[below] at (0.416,-0.02) {\small $\{ 12\tau\}$};
    \filldraw[black] (0.507,0) circle (0.15pt);
    \node[below] at (0.507,-0.02) {\small $\{ 17\tau\}$};
    \filldraw[black] (0.597,0) circle (0.15pt);
    \node[below] at (0.597,-0.02) {\small $\{ 22\tau\}$};
    \filldraw[black] (0.687,0) circle (0.15pt);
    \node[below] at (0.687,-0.02) {\small $\{ 27\tau\}$};
    \filldraw[black] (0.777,0) circle (0.15pt);
    \node[below] at (0.777,-0.02) {\small $\{ 32\tau\}$};
    \filldraw[black] (0.867,0) circle (0.15pt);
    \node[below] at (0.867,-0.02) {\small $\{ 37\tau\}$};
    \filldraw[black] (0.957,0) circle (0.15pt);
    \node[below] at (0.957,-0.02) {\small $\{ 42\tau\}$};
    \filldraw[black] (0.048,0) circle (0.15pt);
    \node[below] at (0.048,-0.02) {\small $\{ 47\tau\}$};
    \draw[decorate,decoration={brace,amplitude=20pt}] (0,0) -- (0.382,0) node[midway,above,yshift=20pt] {$\tau^{-2}$};
    \draw[decorate,decoration={brace,amplitude=20pt}] (0.382,0) -- (1,0) node[midway,above,yshift=20pt] {$\tau^{-1}$};
\end{tikzpicture}
\caption{A segment of the orbit $(\{(12+5n)\tau\})_{n\geq 0}$ illustrating that $A(5)=7$.}
\label{FIG:f5ex}
\end{figure}
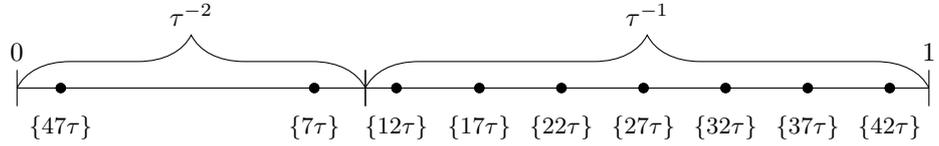
\end{example}

By {\em Weyl's theorem}, as $d\tau$ is irrational for all $d \geq 1$, the sequence of points $(\{ (i+md)\tau\})_{m \geq 0}$ are dense and uniformly distributed in $[0,1)$. 
Then an immediate consequence of Lemma \ref{LEM:adobs} and Weyl's theorem is that MAPs in $\bff$ are all finite in length, as any orbit of the form $(\{(i+nd)\tau\})_{n \geq 0}$ must transition between the intervals $I_0$ and $I_1$ infinitely often. This provides a simple proof in the special case of $\bff$ (and indeed any Sturmian word) of a more general result of Durand and Goyeneche \cite[Proposition 8]{dg2019}.
\begin{prop}\label{PROP:fibadfin}
    The Fibonacci word admits no infinite MAP.\hfill\qed
\end{prop}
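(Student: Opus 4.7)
My plan is to argue by contradiction using the rotation-sequence formulation of $\bff$ together with Weyl's equidistribution theorem, exactly as the paragraph preceding the statement suggests.

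Suppose, for the sake of contradiction, that $\bff$ contains an infinite MAP of difference $d \geq 1$, starting at some position $i-1 \geq 0$, on some symbol $j \in \{\mathtt{0},\mathtt{1}\}$. By Lemma \ref{LEM:adobs}, this is equivalent to requiring that for every $n \geq 0$,
\[
\{(i + nd)\tau\} \in I_j.
\]
In other words, the entire forward orbit of the point $\{i\tau\}$ under the circle rotation $x \mapsto x + d\tau \pmod 1$ is contained in the single interval $I_j$.

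The plan is then to show this is impossible. Since $\tau$ is irrational and $d$ is a positive integer, $d\tau$ is irrational. Weyl's equidistribution theorem therefore applies to the sequence $(\{i\tau + nd\tau\})_{n \geq 0}$, which is consequently equidistributed in $[0,1)$. In particular, the orbit is dense in $[0,1)$, so it cannot be contained in the proper subinterval $I_j$; equivalently, the asymptotic proportion of indices $n$ with $\{(i+nd)\tau\} \in I_{1-j}$ equals the length of $I_{1-j}$, which is positive (either $\tau^{-2}$ or $\tau^{-1}$). This contradicts the assumption that $\{(i+nd)\tau\} \in I_j$ for all $n$.

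There is essentially no obstacle here beyond verifying that the irrationality of $d\tau$ does indeed license Weyl's theorem; the rest is a direct translation via Lemma \ref{LEM:adobs}. The argument works equally well for every difference $d$ and every starting position, so no MAP in $\bff$ can be infinite, as required. \qed
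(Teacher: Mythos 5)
Your argument is correct and is essentially identical to the paper's own proof: both translate an infinite MAP into an orbit of the rotation by $d\tau$ confined to a single interval $I_j$ via Lemma \ref{LEM:adobs}, and both invoke Weyl's theorem (density/equidistribution of $(\{(i+nd)\tau\})_{n\ge 0}$, valid since $d\tau$ is irrational) to force the orbit to enter the complementary interval, a contradiction. Nothing further is needed.
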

Therefore, $A(d)$ always takes finite values.

In order to determine exact values for $A(d)$, it is useful to analyse the pairwise distances between consecutive points of the form $\{(i+nd)\tau\}$ and $\{ (i+(n+1)d)\tau\}$ for some $n\in\Z$ in an orbit.
Let $g(d)$ denote the {\em step distance} defined by 
\begin{equation}\label{DEF:stepdistance}
g(d)=\min\left(\{ d\tau\},1-\{ d\tau\}\right).
\end{equation}
Note that $g(d)=\|d\tau\|$, where $\|x\|=\min|x-\Z|$, which is a common notation from Diophantine approximation denoting the distance to the nearest integer. One should consider two cases that are illustrated in Figure \ref{FIG:stepdisEQ}; either $\{d\tau\}<0.5$, in which case we are \emph{stepping to the right} by a distance of $g(d)$, or else $\{d\tau\}>0.5$, in which case we are \emph{stepping to the left} by a distance of $g(d)$.

\begin{figure}[ht]
\centering
\begin{tikzpicture}[scale=12]
    \draw (0,0)-- (0.382,0);
    \draw (0.382,0)-- (1,0);
    \foreach \x in {0,1} {
        \draw (\x,0.02) -- (\x,-0.02) node[below] {\x};
        \draw (0.382,0.02) -- (0.382,-0.02) node[below] {$\tau^{-2}$};
    }
    
    \draw[-{Stealth[width=2mm]}] (0.39, 0) .. controls (0.42, 0.03) and (0.45, 0.03) .. (0.48, 0);
    \draw[-{Stealth[width=2mm]}] (0.48, 0) .. controls (0.51, 0.03) and (0.54, 0.03) .. (0.57, 0)       
     node[midway, above, yshift=3pt] {$\{ d\tau\}<0.5$};
    \draw[-{Stealth[width=2mm]},dashed] (0.57, 0) .. controls (0.60, 0.03) and (0.63, 0.03) .. (0.66, 0);
    \draw[|<->|] (0.48, -0.015) -- (0.57, -0.015) node[midway, below] {$g(d)$};

    \draw[-{Stealth[width=2mm]}] (0.99, 0) .. controls (0.96, 0.03) and (0.93, 0.03) .. (0.90, 0);
    \draw[-{Stealth[width=2mm]}] (0.90, 0) .. controls (0.87, 0.03) and (0.84, 0.03) .. (0.81, 0)       
     node[midway, above, yshift=3pt] {$\{ d\tau\}>0.5$};
    \draw[-{Stealth[width=2mm]},dashed] (0.81, 0) .. controls (0.78, 0.03) and (0.75, 0.03) .. (0.72, 0);
    \draw[|<->|] (0.81, -0.015) -- (0.90, -0.015) node[midway, below] {$g(d)$};
\end{tikzpicture}
\caption{Illustrating when steps are taken to the right or the left.}
\label{FIG:stepdisEQ}
\end{figure}
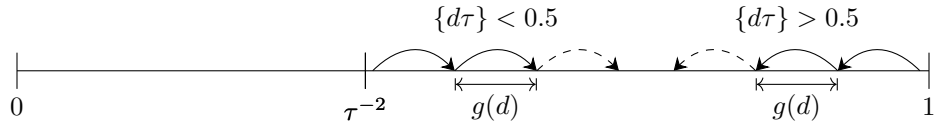

Suppose that each of the points $\{ i\tau\}, \{ (i+d)\tau\}, \{ (i+2d)\tau\}, \ldots, \{ (i+(\ell-1) d)\tau\}$ lies in the interval $I_0$, corresponding to the existence of a MAP of length $\ell$ with difference $d$ by Lemma \ref{LEM:adobs}.
If $g(d)$ is sufficiently small (such as in Figure \ref{FIG:stepdisEQ}), then consecutive pairs in this orbit segment are a distance $g(d)$ apart. So, these pairs of points are the end points of consecutive touching intervals that are all of length $g(d)$. Hence, the sum of these lengths, $\ell g(d)$, must be shorter than the length of $I_0$.
Note that $|I_1|<|I_0|$ and so to maximise the length of a MAP, we may always consider points that fall into $I_0$ (compare Lemma \ref{LEM:idnever1}). As the orbit $\{ i\tau \}$ is dense in $[0,1)$, if $\ell g(d) < |I_0|$, then there exists a point $\{ i \tau\} \in I_0$ as close as we like to an end point of the interval $I_0$ (which end point depends on if we are jumping to the left or the right) so that we can ensure that all of the points $\{ i\tau\}, \{ (i+d)\tau\}, \{ (i+2d)\tau\}, \ldots, \{ (i+(\ell-1) d)\tau\}$ also fall into the interval $I_0$. This means that $\ell g(d) <|I_0|$ implies the existence of a MAP of length $\ell$ with distance $d$.

As soon as $\{ (i+(\ell-1) d)\tau\}$ falls strictly within a distance $g(d)$ of the other end of $I_0$ from $\{i\tau\}$, the next point $\{ (i+\ell d)\tau\}$ must be in $I_1$. This situation is illustrated in Figures \ref{FIG:f4ex} and \ref{FIG:f5ex} (notice the points $\{ 33\tau\}$ and $\{ 36\tau\}$, respectively $\{ 42\tau\}$ and $\{ 47\tau\}$). Then, any orbit segment of greater length must necessarily enter the interval $I_1$ because we have chosen $g(d)$ small enough that one cannot `jump over' the interval $I_1$. This means that there is no MAP of length $\ell+1$ with distance $d$. In other words, $A(d) = \ell$.

Of course, this argument only holds if $g(d)$ is small enough, and in particular if $g(d)<\tau^{-2}$, as in this range, consecutive points in an orbit cannot jump over $I_1$ in order to land back into $I_0$. It is therefore necessary to consider two distinct cases; we either have $0\leq g(d)< \tau^{-2}$ or $\tau^{-2}\leq g(d)\leq 0.5$. The above argument holds in the first case, so we have that $A(d)$ satisfies
\[
(A(d)-1) g(d) < |I_0|\text{ and }A(d)g(d) \geq |I_0|.
\]
As $|I_0| = \tau^{-1}$, this gives the following classification of $A(d)$ in this case.
\begin{theorem}\label{THM:adformula}
    For $g(d)< \tau^{-2}$, \[A(d) = \left\lfloor\frac{|I_0|}{g(d)}\right\rfloor + 1 = \left\lceil\frac{|I_0|}{g(d)}\right\rceil = \left\lceil\frac{\tau^{-1}}{g(d)}\right\rceil.\]\hfill\qed
\end{theorem}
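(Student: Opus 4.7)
The plan is to formalise the geometric argument sketched in the two paragraphs immediately preceding the theorem statement. By Lemma \ref{LEM:idnever1}, every longest MAP is on the symbol $\mathtt{0}$, so by Lemma \ref{LEM:adobs} it suffices to find the largest $\ell$ for which there exists $i \geq 1$ with $\{i\tau\}, \{(i+d)\tau\}, \ldots, \{(i+(\ell-1)d)\tau\}$ all lying in $I_0 = (\tau^{-2},1)$. Consecutive points of such an orbit segment differ (mod $1$) by exactly the step distance $g(d) = \|d\tau\|$, either stepping to the right (if $\{d\tau\} < 1/2$) or to the left (if $\{d\tau\} > 1/2$), as illustrated in Figure \ref{FIG:stepdisEQ}.

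The key observation is that the hypothesis $g(d) < \tau^{-2} = |I_1|$ means no single step can traverse $I_1$ in its entirety. Explicitly, stepping rightward by $g(d)$ from $x \in I_0$ either lands in $I_0$ again or, upon wrapping around $1$, lands in the subinterval $[0,g(d)) \subset I_1$; stepping leftward from $x \in I_0$ either lands in $I_0$ or in $I_1$ near $\tau^{-2}$. In particular, as soon as one step leaves $I_0$, the MAP terminates, so a MAP of length $\ell$ corresponds to a monotone walk of $\ell-1$ steps of size $g(d)$ inside $I_0$ in a single direction.

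For the upper bound, first and last points of such an orbit segment lie in the open interval $I_0$ of length $\tau^{-1}$, so $(\ell-1)g(d) < \tau^{-1}$, giving $\ell - 1 < \tau^{-1}/g(d)$ and hence $A(d) \leq \lfloor \tau^{-1}/g(d)\rfloor + 1$. For the matching lower bound, set $\ell = \lceil \tau^{-1}/g(d)\rceil$, so that $(\ell-1)g(d) < \tau^{-1} = |I_0|$. There is then an open subinterval $J \subset I_0$, of positive length $\tau^{-1} - (\ell-1)g(d)$, adjacent to whichever endpoint of $I_0$ is opposite the direction of stepping, such that any starting point in $J$ produces $\ell$ consecutive iterates all lying in $I_0$. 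By Weyl's equidistribution theorem, the orbit $(\{i\tau\})_{i \geq 1}$ is dense in $[0,1)$, so some $i$ satisfies $\{i\tau\} \in J$, producing a MAP of length $\ell$ and proving $A(d) \geq \lceil \tau^{-1}/g(d)\rceil$.

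The two expressions $\lfloor \tau^{-1}/g(d)\rfloor + 1$ and $\lceil \tau^{-1}/g(d)\rceil$ agree because $\tau^{-1}/g(d) = \tau^{-1}/\|d\tau\|$ is irrational (a short calculation using $\tau^2 = \tau+1$ shows it cannot be an integer), which closes the proof. The main conceptual obstacle in executing the plan is the careful justification that the step-size bound $g(d) < \tau^{-2}$ truly rules out wrap-around behaviour that could re-enter $I_0$ after leaving it; the case split between left-stepping and right-stepping must be handled, though both are entirely symmetric.
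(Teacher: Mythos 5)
Your proposal is correct and follows essentially the same route as the paper, which proves this theorem via the geometric discussion in the two paragraphs preceding the statement: MAPs correspond to monotone orbit segments of step $g(d)$ inside $I_0$, the hypothesis $g(d)<\tau^{-2}$ prevents a step from jumping over $I_1$, the span bound $(A(d)-1)g(d)<|I_0|\leq A(d)g(d)$ gives the formula, and density of $(\{i\tau\})$ supplies the matching lower bound. If anything, your write-up is slightly more careful than the paper's prose (which at one point writes $\ell g(d)<|I_0|$ where the span of $\ell$ points is really $(\ell-1)g(d)$), and your use of Lemma \ref{LEM:idnever1} to reduce to the symbol $\mathtt{0}$, together with the explicit non-integrality check for $\tau^{-1}/g(d)$, matches what the paper defers to the surrounding remarks.
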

Note that $|I_0|/g(d)$ is never an integer. Hence, the second equality in the above holds.

It is sometimes useful to consider the two ranges that $g(d)$ falls into instead as ranges that $\{d\tau\}$ falls into. Comparing the definitions of $g(d)$ and $\{d\tau\}$ then gives the following.
\begin{lem}\label{LEM:fractarith}
We have
\begin{align*}
g(d) \in [0, \tau^{-2}) &\quad \Longleftrightarrow \quad  \{ d\tau\} \in[0,\tau^{-2})\cup (\tau^{-1},1],\\
g(d) \in [\tau^{-2},0.5] &\quad \Longleftrightarrow \quad  \{ d\tau \} \in [\tau^{-2},\tau^{-1}].
\end{align*}
\end{lem}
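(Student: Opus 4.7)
The plan is to unfold the definition $g(d) = \min(\{d\tau\}, 1 - \{d\tau\})$ and run a straightforward case split on whether $\{d\tau\} \leq 1/2$ or $\{d\tau\} > 1/2$. The key preliminary observation is the identity $\tau^{-1} + \tau^{-2} = 1$ (which follows from $\tau^2 = \tau + 1$), giving the useful rewrites $1 - \tau^{-2} = \tau^{-1}$ and $1 - \tau^{-1} = \tau^{-2}$, together with the numerical ordering $0 < \tau^{-2} < 1/2 < \tau^{-1} < 1$. This ordering is what makes the case split align cleanly with the two subintervals on the right-hand side of the statement.

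For the first equivalence, assume $g(d) < \tau^{-2}$. If $\{d\tau\} \leq 1/2$, then $g(d) = \{d\tau\}$, placing $\{d\tau\}$ in $[0, \tau^{-2})$. If instead $\{d\tau\} > 1/2$, then $g(d) = 1 - \{d\tau\} < \tau^{-2}$ rearranges to $\{d\tau\} > 1 - \tau^{-2} = \tau^{-1}$, placing $\{d\tau\}$ in $(\tau^{-1}, 1]$. The converse is obtained by reading each implication backwards: $\{d\tau\} \in [0,\tau^{-2})$ forces $g(d) = \{d\tau\} < \tau^{-2}$, and $\{d\tau\} \in (\tau^{-1},1]$ forces $g(d) = 1 - \{d\tau\} < 1 - \tau^{-1} = \tau^{-2}$.

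The second equivalence follows by the same dichotomy applied to $g(d) \in [\tau^{-2}, 1/2]$: the subcase $\{d\tau\} \leq 1/2$ produces $\{d\tau\} \in [\tau^{-2}, 1/2]$, and the subcase $\{d\tau\} > 1/2$ gives $1 - \{d\tau\} \in [\tau^{-2}, 1/2]$, i.e.\ $\{d\tau\} \in [1/2, \tau^{-1}]$. The union of these two subintervals is exactly $[\tau^{-2}, \tau^{-1}]$, and the reverse direction again falls out of the same case split.

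There is no real obstacle here; the argument is essentially bookkeeping. The only mild subtlety is the treatment of boundary points, which is harmless: since $\tau$ is irrational, $\{d\tau\}$ is never equal to $0$ or $1/2$ for $d \geq 1$, so only the endpoint $\tau^{-2}$ (where both $g(d)$ and $\{d\tau\}$ can take values simultaneously in the two halves of the dichotomy) really needs care, and inspection of the above rewrites shows the interval endpoints match as stated.
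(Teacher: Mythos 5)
Your argument is correct and is precisely the routine verification the paper leaves implicit: the lemma is stated there as following immediately from ``comparing the definitions'' of $g(d)$ and $\{d\tau\}$, with no written proof, and your case split on $\{d\tau\}\lessgtr 1/2$ together with the identity $\tau^{-1}+\tau^{-2}=1$ is exactly that comparison carried out. The boundary discussion is also handled appropriately (irrationality of $\tau$ rules out $\{d\tau\}\in\{0,1/2\}$), so nothing is missing.
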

\begin{rem}\label{REM:I0gdnodiv}
It is easy to show that for all $d\geq 1$ the only value of $d$ for which $\{d\tau\}$ is a boundary point of any of the above intervals is when $d=1$, since $\{\tau\} = \tau^{-1}$. So, it is not so important if the intervals in the above ranges are open or closed. Similarly, one can show that $\{d\tau\}$ never divides into $\tau^{-1}$ a non-trivial integer number of times, which is useful to know when discussing $A(F_n/k)$ later.
\end{rem}
Before proving the analogue of Theorem \ref{THM:adformula} in the case $g(d) \in [\tau^{-2},0.5]$, we analyse an important family of differences that fall into the $g(d) \leq \tau^{-2}$ case; namely, we consider the case that $d$ is a Fibonacci number.
\begin{cor}\label{COR:ceiltauad}
    For all $n\geq2$, $g(F_n)=\tau^{-n}$ and $A(F_n)=\left\lceil\tau^{n-1}\right\rceil$.
\end{cor}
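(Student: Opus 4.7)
The proof naturally splits into first computing $g(F_n)$ and then applying Theorem \ref{THM:adformula}.

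First I would use identity \eqref{EQ:fibfraclim}, which states that $F_{n+1} - \tau F_n = (-1)^n/\tau^n$, to rewrite $\tau F_n = F_{n+1} - (-1)^n\tau^{-n}$. Splitting on the parity of $n$ and noting that $\tau^{-n} < 1$ for $n\ge 2$, one reads off
\[
\{F_n\tau\} = \begin{cases} 1 - \tau^{-n} & \text{if } n \text{ is even,}\\ \tau^{-n} & \text{if } n \text{ is odd.}\end{cases}
\]
In either case, $\min(\{F_n\tau\},\, 1 - \{F_n\tau\}) = \tau^{-n}$, so $g(F_n) = \tau^{-n}$, proving the first half of the corollary.

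For the second half, I would apply Theorem \ref{THM:adformula}. For $n\ge 3$ we have $g(F_n) = \tau^{-n} < \tau^{-2}$, so the theorem gives
\[
A(F_n) = \left\lceil \frac{\tau^{-1}}{g(F_n)} \right\rceil = \left\lceil \frac{\tau^{-1}}{\tau^{-n}} \right\rceil = \lceil \tau^{n-1} \rceil,
\]
as desired. The case $n=2$ must be handled separately, since $g(F_2) = g(1) = \tau^{-2}$ sits exactly at the boundary of the regime in Theorem \ref{THM:adformula} (cf.\ Remark \ref{REM:I0gdnodiv}). In this boundary case I would simply check by hand (or cite Table \ref{tab:fibdadid}) that $A(1)=2$, which agrees with $\lceil \tau \rceil = 2$.

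There is no serious obstacle; the only subtlety is the mild boundary issue at $n=2$, and making sure the parity computation of $\{F_n\tau\}$ is carried out carefully so that the sign of $(-1)^n/\tau^n$ is tracked correctly. Everything else is a direct substitution into the formula of Theorem \ref{THM:adformula}.
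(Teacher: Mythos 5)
Your proposal is correct and follows essentially the same route as the paper: use identity \eqref{EQ:fibfraclim} to compute $\{F_n\tau\}$ by parity, conclude $g(F_n)=\tau^{-n}$, and then apply Theorem \ref{THM:adformula}. Your separate treatment of the boundary case $n=2$ (where $g(F_2)=\tau^{-2}$ sits exactly on the edge of the hypothesis of Theorem \ref{THM:adformula}) is in fact slightly more careful than the paper's own proof, which applies the theorem without comment.
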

\begin{proof}
By Equation \ref{EQ:fibfraclim}, we have
\[
\{ F_n \tau\}=\left\{ F_{n+1}-\frac{(-1)^{n}}{\tau^{n}}\right\}.
\]
In particular, if $n$ is odd, then $ \{ F_{n}\tau\}=\tau^{-n}\in[0,\tau^{-2}]$, and if $n$ is even, then $\{ F_{n}\tau \}= \{ 1-\tau^{-n}\}=1-\tau^{-n}\in[\tau^{-1},1]$. So, by Lemma \ref{LEM:fractarith} and the definition of $g(d)$, we get 
\[
g(F_n)=\tau^{-n}.
\]
Finally, we use Theorem \ref{THM:adformula} to conclude that
\[
A(F_n)=\left\lceil\frac{\tau^{-1}}{g(F_n)}\right\rceil=\left\lceil\frac{\tau^{-1}}{\tau^{-n}}\right\rceil=\left\lceil\tau^{n-1}\right\rceil.
\]
\end{proof}
\begin{rem}
Notice that it is not always true that $A(d)=\lceil\tau^{n-1}\rceil$ implies $d=F_n$. For example, $A(6)=3$, where $3=\lceil\tau^{n-1}\rceil$ for $n=3$, but $6$ is not a Fibonacci number.
\end{rem}
\begin{rem}
    Using the theory of {\em model sets}, Aedo et al.\ provided an independent proof of Corollary \ref{COR:ceiltauad} in their unpublished work \cite{aedonote2022}. They gave the formulation $A(F_n)=F_{n-2}+\lceil\tau F_{n-1}\rceil$, which can be easily shown to be equivalent to ours by Equation \ref{aedoequi}.
\end{rem}

From Corollary \ref{COR:ceiltauad} and a simple application of Binet's formula, we obtain the following asymptotic result describing the slope shown in Figure \ref{FIG:tauplot}, which tracks the linear growth rate of $A(d)$ when $d$ is a Fibonacci number.
\begin{cor}
    \[\lim_{n\to \infty}\frac{A(F_n)}{F_n} = \lim_{n\to \infty}\frac{\left\lceil\tau^{n-1}\right\rceil}{F_n} = \sqrt{5}\tau^{-1}\approx 1.3819\ldots.\]
\end{cor}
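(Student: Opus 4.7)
The plan is to use the previous corollary immediately to dispose of the first equality: since $A(F_n)=\lceil\tau^{n-1}\rceil$ by Corollary \ref{COR:ceiltauad}, the limits $\lim A(F_n)/F_n$ and $\lim \lceil\tau^{n-1}\rceil/F_n$ agree term by term, so only the second equality requires work.

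For the second equality, I would substitute Binet's formula $F_n = (\tau^n - (-\tau)^{-n})/\sqrt{5}$ into the ratio. Writing
\[
\frac{\tau^{n-1}}{F_n} = \frac{\sqrt{5}\,\tau^{n-1}}{\tau^n - (-\tau)^{-n}} = \frac{\sqrt{5}\,\tau^{-1}}{1 - (-1)^n \tau^{-2n}},
\]
the denominator tends to $1$ as $n\to\infty$, so $\tau^{n-1}/F_n \to \sqrt{5}\,\tau^{-1}$. To pass from $\tau^{n-1}/F_n$ to $\lceil\tau^{n-1}\rceil/F_n$, I would bound the ceiling error by $1$, giving
\[
0 \leq \frac{\lceil\tau^{n-1}\rceil - \tau^{n-1}}{F_n} < \frac{1}{F_n},
\]
which vanishes since $F_n \to \infty$. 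Combining these two limits via the algebra of limits (or a simple squeeze) gives the result.

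There is no real obstacle here beyond bookkeeping: the only point to be mildly careful about is the sign of $(-1)^n\tau^{-2n}$ in the denominator, but since $\tau^{-2n}\to 0$ regardless of sign, the factor tends to $1$. No deeper property of the Fibonacci word or of MAPs is required beyond the exact formula already supplied by Corollary \ref{COR:ceiltauad}.
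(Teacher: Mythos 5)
Your proposal is correct and follows exactly the route the paper intends: the paper gives no written proof beyond the remark that the result follows ``from Corollary \ref{COR:ceiltauad} and a simple application of Binet's formula,'' which is precisely your argument. Your explicit handling of the ceiling error and of the sign of $(-1)^n\tau^{-2n}$ simply fills in the bookkeeping the paper leaves to the reader.
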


\subsection{Multiples of {F}ibonacci numbers}
The following lemma is a natural extension of what the step distance looks like for an integer multiple of $d$, and hence will prove useful when determining $A(nd)$.
\begin{lem}\label{LEM:ngofd}
We have \[g(nd)=\min(\{ ng(d)\},1-\{ ng(d)\}).\]
\end{lem}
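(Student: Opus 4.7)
The plan is to reduce the identity to a direct manipulation of fractional parts, exploiting the fact that $g(d) = \|d\tau\|$, the distance from $d\tau$ to the nearest integer. The key observation is that $g(nd) = \|n d\tau\| = \min(\{nd\tau\},1-\{nd\tau\})$, so it suffices to show that $\{nd\tau\}$ equals either $\{ng(d)\}$ or $1-\{ng(d)\}$, since taking $\min$ with its complement in $1$ kills this ambiguity.

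First I would split into two cases depending on whether $\{d\tau\} \leq \tfrac{1}{2}$ or $\{d\tau\} > \tfrac{1}{2}$ (noting that equality is impossible because $\tau$ is irrational). In the first case, $g(d) = \{d\tau\}$, so $d\tau = \lfloor d\tau \rfloor + g(d)$, and multiplying by $n$ and taking fractional parts gives $\{nd\tau\} = \{ng(d)\}$ because $n\lfloor d\tau\rfloor \in \Z$. In the second case, $g(d) = 1-\{d\tau\}$, so $d\tau = \lceil d\tau \rceil - g(d)$, and therefore $\{nd\tau\} = \{-ng(d)\}$. Since $\tau$ is irrational, $ng(d)$ is never an integer, so $\{-ng(d)\} = 1 - \{ng(d)\}$.

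In either case, once I have $\{nd\tau\} \in \{\{ng(d)\},\, 1-\{ng(d)\}\}$, the identity
\[
g(nd) = \min(\{nd\tau\},1-\{nd\tau\}) = \min(\{ng(d)\},1-\{ng(d)\})
\]
follows immediately, because the pair $\{\{ng(d)\},1-\{ng(d)\}\}$ is invariant under the involution $x \mapsto 1-x$ on $[0,1)$.

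I do not anticipate a real obstacle here; the only subtlety is the book-keeping around the identity $\{-x\} = 1-\{x\}$ for $x \notin \Z$, which requires the mild check that $ng(d)$ is irrational (immediate from $\tau \notin \Q$ and $d, n \geq 1$). The argument is essentially a one-line computation once the correct case split on the sign of the rotation step is made, mirroring the ``step left'' vs.\ ``step right'' dichotomy illustrated in Figure~\ref{FIG:stepdisEQ}.
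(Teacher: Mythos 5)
Your proposal is correct and follows essentially the same route as the paper's proof: a case split on whether $\{d\tau\}$ is below or above $\tfrac12$, reduction via $\{n\{x\}\}=\{nx\}$ and $\{-x\}=1-\{x\}$, and the observation that the pair $\{\{ng(d)\},1-\{ng(d)\}\}$ is stable under $x\mapsto 1-x$. Your explicit check that $ng(d)\notin\Z$ (needed for $\{-x\}=1-\{x\}$) is a small point the paper glosses over, but otherwise the arguments coincide.
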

\begin{proof}
First, it is well known that for $n\in\Z$ and $x\in\R$, 
\begin{align*}
\{ n\{ x\}\}&=\{ nx \}\qquad\text{and}\\
\{-x\}		&=1-\{x\}.
\end{align*}

Now, by definition of step distance from Equation \ref{DEF:stepdistance},
\begin{align*}
g(nd)&=\min(\{ nd\tau\}, 1-\{ nd\tau\})\\
     &=\min(\{ n\{ d\tau\}\}, 1-\{ n\{ d\tau\}\}).
\end{align*}
If $g(d)=\{ d\tau\}$, we have
\[g(nd)=\min(\{ ng(d)\}, 1-\{ ng(d)\});\]
and if $g(d)=1-\{ d\tau\}$, we have
\begin{align*}
g(nd)&=\min(\{ n(1-g(d))\}, 1-\{ n(1-g(d))\})\\
     &=\min(\{ -ng(d)\}, 1-\{ -ng(d)\})\\
     &=\min(1-\{ ng(d)\}, 1-(1-\{ ng(d)\}))\\
     &=\min(1-\{ ng(d)\}, \{ ng(d)\}).
\end{align*}
\end{proof}

The following proposition uses Theorem \ref{THM:adformula} and Corollary \ref{COR:ceiltauad} to generalise the formula for $A(F_n)$ to $A(kF_n)$ given a small enough multiplier $k$ and large enough index $n$.
\begin{prop}
Let $k\in\N$ and $N$ be such that $k\leq \tau^{N-2}$. For all $n\geq N$,
    \[A(kF_n)=\left\lceil\frac{A(F_n)}{k}\right\rceil=\left\lceil \frac{\tau^{n-1}}{k}\right\rceil.\]
\end{prop}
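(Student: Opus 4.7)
The plan is to reduce $A(kF_n)$ to an instance of Theorem \ref{THM:adformula} by computing the step distance $g(kF_n)$ explicitly, and then to compare the resulting formula with $\lceil A(F_n)/k\rceil$ using a standard ceiling identity.

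First, I would apply Lemma \ref{LEM:ngofd} together with Corollary \ref{COR:ceiltauad} to write
\[
g(kF_n) = \min\bigl(\{k g(F_n)\},\, 1-\{k g(F_n)\}\bigr) = \min\bigl(\{k\tau^{-n}\},\, 1-\{k\tau^{-n}\}\bigr).
\]
Next, I would use the hypotheses $k \leq \tau^{N-2}$ and $n \geq N$ to bound $k\tau^{-n} \leq \tau^{N-n-2} \leq \tau^{-2} < 1/2$. Consequently $\{k\tau^{-n}\} = k\tau^{-n}$ and $g(kF_n) = k\tau^{-n}$. Because $\tau^{N-2}$ is irrational whenever $N \geq 2$, no integer $k$ can attain equality in $k\tau^{-n} = \tau^{-2}$, so the bound is strict: $g(kF_n) < \tau^{-2}$. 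Hence we are in the regime where Theorem \ref{THM:adformula} applies.

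I would then plug this step distance directly into the formula of Theorem \ref{THM:adformula}:
\[
A(kF_n) = \left\lceil \frac{\tau^{-1}}{g(kF_n)} \right\rceil = \left\lceil \frac{\tau^{-1}}{k\tau^{-n}} \right\rceil = \left\lceil \frac{\tau^{n-1}}{k} \right\rceil,
\]
which gives the second equality of the statement. For the first equality, I would appeal to the elementary identity $\lceil \lceil x\rceil/k\rceil = \lceil x/k\rceil$, valid for any real $x$ and positive integer $k$, combined with Corollary \ref{COR:ceiltauad}, $A(F_n) = \lceil \tau^{n-1}\rceil$, to conclude
\[
\left\lceil \frac{A(F_n)}{k} \right\rceil = \left\lceil \frac{\lceil \tau^{n-1}\rceil}{k} \right\rceil = \left\lceil \frac{\tau^{n-1}}{k} \right\rceil.
\]

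The only step that requires any care is pinning down the regime of $g(kF_n)$: one must check that the hypothesis $k \leq \tau^{N-2}$ with $n \geq N$ is precisely what keeps $k\tau^{-n}$ in the interval $[0, \tau^{-2})$ so that neither the reduction modulo $1$ in Lemma \ref{LEM:ngofd} nor a switch to the second case of Theorem \ref{THM:adformula} is triggered. Everything else is a direct substitution and a standard ceiling manipulation.
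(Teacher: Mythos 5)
Your proof is correct and follows essentially the same route as the paper: compute $g(kF_n)=kg(F_n)=k\tau^{-n}$ via Lemma~\ref{LEM:ngofd}, verify this lies in the regime of Theorem~\ref{THM:adformula}, and substitute, with the paper leaving implicit the ceiling identity $\lceil\lceil x\rceil/k\rceil=\lceil x/k\rceil$ that you spell out. (One tiny quibble: $\tau^{N-2}$ is rational when $N=2$, so your strictness argument for $k\tau^{-n}<\tau^{-2}$ fails in the single case $n=2$, $k=1$; but that boundary case is exactly Corollary~\ref{COR:ceiltauad} and is covered by Remark~\ref{REM:I0gdnodiv}.)
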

\begin{proof}
As $k\leq \tau^{N-2}$, then $g(F_n)\leq \tau^{-2}/k$ for all $n \geq N$. We therefore also have $kg(F_n) = g(kF_n)$, as $kg(F_n) \leq 0.5$. So, $g(kF_n) \leq \tau^{-2}$ and we can apply Theorem \ref{THM:adformula}.

This gives
\[
A(kF_n) = \left\lceil\frac{\tau^{-1}}{g(kF_n)}\right\rceil = \left\lceil\frac{\tau^{-1}}{kg(F_n)}\right\rceil = \left\lceil\frac{\tau^{n-1}}{k}\right\rceil = \left\lceil\frac{A(F_n)}{k}\right\rceil.
\]
\end{proof}

We now classify $A(d)$ for the remaining case that $g(d) \in [\tau^{-2},0.5]$.
\begin{theorem}\label{THM:adformula2}
If $d \geq 2$ and $\tau^{-2} \leq g(d) \leq 0.5$, then
\[A(d)=2\left\lceil \frac{\tau^{-1}-g(d)}{g(2d)}\right\rceil.\]
\end{theorem}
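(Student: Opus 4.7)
The plan is to reduce the problem, via the second iterate $T_{2d}$, to the regime already handled by Theorem \ref{THM:adformula}. Although $g(d) \geq \tau^{-2}$ means a single step of the orbit can leap across $I_1$, the double-step displacement $g(2d) = 1 - 2g(d)$ lies strictly below $\tau^{-2}$ throughout this range, so composing two steps returns us to the small-step world. By reflecting the circle if necessary, I may assume WLOG that $\{d\tau\} = g(d)$, so the orbit moves rightward by $g(d)$.

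I would first classify the pairs $(x_n, x_{n+1})$ with both in $I_0$. A direct check shows there are exactly two possibilities:
\begin{itemize}
\item[(A)] $x_n \in [\tau^{-2},\,1-g(d))$ with $x_{n+1} = x_n + g(d) \in [\tau^{-2}+g(d),\,1)$;
\item[(B)] $x_n \in [1+\tau^{-2}-g(d),\,1)$ with $x_{n+1} = x_n + g(d) - 1 \in [\tau^{-2},\,g(d))$.
\end{itemize}
Using $d \geq 2$ together with Remark \ref{REM:I0gdnodiv} to get $g(d) > \tau^{-2}$ strictly, and $g(d) \leq 1/2$, a short case check shows a type A transition must be followed by type B and vice versa. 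So along any MAP the transition types strictly alternate, yielding just two scenarios AB and BA. Moreover, since $2g(d) > 1/2$, under $T_{2d}$ we have $x_{n+2} = x_n - g(2d)\bmod 1$: each sub-orbit (even- and odd-indexed) drifts leftward by $g(2d)$.

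In the AB scenario with $\ell = 2m$, collecting the constraints on $x_0, \ldots, x_{2m-1}$ in $I_0$ and using the identity $g(d) + g(2d) = 1 - g(d)$, all upper bounds on $x_0$ collapse to $1 - g(d)$, while the tightest lower bound (coming from $x_{2m-2} \geq \tau^{-2}$) becomes $x_0 \geq \tau^{-2} + (m-1)g(2d)$. Hence $x_0$ can be chosen in a non-empty interval precisely when
\[
(m-1)\,g(2d) < \tau^{-1} - g(d), \qquad \text{i.e., } \quad m \leq \bigl\lceil (\tau^{-1}-g(d))/g(2d) \bigr\rceil.
\]
The odd-length case loses exactly one point in this bound, and the mirror scenario BA is limited instead by $(m-1)g(2d) < g(d) - \tau^{-2}$, which is strictly tighter because $g(d) \leq 1/2$ forces $g(d) - \tau^{-2} \leq \tau^{-1} - g(d)$.

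Thus $A(d) \leq 2\lceil(\tau^{-1}-g(d))/g(2d)\rceil$, and by Weyl's theorem $\{i\tau\}$ meets the non-empty open range identified above for some $i$, giving equality. The main technical hurdle I anticipate is the bookkeeping: verifying the strict alternation of types A and B, and identifying within the AB scenario that it is the even-indexed sub-orbit (rather than the odd-indexed one) that furnishes the binding constraint. The helpful telescoping $g(d) + g(2d) = 1 - g(d)$ is the numerical identity that makes the upper bounds degenerate and keeps the analysis clean.
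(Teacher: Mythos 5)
Your proposal is correct and rests on the same mechanism as the paper's proof: the double-step drift $x_{n+2}=x_n-g(2d)$ with $g(2d)=1-2g(d)<\tau^{-2}$ produces two interleaved clusters in $I_0$, and counting how many drifts of size $g(2d)$ fit into $\tau^{-1}-g(d)$ (together with the non-integrality of that ratio) yields $A(d)=2\lceil(\tau^{-1}-g(d))/g(2d)\rceil$. Your A/B transition classification with strict alternation, the explicit comparison showing the BA phase is dominated by AB because $g(d)-\tau^{-2}\leq\tau^{-1}-g(d)$, and the exclusion of odd maximal lengths make the upper bound more explicit than the paper's brief translation-of-orbit argument, but the route is essentially the same.
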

\begin{proof}
Without loss of generality, we may assume that $\{d\tau\}<0.5$ so that we are jumping to the right. The case $\{d\tau\}>0.5$ is completely analogous.
Choose $i$ such that $1-g(d)-g(2d)<\{i\tau\} < 1-g(d)$.
Then we have $\{ i\tau\}+g(d)<1$ and $\{ i\tau\}-kg(2d)\geq \tau^{-2}$ for some $k$ and by construction, no other starting interval satisfies these inequalities for a larger value of $k$. That is, $k$ is maximal with this property. The first inequality means that the next point $\{ i\tau\}+g(d)=\{(i+d)\tau\}$ lands just before the point $1$, and so remains in $I_0$. The next point $\{ (i+2d)\tau\}$ avoids falling in $I_1$ because $g(d) >\tau^{-2}$ and $\{(i+d)\tau\}$ is within some small enough $\epsilon$ of the point $1$. See Figure \ref{FIG:stepdishopoverI1}.

\begin{figure}[ht]
\centering
\begin{tikzpicture}[scale=12]
    \draw (0,0)-- (0.382,0);
    \draw (0.382,0)-- (1,0);
    \foreach \x in {0,1} {
        \draw (\x,0.02) -- (\x,-0.02) node[below] {};
        \draw (0.382,0.02) -- (0.382,-0.02) node[below] {$\tau^{-2}$};
        \draw (0.52,0.01) -- (0.52,-0.01);
        \draw[dotted,thick] (0.52,0.01) -- (0.52,0.06) node[above] {${\scriptstyle 1-g(d)}$};
        \draw[dotted,thick] (0.50,0) -- (0.50,-0.03) node[below] {$\{i\tau\}$};
        \draw[dotted,thick] (0.98,0) -- (0.98,-0.03) node[below] {$\{(i+d)\tau\}$};
    }
    
    \draw[-{Stealth[width=2.5mm]}]  (0.50, 0) .. controls (0.54, 0.07) and (0.94, 0.07) .. (0.98, 0);
    \filldraw[black] (0.50,0) circle (0.1pt);
    \filldraw[black] (0.98,0) circle (0.1pt);
    
    \filldraw[black] (0.455,0) circle (0.1pt);
    \filldraw[black] (0.41,0) circle (0.1pt);
    \node at (0.365,0) {$\times $};

    \draw[|<->|] (0.455, 0.015) -- (0.50, 0.015) node[midway, above] {${\scriptstyle g(2d)}$};    

    \filldraw[black] (0.935,0) circle (0.1pt);
    \filldraw[black] (0.89,0) circle (0.1pt);

    \draw[-]  (0.98, 0) .. controls (0.983, 0.015) .. (1, 0.025);
    \draw[-,dotted,thick]  (1, 0.025) .. controls (1.01, 0.030) .. (1.05, 0.038);
    \draw[-,dotted,thick]  (-0.02, 0) .. controls (-0.017, 0.015) .. (0, 0.025);
    \draw[-{Stealth[width=2.5mm]}]  (0, 0.025) .. controls (0.08, 0.075) and (0.43, 0.055) .. (0.455,0);
\end{tikzpicture}
\caption{Two clusters of points in the orbit segment form when $g(d)>\tau^{-2}$ due to the ability for consecutive points to `jump over' the shorter interval $I_1$.}
\label{FIG:stepdishopoverI1}
\end{figure}

Now, by Lemma \ref{LEM:ngofd}, we know that every $(n+2)$-th point falls $g(2d)$ to the left of the $n$-th point. This continues until a point $\{(i+2kd)\tau\} = \{i\tau\}-2kg(2d)$ falls within a distance $g(2d)$ to the right of $\tau^{-2}$. Then, the next point $\{ (i+(2k+1)d)\tau\}$ is the last consecutive point that appears in $I_0$ because we necessarily have $\{ (i+(2k+2)d)\tau\}<\tau^{-2}$ by the maximality of $k$. 

So, the points in the orbit segment fall into one of two equal-sized clusters of points that appear towards either end of the interval $I_0$ (see, e.g., Figure \ref{FIG:d148ex}), with each cluster containing exactly $k+1$ points. It follows that $A(d)\geq 2(k+1)$. Since we have
\[
k\leq \frac{\{ i\tau\} -\tau^{-2}}{g(2d)}\leq \frac{1-g(d) -\tau^{-2}}{g(2d)}=\frac{\tau^{-1}-g(d)}{g(2d)},
\]
then for $k$ to be maximal, we have
\[
k = \left\lfloor \frac{\tau^{-1}-g(d)}{g(2d)} \right\rfloor,
\]
which gives
\[
A(d) \geq 2\left(\left\lfloor \frac{\tau^{-1}-g(d)}{g(2d)} \right\rfloor +1\right) = 2\left\lceil \frac{\tau^{-1}-g(d)}{g(2d)} \right\rceil
\]
because $g(2d)$ never divides $\tau^{-1}-g(d)$ an integer number of times for $d \geq 2$. This can be seen by observing that $\tau^{-1}-g(d)=\tau^{-1}-0.5+\frac{1}{2}g(2d)$ together with the fact that $g(2d)\in \mathbb{Z}[\tau]$.

In order to see that in fact this is the maximal value of $A(d)$, we notice that any other starting point gives rise to an orbit segment that either satisfies the same conditions as $\{i\tau\}$ (so gives rise to a MAP of the same length), or else must enter $I_1$ earlier because the orbit segment will be a translate of the orbit segment starting at position $\{i\tau\}$ and $k$ was chosen maximally.
\end{proof}

So, Theorems \ref{THM:adformula} and \ref{THM:adformula2} together allow us to determine $A(d)$ for any $d\geq 1$.

Note that if $g(d) \in [\tau^{-2},0.5]$, then $g(2d)=1-2g(d)$, so $A(d)$ can also be written as either
\[
A(d) = 2\left\lceil \frac{\tau^{-1}-g(d)}{1-2g(d)}\right\rceil\quad \text{or}\quad A(d)=2\left\lceil \frac{\tau^{-1}-0.5+\frac{1}{2}g(2d)}{g(2d)}\right\rceil,
\]
so one only needs to compute either $g(d)$ or $g(2d)$, not both.
\begin{example}
Given $d=148$, we get $A(148)=6$ and Figure \ref{FIG:d148ex} shows the distribution of points starting at $i=17$. The step distance between the adjacent points in each cluster is $g(296)$.

\begin{figure}[ht]
\centering
\begin{tikzpicture}[scale=14.8]
    \draw (0,0)-- (0.382,0);
    \draw (0.382,0)-- (1,0);
    \foreach \x in {0,1} {
        \draw (\x,0.02) node[above] {\x}-- (\x,-0.02);
        \draw (0.382,0.02) -- (0.382,-0.02);
    }
    \filldraw[black] (0.507,0) circle (0.1pt);
    \node[below] at (0.507,-0.02) {\small $0$};
    \filldraw[black] (0.976,0) circle (0.1pt);
    \node[below] at (0.976,-0.02) {\small $1$};
    \filldraw[black] (0.445,0) circle (0.1pt);
    \node[below] at (0.445,-0.02) {\small $2$};
    \filldraw[black] (0.914,0) circle (0.1pt);
    \node[below] at (0.914,-0.02) {\small $3$};
    \filldraw[black] (0.387,0) circle (0.1pt);
    \node[below] at (0.387,-0.02) {\small $4$};
    \filldraw[black] (0.852,0) circle (0.1pt);
    \node[below] at (0.852,-0.02) {\small $5$};
    \filldraw[black] (0.321,0) circle (0.1pt);
    \node[below] at (0.321,-0.02) {\small $6$};
    \filldraw[black] (0.038,0) circle (0.1pt);
    \node[below] at (0.038,-0.02) {\small $-1$};
    \draw[decorate,decoration={brace,amplitude=20pt}] (0,0) -- (0.382,0) node[midway,above,yshift=20pt] {$\tau^{-2}$};
    \draw[decorate,decoration={brace,amplitude=20pt}] (0.382,0) -- (1,0) node[midway,above,yshift=20pt] {$\tau^{-1}$};
\end{tikzpicture}
\caption{Counting the repetition of $\mathtt{0}$ to find $A(148)$; the points $\{ (i+nd)\tau\}$ are denoted only by $n$ for convenience.}
\label{FIG:d148ex}
\end{figure}
\end{example}

It is apparent from Figure \ref{FIG:tauplot} that the values $A(F_n)$ achieve new maxima of $A(d)$ over all $1\leq d\leq F_n$.
To prove this, we employ a version of the three-gap theorem in the specific case of the rotation being given by $\tau$. A suitable reference for this special case appears in work of Ravenstein \cite[Section 3]{ravenstein1989}, which we provide here. 
\begin{theorem}[\cite{ravenstein1989}]\label{THM:3gap}
The set of points $P_d = \{\{ i\tau \}\mid1\leq i\leq d\}$ partition the circle $[0,1]/\{0,1\}$ into intervals taking at most $3$ distinct lengths. If $F_{n-1}\leq d<F_n$, then the smallest length is $|F_{n-1}\tau-F_n|$ and $\{ F_{n-1}\tau\}$ is the closest point in $P_d$ to the boundary of the interval $[0,1]$.
\end{theorem}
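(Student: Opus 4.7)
The plan is to prove the theorem in two stages: first, establish the three-distance property for the specific rotation by $\tau$, and then identify the smallest gap length and the boundary-adjacent orbit point explicitly in terms of Fibonacci numbers. For the three-distance claim, I would invoke (or reprove by a short induction on $d$) the classical three-distance theorem of S\'os, \'Swierczkowski, and Sur\'anyi, which holds for any irrational rotation. The inductive step observes that when $\{(d+1)\tau\}$ is adjoined to $P_d$, the new point must land inside one of the currently-longest gaps and splits it into two subgaps whose lengths are among the other existing gap lengths, so that at most three distinct lengths are maintained throughout.

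For the specialization to $\tau$, the key ingredient is Equation \eqref{EQ:fibfraclim}, which gives $|F_n\tau - F_{n+1}| = \tau^{-n}$ for every $n \geq 1$. This identity tells us that $\{F_n\tau\}$ lies at distance exactly $\tau^{-n}$ from the point $0$ on the circle $[0,1]/\{0,1\}$; more precisely, it sits just to the right of $0$ when $n$ is odd and just to the left of $0$ (equivalently, just below $1$) when $n$ is even. Coupled with the fact that the convergents of the continued fraction expansion $\tau = [1;1,1,\ldots]$ are exactly the ratios $F_{n+1}/F_n$, so that these are the \emph{best} rational approximations of $\tau$, one obtains that for every $1 \leq i \leq d < F_n$ we have $\|i\tau\| \geq \|F_{n-1}\tau\| = \tau^{-(n-1)}$. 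Hence $\{F_{n-1}\tau\}$ is the point of $P_d$ closest to the boundary, and the gap adjacent to the boundary on the appropriate side has length precisely $|F_{n-1}\tau - F_n| = \tau^{-(n-1)}$.

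It then remains to verify that this boundary-adjacent gap is in fact the \emph{smallest} of the (at most three) distinct gap lengths. I would do this by tracking the three-gap structure inductively across the thresholds $d = F_k$: between these Fibonacci thresholds, the three admissible gap lengths are $\tau^{-(n-1)}$, $\tau^{-(n-2)}$, and their sum $\tau^{-(n-1)} + \tau^{-(n-2)} = \tau^{-(n-3)}$ (subsets of these lengths appear depending on where $d$ sits in the interval $[F_{n-1}, F_n)$). The smallest among these is always $\tau^{-(n-1)}$, matching the boundary-adjacent length computed above.

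The main obstacle will not be the mathematics, which is classical and short, but rather the bookkeeping: one must carefully handle the parity alternation governing whether $\{F_{n-1}\tau\}$ lies just to the right of $0$ or just to the left of $1$, and match each of the three Fibonacci-power gap lengths to the correct geometric gap in $P_d$. Since this is a restatement of Ravenstein's result, the value added is a clean, self-contained derivation identifying the smallest length in the form $|F_{n-1}\tau - F_n|$ that can then be fed directly into Theorem \ref{THM:adformula} and the analyses that follow.
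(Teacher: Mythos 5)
The paper offers no proof of this statement---it is quoted directly from van Ravenstein's article---so there is nothing internal to compare against; your outline is correct and is essentially the standard derivation that the cited source carries out. The two pillars are exactly right: the classical three-distance theorem bounds the number of gap lengths, and the identification of the extremal gap rests on Equation \ref{EQ:fibfraclim} together with the best-approximation property of the convergents $F_n/F_{n-1}$ of $\tau$, which gives $\|i\tau\|\geq\|F_{n-1}\tau\|=\tau^{-(n-1)}$ for all $1\leq i<F_n$, with equality only at $i=F_{n-1}$. Two points are worth making explicit. First, as literally stated $P_d$ does not contain the point $0\equiv 1$, and the ``smallest length'' claim is only true once that boundary point is adjoined as a partition point: for $d=2$ the two arcs determined by $P_2=\{0.236\ldots,\,0.618\ldots\}$ alone have lengths $\tau^{-2}$ and $\tau^{-1}$, whereas $|F_3\tau-F_4|=\tau^{-3}$. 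Your argument implicitly (and correctly) works with $P_d\cup\{0\}$, which is also the form in which the paper actually uses the theorem. Second, your final step---tracking the three admissible lengths $\tau^{-(n-1)},\tau^{-(n-2)},\tau^{-(n-3)}$ inductively across the Fibonacci thresholds---is correct but can be short-circuited: every gap is an arc between two points $\{i\tau\}$ and $\{j\tau\}$ with $0\leq i,j\leq d$, so its length is at least the circle distance $\|(i-j)\tau\|\geq\|F_{n-1}\tau\|$ because $1\leq|i-j|<F_n$; combined with the boundary-adjacent gap of exactly that length which you exhibit, minimality follows with no induction and no bookkeeping of the parity alternation.
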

\begin{cor}\label{COR:newmaximafn}
    For every $n$, a new maximum of $A(d)$ over all $1 \leq d \leq F_n$ is achieved at $d=F_n$.
\end{cor}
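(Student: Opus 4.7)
The plan is to combine Theorem \ref{THM:3gap} with the closed-form expressions for $A(d)$ in Theorems \ref{THM:adformula} and \ref{THM:adformula2}, and then to compare the resulting upper bound against $A(F_n) = \lceil \tau^{n-1}\rceil$ from Corollary \ref{COR:ceiltauad}. The first step is to bound the step distances from below. Given $1\leq d < F_n$, one picks $k$ with $F_{k-1}\leq d < F_k\leq F_n$, so that Theorem \ref{THM:3gap} identifies $\{F_{k-1}\tau\}$ as the point of $P_d$ closest to the boundary of $[0,1]$, at distance $|F_{k-1}\tau - F_k| = \tau^{-(k-1)}$. Since this circular distance coincides with $g(d)$, one obtains $g(d) \geq \tau^{-(k-1)} \geq \tau^{-(n-1)}$. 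Applying the same argument to $2d$---which satisfies $2d < 2F_n < F_{n+2}$ because $F_{n+2}-2F_n = F_{n-1}>0$ for $n\geq 2$---yields $g(2d) \geq \tau^{-(n+1)}$.

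Next I would split into two regimes according to which of Theorems \ref{THM:adformula} and \ref{THM:adformula2} applies. If $g(d) \leq \tau^{-2}$, Theorem \ref{THM:adformula} directly gives $A(d) = \lceil \tau^{-1}/g(d)\rceil \leq \lceil \tau^{n-2}\rceil$. If instead $g(d) > \tau^{-2}$, then $g(2d) = 1-2g(d)$, and the golden-ratio identity $2\tau^{-1}-1 = \tau^{-3}$ allows one to rewrite
\[
\frac{\tau^{-1}-g(d)}{g(2d)} \;=\; \frac{1}{2} + \frac{\tau^{-3}}{2\,g(2d)}.
\]
Together with $g(2d)\geq \tau^{-(n+1)}$, Theorem \ref{THM:adformula2} then yields $A(d) \leq 2\lceil (1+\tau^{n-2})/2\rceil < \tau^{n-2}+3$.

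To conclude, I would compare with $A(F_n) = \lceil \tau^{n-1}\rceil$ using $\tau^{n-1}-\tau^{n-2} = \tau^{n-3}$. For $n\geq 6$, $\tau^{n-3}\geq \tau^3 > 3$, so $\tau^{n-1} > \tau^{n-2}+3 > A(d)$, which (using that $\tau^{n-1}$ is irrational) gives $A(d) \leq \lfloor \tau^{n-1}\rfloor < \lceil \tau^{n-1}\rceil = A(F_n)$. The remaining cases $n\leq 5$ can be verified directly from Table \ref{tab:fibdadid}. I expect the main obstacle to be the second regime: the naive bound $\tau^{-1}-g(d) \leq \tau^{-3}$ loses a factor of roughly two and fails to beat $A(F_n)$, so isolating the contribution $g(2d)/2$ via the identity above is essential; the extremal case $d = F_{n+1}/2$ (for example $d=72=F_{12}/2$ with $A(72)=78$) shows that this refined bound is tight up to constants.
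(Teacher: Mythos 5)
Your proof is correct and follows essentially the same route as the paper's: the same case split according to whether $g(d)\leq\tau^{-2}$, the same appeal to the three-gap theorem to bound $g(d)$ and $g(2d)$ from below, and the same rewriting of the formula in Theorem \ref{THM:adformula2} via the identity $2\tau^{-1}-1=\tau^{-3}$, followed by a comparison with $A(F_n)=\lceil\tau^{n-1}\rceil$ and a finite check of small $n$. Your explicit bound $g(d)\geq\tau^{-(n-1)}$ in the first case is in fact a slight sharpening of the paper's observation $g(d)>g(F_n)$ (which by itself only yields $A(d)\leq A(F_n)$ rather than the strict inequality a ``new maximum'' requires), but this is a minor refinement of the same argument.
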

\begin{proof}
We are required to show that if $d < F_n$, then $A(d) < A(F_n)$.
We first consider the case that $g(d)<\tau^{-2}$.
By Theorem \ref{THM:3gap}, we know that $g(F_n) < g(d)$ as $g(d)$ is exactly the distance of $\{d\tau\}$ from the boundary of $[0,1]$. Then, by Theorem \ref{THM:adformula} we find that $A(d) \leq A(F_n)$ as required.

In the case that $g(d) \in [\tau^{-2},0.5]$, we may assume that $\{d\tau\}<0.5$ as the other case is analogous. Then, $g(d) = 0.5-\frac{1}{2}g(2d)$ and so Theorem \ref{THM:adformula2} gives us
\[
A(d) = 2\left\lceil\frac{\tau^{-1}-0.5+\frac{1}{2}g(2d)}{g(2d)}\right\rceil = 2\left\lceil\frac{\tau^{-1}-0.5}{g(2d)}+\frac{1}{2}\right\rceil.
\]
As $d<F_n$, then $2d<F_{n+2}$ meaning that $g(2d) \geq g(F_{n+1}) = \tau^{-(n+1)}$ by Theorem \ref{THM:3gap}, which gives
\begin{align*}
A(d) \leq 2\left\lceil(\tau^{-1}-0.5)\tau^{n+1}+\frac{1}{2}\right\rceil &\leq (2\tau^{-1}-1)\tau^{n+1}+2\\
	& = \tau^{-3}\tau^{n+1}+2\\
	& = \tau^{n-2}+2.
\end{align*}
For all $n \geq 5$, $\tau^{n-2}+2<\tau^{n-1} < A(F_n)$.
For $n < 5$ (that is, $d < 8$), one checks by hand, for instance in Table \ref{tab:fibdadid}.
\end{proof}

\subsection{The {P}isano period, rank of apparition and factors of {F}ibonacci numbers}
For $k\geq 1$, let $\pi(k)$ denote the \emph{Pisano period} of $k$, the minimal period of the sequence $P_k = (F_i \pmod k)_{i \geq 0}$. It is known that for all $k \geq 1$, $P_k$ is periodic, and so $\pi(k)$ is well-defined and finite \cite{vajda1989}. A related quantity, the \emph{rank of apparition} of $k$, or \emph{rank} for short, is defined to be the smallest $n \geq 1$ such that $F_n\equiv 0 \pmod{k}$, that is, the smallest index $n\geq 1$ for which $P_n=0$. We let $\alpha(k)$ denote the rank of $k$. Note that $\alpha(k)$ is well-defined and finite. It is also known that $F_n \equiv 0 \pmod{k}$ if and only if $\alpha(k)|n$.

It is therefore of interest, for some fixed $k\geq 2$, to study $A(d)$ for the family of values $d=F_{n\alpha(k)}/k$. Of particular note is the fact that every natural number is of this form. That is, every natural number divides a Fibonacci number. In this section, we show that for a fixed $k\geq 3$, $A(F_n/k)$ takes only finitely many values over all $n$ for which $k|F_n$.

\subsubsection{The case of $k = 2$}
We first address the case of $k=2$, i.e. $d=F_n/2$, as this case differs from $k \geq 3$. This case also happens to give the newest peak values of $A(d)$ when $\tau^{-2}<g(d)<0.5$. Keep in mind that $\alpha(2)=3$, and so a Fibonacci number is even if and only if the index $n$ is a multiple of $3$.

Note that for any real number $x$, $\{x/2\}$ must either be $\{x\}/2$ or $\{x\}/2+0.5$. Hence, $\{F_n\tau/2\}$ must be $\{F_n \tau\}/2+0.5$, as Theorem \ref{THM:3gap} precludes $\{F_n\tau/2\}$ from being smaller than $\{F_n\tau\}$. Therefore, for $n \geq 3$,
\[
g(F_n/2) = 1-\{F_n\tau/2\} = 1-(\{F_n\tau\}/2+0.5)=0.5-g(F_n)/2.
\]
It follows that $g(F_n/2) \to 0.5$ as $n \to \infty$.

From the formulation in Theorem \ref{THM:adformula2}, we get the following.
\begin{cor}\label{COR:A(fn/2)}
    For $d=F_n/2 \geq 2$ an integer, 
    \[A(F_n/2)=2\left\lceil\frac{1}{2}\tau^{n-3}+1/2\right\rceil = \lceil \tau^{n-3}\rceil + 1 +\left(\frac{n}{3}\mod 2\right).\]
\end{cor}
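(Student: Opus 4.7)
The plan is to apply Theorem \ref{THM:adformula2} directly, with $d=F_n/2$. The ingredients are already laid out in the paragraph preceding the corollary: one has $g(F_n/2)=1/2-g(F_n)/2=1/2-\tau^{-n}/2$, and (trivially) $g(2d)=g(F_n)=\tau^{-n}$ by Corollary \ref{COR:ceiltauad}. The range condition $g(F_n/2)\in[\tau^{-2},1/2]$ required by Theorem \ref{THM:adformula2} will hold automatically once $n\ge 6$, which is exactly when $d=F_n/2\ge 2$; the smallest case $n=6$ (i.e.\ $d=4$) can be read off from Table \ref{tab:fibdadid} if desired. Feeding this into Theorem \ref{THM:adformula2} gives
\[
A(F_n/2)=2\left\lceil\frac{\tau^{-1}-(1/2-\tau^{-n}/2)}{\tau^{-n}}\right\rceil=2\left\lceil(\tau^{-1}-1/2)\,\tau^n+1/2\right\rceil.
\]

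For the first equality of the corollary I would then use the elementary identity $\tau^{-1}-1/2=\tau^{-3}/2$, which follows immediately from $\tau^{-1}=\tau-1$ and $\tau^{-3}=2\tau-3$ (both consequences of $\tau^2=\tau+1$). Substituting collapses the expression to $2\lceil \tfrac{1}{2}\tau^{n-3}+\tfrac{1}{2}\rceil$, exactly as claimed.

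The second equality is a parity bookkeeping step. For this I would invoke the Lucas-number identity $\tau^k=L_k-(-1)^k\tau^{-k}$, combined with the fact that $L_k$ is even iff $3\mid k$ (a short induction on the Lucas recurrence). Since $F_n$ is even iff $3\mid n$, we are always in the situation $3\mid(n-3)$, so $L_{n-3}$ is even. A clean split on the parity of $n/3$ then determines the sign of $(-1)^{n-3}\tau^{-(n-3)}$, which pins down both $\lceil\tau^{n-3}\rceil$ (either $L_{n-3}$ or $L_{n-3}+1$) and the ceiling $\lceil\tfrac{1}{2}\tau^{n-3}+\tfrac{1}{2}\rceil$; in both cases twice the latter simplifies to $L_{n-3}+2$, which matches $\lceil\tau^{n-3}\rceil+1+(n/3\bmod 2)$.

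The main obstacle is not the computation but getting the case analysis in the last step to collapse into a single closed form. A direct algebraic manipulation of $2\lceil x/2+1/2\rceil$ in terms of $\lceil x\rceil$ fails in general, so one genuinely needs to use that $\{\tau^{n-3}\}$ is either very close to $0$ or very close to $1$, together with the parity of $L_{n-3}$, to force both the ``above'' and ``below'' cases to land on the same value $L_{n-3}+2$. Once that symmetry is spotted, writing it up is straightforward.
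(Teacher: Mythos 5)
Your proposal is correct and follows essentially the same route as the paper: apply Theorem \ref{THM:adformula2} with $g(F_n/2)=1/2-g(F_n)/2$ and $g(2d)=g(F_n)=\tau^{-n}$, simplify via $\tau^{-1}-1/2=\tau^{-3}/2$, and then resolve the ceiling using the sign of $\tau^{n-3}-L_{n-3}$ together with the evenness of $L_{n-3}$. Your parity bookkeeping in the last step is in fact more detailed than the paper's one-line justification, but it is the same underlying fact.
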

\begin{proof}
The first value of $n$ for which $F_n/2 \geq 2$ is an integer is $n=6$, giving $g(F_6/2) \in [\tau^{-2},0.5]$ and so Theorem \ref{THM:adformula2} applies. For all greater $n$, as $g(F_n)$ is monotonically decreasing, then $g(F_n/2) = 0.5 - g(F_n)/2 \in [\tau^{-2},0.5]$ also. So, we get
\[A(F_n/2)=2\left\lceil\frac{\tau^{-1}-g(F_n/2)}{g(F_n)}\right\rceil =2\left\lceil\frac{\tau^{-1}-0.5+g(F_n)/2}{g(F_n)}\right\rceil =2\left\lceil\frac{1}{2}\tau^{n-3}+1/2\right\rceil.\]
The final equality follows from the fact that $\{\tau^{2n}\}>0.5$ and $\{\tau^{2n+1}\}<0.5$ for all $n \geq 2$.
\end{proof}
Note that this formula simplifies to $\lceil \tau^{n-3}\rceil +1 $ if $n \equiv 0\pmod 6$ and simplifies to $\lceil \tau^{n-3}\rceil +2 $ if $n \equiv 3\pmod 6$.

By applying Binet's formula, the above result gives an asymptotic for the slope of the line passing through the peaks of $A(d)$ for $d = F_n/2$. We get
\begin{cor}\label{COR:Afn/2bound}
\[\lim_{n\to\infty}\frac{A(F_n/2)}{F_n/2}= \lim_{n\to\infty} 2\left\lceil\frac{1}{2}\tau^{n-3}+1/2\right\rceil\frac{2}{F_n} = 2\sqrt{5}\tau^{-3}\approx 1.0557\ldots .\qed\]
\end{cor}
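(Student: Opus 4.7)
The plan is to chain together Corollary \ref{COR:A(fn/2)}, which gives the exact formula $A(F_n/2) = 2\left\lceil\tfrac{1}{2}\tau^{n-3} + \tfrac{1}{2}\right\rceil$ whenever $F_n/2 \geq 2$ is an integer (i.e.\ $n \geq 6$ with $3 \mid n$), with Binet's formula $F_n = \frac{1}{\sqrt{5}}\bigl(\tau^n - (-\tau)^{-n}\bigr)$. The first equality in the statement is just Corollary \ref{COR:A(fn/2)} substituted into $A(F_n/2)/(F_n/2) = 2A(F_n/2)/F_n$, so only the second equality requires work.

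First I would deal with the ceiling. Since for any real $x$ we have $\lceil x\rceil = x + O(1)$, it follows that
\[
2\left\lceil\tfrac{1}{2}\tau^{n-3} + \tfrac{1}{2}\right\rceil = \tau^{n-3} + O(1)
\]
as $n\to\infty$ along multiples of $3$. Then I would use Binet to write
\[
\frac{2}{F_n} = \frac{2\sqrt{5}}{\tau^n - (-\tau)^{-n}} = \frac{2\sqrt{5}}{\tau^n}\cdot\frac{1}{1 - (-1)^n\tau^{-2n}},
\]
which shows that $2\tau^n/F_n \to 2\sqrt{5}$ as $n \to \infty$.

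Multiplying these two estimates gives
\[
\frac{A(F_n/2)}{F_n/2} = \bigl(\tau^{n-3}+O(1)\bigr)\cdot\frac{2}{F_n} = \tau^{-3}\cdot\frac{2\tau^n}{F_n} + O(1/F_n) \longrightarrow 2\sqrt{5}\,\tau^{-3},
\]
which is the claimed limit. The $O(1/F_n)$ error term from the ceiling vanishes because $F_n$ grows exponentially, so the limit is genuinely controlled by the leading-order behaviour of $\tau^{n-3}/F_n$. There is no real obstacle here; the only thing to be slightly careful about is that the limit is taken along the subsequence $n \equiv 0 \pmod{3}$ on which $F_n/2$ is an integer, but this is implicit in the statement since $A(F_n/2)$ is only defined for such $n$, and the argument above goes through verbatim along that subsequence.
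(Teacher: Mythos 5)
Your proposal is correct and follows exactly the route the paper intends: substitute the exact formula from Corollary \ref{COR:A(fn/2)}, absorb the ceiling into an $O(1)$ term, and apply Binet's formula to conclude $2\tau^{n-3}/F_n \to 2\sqrt{5}\tau^{-3}$. The paper leaves this as an immediate consequence (the statement carries its own \qed), and your handling of the subsequence $n \equiv 0 \pmod 3$ is the only point requiring care, which you address correctly.
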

Using an analysis similar to that in the proof of Corollary \ref{COR:newmaximafn}, one can show that for all $d<F_n/2$ such that $g(d) \in [\tau^{-2},0.5]$, $A(d)$ is bounded above by $A(F_n/2)$.
\subsubsection{The case of $k \geq 3$}
Notice that in the case of $k=2$, we applied Theorem \ref{THM:adformula2} because $g(d) \in [\tau^{-2},0.5]$. This essentially amounts to the observation that $\{F_n\tau/2\} \to 0.5$ as $n \to \infty$. In the case that $k \geq 3$, it would therefore be useful if we could show that $g(F_n/k)\to 1/k$. Unfortunately this is not always the case. For example, $g(F_n/5)$ contains two convergent subsequences; one that converges to $1/5$ and one that converges to $2/5$. In general, we can only guarantee that the sequence $g(F_n/k)$ admits a finite set of cluster points that is a subset of $\{i/k \mid 0 \leq i \leq \lfloor k/2\rfloor\}$. This follows from the fact that $\{g(F_n/k)k\}\to 0$, and so $g(F_n/k)$ must have fractional part close to $i/k$ for some $0 \leq i \leq \lfloor k/2\rfloor$.

Note that $0$ is still precluded from being a cluster point by Theorem \ref{THM:3gap} --- here, one needs to be careful to show that one cannot have $F_n/k$ being a Fibonacci number itself infinitely often (as otherwise Theorem \ref{THM:3gap} does not readily apply in the same way). One way to see this is to use the fact that
the ratio $F_m/F_n$ is approximately $\tau^{m-n}$. Indeed Suppose that there are infinitely many $n_i$ for which $F_{n_i}/k=F_{m_i}$ for some $m$. Then $k=F_{n_i}/F_{m_i}$ for all $i$, but by Binet's formula, $F_{n_i}/F_{m_i} = \tau^{n_i-m_i}+O(\tau^{-n_i})$. Notice that eventually $n_i-m_i$ must be constant because $\tau^{-n_i}$ converges to 0. Hence, we have $k=\lim_{i \to \infty} F_{n_i}/F_{m_i} = \tau^{N_k}$ for some constant $N_k > 0$. However, this is a contradiction as $k$ is an integer, while $\tau^{N_k}$ is irrational. We could have also used the fact that $F_{m_i}|F_{n_i}$ if and only if $m_i|n_i$, which can only hold for finitely many pairs $(n_i,m_i)$, as $n_i-m_i$ is eventually constant and equal to $N_k$. Hence $m_i|m_i+N_k$, meaning $m_i|N_k$. However, $N_k$ admits only finitely many factors. We conclude that $0$ cannot be a cluster point of $g(F_n/k)$ and so the set of possible cluster points is a subset of $\{i/k \mid 1 \leq i \leq \lfloor k/2\rfloor\}$. We therefore have the following result.

\begin{prop}
Let $k \geq 3$ be fixed. And let $\alpha(k)$ be the rank of $k$. The sequence $(g(F_{n\alpha(k)}/k))_{n \geq 1}$ admits at most $\lfloor k/2\rfloor$ cluster points, none of which is $0$.\qed
\end{prop}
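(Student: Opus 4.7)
The plan is to prove the proposition in two stages: first, use the standard approximation of $F_N\tau$ to pin down the candidate cluster points as rationals of the form $i/k$ with $0\le i\le \lfloor k/2\rfloor$; second, rule out $0$ from this list via a gcd-of-Fibonaccis identity.

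For the first stage, set $d_n := F_{n\alpha(k)}/k \in \N$ and apply Equation \ref{EQ:fibfraclim} with $N = n\alpha(k)$ to obtain
\[
d_n\tau = \frac{F_{n\alpha(k)+1}}{k} - \frac{(-1)^{n\alpha(k)}}{k\tau^{n\alpha(k)}}.
\]
Let $a_n \in \{0,1,\dots,k-1\}$ denote the residue of $F_{n\alpha(k)+1}$ modulo $k$. The first term then has fractional part exactly $a_n/k$, while the second has absolute value $1/(k\tau^{n\alpha(k)})$, which vanishes as $n\to\infty$. Hence $\{d_n\tau\}$ lies within $O(\tau^{-n\alpha(k)})$ of $a_n/k$ on the circle $\R/\Z$, and $g(d_n)=\min(\{d_n\tau\},\,1-\{d_n\tau\})$ sits within the same error of $\min(a_n/k,\,1-a_n/k)=b_n/k$ for some $b_n\in\{0,1,\dots,\lfloor k/2\rfloor\}$. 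Every cluster point of $(g(d_n))_{n\ge 1}$ therefore belongs to the finite set $\{b/k : 0\le b\le \lfloor k/2\rfloor\}$, giving at most $\lfloor k/2\rfloor+1$ candidates.

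For the second stage, suppose $0$ were a cluster point. Then along some subsequence $(n_i)$ we would have $g(d_{n_i})\to 0$, which forces $b_{n_i}=0$, and hence $a_{n_i}=0$, i.e., $k\mid F_{n_i\alpha(k)+1}$. But $k\mid F_{n_i\alpha(k)}$ holds by the defining property of $\alpha(k)$, so $k$ divides $\gcd(F_{n_i\alpha(k)},\,F_{n_i\alpha(k)+1})$. The standard identity $\gcd(F_m,F_n)=F_{\gcd(m,n)}$ applied to consecutive indices yields $F_{\gcd(n_i\alpha(k),\,n_i\alpha(k)+1)}=F_1=1$. Hence $k\mid 1$, contradicting $k\ge 3$. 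So $0$ is not a cluster point, and the list of candidates collapses to $\{b/k:1\le b\le\lfloor k/2\rfloor\}$, giving at most $\lfloor k/2\rfloor$ cluster points, as claimed.

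I do not anticipate a serious obstacle here; the only care needed is in the boundary case where $a_n=0$ and the tiny error term pulls $\{d_n\tau\}$ across the point $0$ on the circle, but this is absorbed automatically by the circular definition of $g$. An alternative route, closer to the hint in the discussion preceding the proposition, would bound $g(d_n)$ from below using Theorem \ref{THM:3gap} by the gap $g(F_{M_n})$ for the largest $F_{M_n}\le d_n$, and then argue via Binet's formula and the divisibility property $F_a\mid F_b\iff a\mid b$ that $d_n$ equals a Fibonacci number only finitely often. That approach is valid but longer; the gcd argument above sidesteps the Fibonacci-valued case entirely.
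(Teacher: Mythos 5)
Your proof is correct, and while the first stage coincides with the paper's (both reduce to the observation that $kF_{N}\tau/k$ is within $O(\tau^{-N})$ of the integer $F_{N+1}$, so $g(F_{N}/k)$ clusters only at points of the form $i/k$), your second stage takes a genuinely different and arguably cleaner route. The paper excludes $0$ as a cluster point by invoking the three-gap theorem (Theorem \ref{THM:3gap}), which forces a side argument to rule out the degenerate possibility that $F_{n\alpha(k)}/k$ is itself a Fibonacci number infinitely often; that side argument goes through Binet's formula (or the divisibility property $F_m\mid F_n\iff m\mid n$) and is the most delicate part of the paper's treatment. You sidestep all of this: from $g(d_{n_i})\to 0$ you extract $k\mid F_{n_i\alpha(k)+1}$, combine it with $k\mid F_{n_i\alpha(k)}$ from the definition of the rank, and conclude $k\mid\gcd(F_{M},F_{M+1})=F_{\gcd(M,M+1)}=F_1=1$, contradicting $k\geq 3$. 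This replaces a dynamical/Diophantine argument with the elementary coprimality of consecutive Fibonacci numbers, and it handles the Fibonacci-valued case of $d_n$ automatically rather than as an exception. The trade-off is that the paper's three-gap approach yields slightly more information along the way (a quantitative lower bound on $g(d)$ for all $d<F_n$, which is reused elsewhere, e.g.\ in Corollary \ref{COR:newmaximafn}), whereas your argument is purpose-built for this proposition; for the statement as given, yours is the shorter and more self-contained proof.
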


Similar reasoning shows that $g(2F_n/k)$ also admits finitely many cluster points, none of which is $0$. In particular, Theorems \ref{THM:adformula} and \ref{THM:adformula2} give $A(d)$ as functions that only take large values in the case that either $g(d)$ or $g(2d)$ are small. However, in both cases, $g(F_n/k)$ and $g(2F_n/k)$ never have $0$ as a cluster point, and so $A(F_{n\alpha(k)}/k)$ is uniformly bounded for a fixed $k$.

\begin{prop}
Let $k \geq 3$ be fixed and let $\alpha(k)$ be the rank of $k$. $\{A(F_{n\alpha(k)}/k)\mid n \geq 1\}$ takes finitely many values.\qed
\end{prop}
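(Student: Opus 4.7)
The plan is to combine the bounded-cluster-point observation from the preceding proposition (together with its analogue for $g(2F_n/k)$ sketched in the paragraph just above) with the explicit formulas for $A(d)$ in Theorems \ref{THM:adformula} and \ref{THM:adformula2}. The crucial point is that both formulas produce large values of $A(d)$ only when one of $g(d)$ or $g(2d)$ is close to zero; consequently, uniform positive lower bounds on both quantities will automatically force $A(F_{n\alpha(k)}/k)$ to lie in a finite set.

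First, I would extract from the preceding proposition that there exists $\epsilon_1 > 0$ with $g(F_{n\alpha(k)}/k) \geq \epsilon_1$ for all $n \geq 1$: a sequence in $[0,1/2]$ whose set of cluster points is finite and avoids $0$ is bounded away from $0$. Next, I would upgrade the remark preceding the proposition into an actual lemma, formalizing that $(g(2F_{n\alpha(k)}/k))_{n\geq 1}$ likewise has only finitely many cluster points, none of which is $0$. This re-runs the argument of the preceding proposition with $2F_{n\alpha(k)}/k$ in place of $F_{n\alpha(k)}/k$; the only nontrivial point is excluding $0$ as a cluster point, for which the Binet-style argument transfers: if $2F_{n_i}/k$ were a Fibonacci number $F_{m_i}$ for infinitely many $i$, then $k/2 = F_{n_i}/F_{m_i} \to \tau^{N}$ for some integer $N$, contradicting the rationality of $k/2$. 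This yields $\epsilon_2 > 0$ with $g(2F_{n\alpha(k)}/k) \geq \epsilon_2$ for all $n$.

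The bound on $A$ then follows by a case split. If $g(F_{n\alpha(k)}/k) < \tau^{-2}$, Theorem \ref{THM:adformula} gives
\[A(F_{n\alpha(k)}/k) = \left\lceil \tau^{-1}/g(F_{n\alpha(k)}/k)\right\rceil \leq \lceil \tau^{-1}/\epsilon_1\rceil;\]
otherwise $g(F_{n\alpha(k)}/k) \in [\tau^{-2},1/2]$ and Theorem \ref{THM:adformula2} gives
\[A(F_{n\alpha(k)}/k) = 2\left\lceil (\tau^{-1}-g(F_{n\alpha(k)}/k))/g(2F_{n\alpha(k)}/k)\right\rceil \leq 2\lceil \tau^{-1}/\epsilon_2\rceil.\]
Either way $A(F_{n\alpha(k)}/k)$ is bounded above by a constant depending only on $k$, and the proposition follows. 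The main obstacle is verifying the cluster-point analogue for the doubled sequence cleanly, especially handling the distinction between even and odd $k$ so that passing from $F_{n\alpha(k)}/k$ to $2F_{n\alpha(k)}/k$ does not introduce spurious divisibility issues; beyond this bookkeeping, the rest is a direct application of the two main formulas.
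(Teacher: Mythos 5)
Your proposal is correct and follows essentially the same route as the paper, which proves this proposition in the paragraph immediately preceding it: both arguments note that $g(F_{n\alpha(k)}/k)$ and $g(2F_{n\alpha(k)}/k)$ have finitely many cluster points, none equal to $0$, hence are bounded away from $0$, and then invoke Theorems \ref{THM:adformula} and \ref{THM:adformula2} to conclude that $A$ is uniformly bounded. Your write-up merely makes explicit the constants $\epsilon_1,\epsilon_2$, the case split, and the Binet-style exclusion of $0$ for the doubled sequence, all of which the paper leaves as ``similar reasoning.''
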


Thankfully, in the case that $k=3$, we still have $g(F_n/3)\to 1/3$, and so we can treat this case similarly to the case of $k=2$. We have $\alpha(3)=4$ and so we consider the Fibonacci numbers with index a multiple of $4$. We find that $g(F_{4n}/3)\to 1/3$. $F_4/3 = 1$ and so this case is trivial. For $n=2$, already we have $g(F_8/3) = g(7)\approx 0.326 \in [0,\tau^{-2}]$ and $|g(F_{4n}/3)-1/3|$ monotonically decreases, so we are always in the case that we can apply Theorem \ref{THM:adformula}. Also, we have $0.326 \leq g(F_{4n}/3) \leq 0.341$ by the above monotonicity. It follows that, for $n \geq 2$, we have
\[
A(F_{4n}/3) = \left\lceil \frac{\tau^{-1}}{g(F_{4n}/3)}\right\rceil = 2,
\]
because the inequality $0.326 \leq g(F_{4n}/3) \leq 0.341$ gives $1.812 \leq \tau^{-1}/g(F_{4n}/3)\leq 1.896$.

We can check this in {\walnut}:
\begin{itemize}
\item {\ttfamily reg fib4n msd\_fib "0*100(0000)*";\\\# This accepts $F_{4n}$ for all $n>0$.}
\item {\ttfamily def fib4nby3 "?msd\_fib Ek (\$fib4n(k) \& k=3*n)";\\\# This only accepts $F_{4n}/3$ for all $n>0$.}
\item {\ttfamily eval isfibcheck "?msd\_fib An (n>=0) => (\$fib4nby3(n) \& \$fiboyes(n))";\\\# Checks if $F_{4n}/3$ is $F_m$ for some $m$; returns FALSE.}
\item {\ttfamily eval fib4nby3ad2check "?msd\_fib Ad (d>0 \& \$fib4nby3(d) => \$fibAd2(d))";\\\# Checks if all $A(F_{4n}/3)=2$; returns TRUE.}
\end{itemize}

We may similarly use {\walnut} to get the following list of $A(F_n/k)$ values for the first few $k$, which would be laborious to prove using the methods we employed in the $k=2$ and $k=3$ case because $g(d)$ can (a priori) take more than one value\footnote{Nevertheless, sometimes, as in the case for $k=5$, even though $g(F_n/k)$ has more than one cluster point, $A(F_n/k)$ still only takes one value.}:

\begin{prop}
We have
\begin{itemize}
\item $A(F_{4n}/3)=2$ for all $n \geq 1$
\item $A(F_{6n}/4)=3$ for all $n \geq 1$
\item $A(F_{5n}/5)=4$ for all $n \geq 2$ \qquad (and $A(F_5/5)=2$)
\item $A(F_{12n}/6)=4$ for all $n \geq 1$
\item $A(F_{8n}/7)=5$ for all $n \geq 1$
\item $A(F_{6n}/8)\in\{2,5\}$ for all $n \geq 1$ \quad (in particular, $A(F_{12n-6}/8)=2$ and $A(F_{12n}/8)=5$)
\item $A(F_{12n}/9)=6$ for all $n \geq 1$
\item $A(F_{15n}/10)\in\{3,7\}$ for all $n \geq 1$ \quad (in particular, $A(F_{30n-15}/10)=3$ and $A(F_{30n}/10)=7$)
\end{itemize}
\end{prop}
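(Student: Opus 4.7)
The plan is to prove each item in the proposition by a separate routine {\walnut} verification, following the template that was just established for $A(F_{4n}/3)=2$. The scheme has three uniform steps per case. First, for each $k$ and each residue class of the Fibonacci index modulo the relevant period, use the fact that $(F_m)_F = 10^{m-2}$ to write down a regular expression of the shape \texttt{0*10$^{\,r}$(0$^{\,p}$)*} that accepts exactly the Zeckendorf representations of those $F_m$. Writing \texttt{fibfam} for this regex, the family $\{F_{n\alpha(k)}/k \mid n \geq 1\}$ is first-order definable as $\exists e \colon \texttt{fibfam}(e) \wedge e = k\cdot d$, which {\walnut} handles since $k$ is a fixed positive integer constant.

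Second, for each target value $\ell$ appearing in the proposition, build (once, for reuse) an automaton \texttt{fibad}$\ell$ accepting exactly the $d$ for which $A(d) = \ell$, by adapting the construction of \texttt{fibad3aut} from the proof of Proposition \ref{PROP:ad3fib} in the obvious way (replace the conjunction asserting a length-$\ell$ MAP with the one asserting a length-$\ell$ MAP, and negate the existence of a length-$(\ell+1)$ MAP). Third, evaluate in {\walnut} the implication that every $d$ accepted by the family predicate from step one is also accepted by \texttt{fibad}$\ell$. For the cases $k\in\{8,10\}$ where $A$ takes two values on $\{F_{n\alpha(k)}/k\}$, the family is split into two arithmetic subfamilies by choosing two different regular expressions at step one (splitting $F_{6n}$ into $F_{12n}$ and $F_{12n-6}$ when $k=8$, and $F_{15n}$ into $F_{30n}$ and $F_{30n-15}$ when $k=10$), and step three is run once per subfamily. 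Isolated exceptional values, such as $A(F_5/5) = 2$, are checked directly from Table \ref{tab:fibdadid}.

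The main obstacle is computational rather than conceptual: as illustrated by the running-time growth reported for Table \ref{tab:tmlodforad}, the automaton associated with the predicate ``$A(d) = \ell$'' grows rapidly in size with $\ell$, and each verification must be run separately because {\walnut} cannot quantify over $k$. However, all the values $\ell \leq 7$ occurring in the proposition lie within the range previously observed to be tractable on available hardware. Heuristic justification that $A$ should indeed be eventually constant on each subfamily is supplied by the cluster-point analysis preceding the proposition: $g(F_{n\alpha(k)}/k)$ and $g(2F_{n\alpha(k)}/k)$ have only finitely many cluster points, none equal to $0$, so Theorems \ref{THM:adformula} and \ref{THM:adformula2} force $A$ to stabilise; {\walnut} then serves both to identify the stable values and to certify constancy for all $n\ge 1$ simultaneously.
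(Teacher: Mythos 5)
Your proposal matches the paper's approach: the paper proves this proposition by exactly the \walnut{} template it exhibits for the $k=3$ case (a regular expression for the Zeckendorf representations of the relevant Fibonacci subfamily, a predicate defining $F_{n\alpha(k)}/k$ via the fixed-constant equation $e=k\cdot d$, and an implication into a precomputed automaton for ``$A(d)=\ell$''), with the $k=8$ and $k=10$ families split into two arithmetic subfamilies just as you describe. The only difference is that your write-up supplies more detail than the paper, which simply states that one ``may similarly use \walnut{}''.
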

So, the first $k$ for which $A(F_n/k)$ takes more than one value infinitely often is $k=8$.

The reason we were so interested in finding families of $d$ for which $A(d)$ is constant or at least takes only finitely many values was two-fold: first, if $A(d)$ is constant for the family, then one can use {\walnut} to prove properties about the entire family simultaneously (for instance finding automata that describe $i(d)$, the earliest index at which a longest arithmetic progression appears); secondly, it was hoped that if enough values of $A(d)$ could be shown to be small, then it would be possible to improve the asymptotic bounds on $A(d)$. Unfortunately, we were not able to realise the second point and so we leave this as a conjecture.
\begin{conj}\label{CONJ:fibmaxslope}
For all $d \geq 1$,
\[
\frac{A(d)-1}{d} < \frac{\sqrt{5}}{\tau}.
\]
\end{conj}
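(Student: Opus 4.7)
The plan is to handle the two regimes from Theorems~\ref{THM:adformula} and~\ref{THM:adformula2} separately, using the three-distance theorem (Theorem~\ref{THM:3gap}) to lower-bound $g(d)$ or $g(2d)$ and Binet's formula $\sqrt{5}F_k = \tau^k - (-\tau)^{-k}$ together with the Lucas identity $L_k = \tau^k + (-1)^k\tau^{-k}$ to compare $A(d)-1$ against $\sqrt{5}d/\tau$.

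In Case~1, where $g(d) < \tau^{-2}$ so that necessarily $F_{n-1} \leq d < F_n$ for some $n\geq 4$, Theorem~\ref{THM:3gap} gives $g(d) \geq \tau^{-(n-1)}$ and then Theorem~\ref{THM:adformula} yields $A(d) \leq \lceil \tau^{n-2}\rceil$. The Lucas identity implies $\lceil \tau^{n-2}\rceil = L_{n-2}$ when $n-2$ is even and $\lceil \tau^{n-2}\rceil = L_{n-2}+1$ when $n-2$ is odd; combined with $\sqrt{5}F_{n-1}/\tau = \tau^{n-2} - (-1)^{n-1}\tau^{-n}$, the desired inequality $\lceil \tau^{n-2}\rceil - 1 < \sqrt{5}F_{n-1}/\tau$ reduces in each parity of $n$ to an elementary inequality such as $\tau^{-n}(\tau^2-1) < 1$ or $\tau^2 > 1$. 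Since $d \geq F_{n-1}$, this gives $A(d)-1 < \sqrt{5}d/\tau$.

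In Case~2, with $\tau^{-2} \leq g(d) \leq 1/2$ and $d\geq 2$, the identities $g(2d) = 1-2g(d)$ and $\tau^{-1}-1/2 = \tau^{-3}/2$ rewrite Theorem~\ref{THM:adformula2} as $A(d) = 2\lceil \tau^{-3}/(2g(2d)) + 1/2\rceil$. Equivalently, the two-cluster geometry in the proof of Theorem~\ref{THM:adformula2} yields $(A(d)-3)g(2d) < 2\tau^{-1}-1 = \tau^{-3}$, whence $A(d)-1 < \tau^{-3}/g(2d) + 2$. Applying Theorem~\ref{THM:3gap} to $2d$ with $F_{m-1}\leq 2d < F_m$ gives $g(2d)\geq \tau^{-(m-1)}$ and $d\geq F_{m-1}/2$; using $\tau^{m-2}-2\tau^{m-4} = \tau^{m-5}$, the conjecture reduces to the inequality $\tau^{m-5} \geq 4 + (-1)^{m-1}\tau^{-m}$, which holds for all $m\geq 8$ since $\tau^3 \approx 4.236$.

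The main obstacle is therefore the finite residual set of Case~2 values with $2d < F_7 = 13$, i.e.\ $d\leq 6$. A direct inspection of $g(d)$ for $1\leq d\leq 6$ shows that the only Case~2 instance in this range is $d=4$ (with $g(4)\approx 0.472$); the value $d=1$ lies on the boundary $g(1)=\tau^{-2}$. From Table~\ref{tab:fibdadid}, $A(4)=6$ gives $A(4)-1=5 < \sqrt{5}\cdot 4/\tau \approx 5.528$, and $A(1)=2$ gives $A(1)-1 = 1 < \sqrt{5}/\tau \approx 1.382$. Alternatively, these finitely many small cases can be handled uniformly by a short {\walnut} check. This completes the proof of Conjecture~\ref{CONJ:fibmaxslope}.
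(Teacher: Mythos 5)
This statement is not proved in the paper: it appears only as Conjecture~\ref{CONJ:fibmaxslope}, and the authors say explicitly that they were unable to establish it. So there is no proof of theirs to compare yours against; I have instead checked your argument on its own terms, and it appears to be a correct and complete proof, using only results the paper itself establishes (Theorems~\ref{THM:adformula}, \ref{THM:adformula2} and \ref{THM:3gap}) together with the Binet and Lucas closed forms. Both pivotal reductions check out. In your Case~1, Theorem~\ref{THM:3gap} gives $g(d)\geq g(F_{n-1})=\tau^{-(n-1)}$ for $F_{n-1}\leq d<F_n$, hence $A(d)\leq\lceil\tau^{n-2}\rceil$ by Theorem~\ref{THM:adformula}; writing $\lceil\tau^{k}\rceil=L_k$ or $L_k+1$ according to the parity of $k$, where $L_k=\tau^k+(-1)^k\tau^{-k}$, and using $\sqrt{5}F_{n-1}/\tau=\tau^{n-2}-(-1)^{n-1}\tau^{-n}$, the target inequality reduces to $\tau^{1-n}<1$ for $n$ even and to $\tau^{-(n-2)}>\tau^{-n}$ for $n$ odd. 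The margin at $d=F_{n-1}$ with $n-1$ even is exactly $\tau^{-(n-1)}$, which is consistent with the constant $\sqrt{5}/\tau$ being sharp and shows the parity bookkeeping cannot be skipped. In your Case~2, the bound $A(d)=2\lceil\tau^{-3}/(2g(2d))+1/2\rceil<\tau^{-3}/g(2d)+3$ gives $A(d)-1<\tau^{-3}/g(2d)+2$, and combining $g(2d)\geq\tau^{-(m-1)}$ with $d\geq F_{m-1}/2$ reduces everything to $\tau^{m-5}\geq 4+(-1)^{m-1}\tau^{-m}$, which indeed holds for all $m\geq8$. I confirm that the only values escaping this argument, namely Case-2 values with $2d<F_7=13$, are $d=4$ and the boundary value $d=1$ (where $g(1)=\tau^{-2}$ exactly, so Theorem~\ref{THM:adformula2} does not apply as it requires $d\geq2$), and both satisfy the conjectured inequality by direct computation as you note. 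Two details worth spelling out in a final write-up: first, why Theorem~\ref{THM:3gap} yields the lower bound on $g(d)$ (namely, $g(i)$ is precisely the distance from $\{i\tau\}$ to the boundary point of the circle, and $\{F_{n-1}\tau\}$ is the closest such point among all $i\leq d$, so $g(d)\geq g(F_{n-1})$); second, that the final inequality is strict because it inherits strictness from $\lceil y\rceil<y+1$ in Case~2 and from $\tau^{1-n}<1$ and $\tau^2>1$ in Case~1. With those points made explicit, your argument resolves Conjecture~\ref{CONJ:fibmaxslope} affirmatively; given that the authors state this was beyond their reach, you should present it as a theorem rather than as a verification.
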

The appearance of `$-1$' in the numerator takes into account that while $A(F_n)/F_n \to \sqrt{5}\tau^{-1}$, this convergence is from above. So these ratios appear to overestimate the slope with error uniformly bounded above by $1$. For non-Fibonacci values of $d$, this correction term should not be necessary.

\subsection{The step distance ranges}
We now pivot to investigate the function $g(d)$ in more detail.
\begin{cor}\label{COR:ad4andeven}
If $\tau^{-2}< g(d)<0.5$, then $A(d)\geq4$ and is even.
\end{cor}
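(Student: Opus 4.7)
The plan is to apply Theorem \ref{THM:adformula2} directly. Since the hypothesis $\tau^{-2} < g(d) < 0.5$ sits strictly inside the range $[\tau^{-2}, 0.5]$ on which that theorem applies, we have
\[
A(d) = 2\left\lceil \frac{\tau^{-1} - g(d)}{g(2d)}\right\rceil,
\]
which is manifestly even. So the only work is to verify $A(d) \geq 4$, i.e.\ that the ceiling is at least $2$.

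To do this, I would first pin down $g(2d)$ in the given range. Splitting into the two cases $\{d\tau\} < 0.5$ and $\{d\tau\} > 0.5$, a direct computation using $\{2x\} = \{2\{x\}\}$ and $\{-x\} = 1 - \{x\}$ shows that $\{2d\tau\}$ equals either $2g(d)$ or $1 - 2g(d)$. Since $g(d) > \tau^{-2}$ forces $2g(d) > 2\tau^{-2} > 0.5$, in either case we obtain $g(2d) = 1 - 2g(d)$.

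With this identity in hand, the ceiling is at least $2$ precisely when $\tau^{-1} - g(d) > g(2d) = 1 - 2g(d)$, which rearranges to $g(d) > 1 - \tau^{-1} = \tau^{-2}$ — exactly the lower bound in our hypothesis. Hence $A(d) \geq 2 \cdot 2 = 4$.

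There isn't really a hard step here; the only subtlety is ensuring $g(2d) = 1 - 2g(d)$ in both sign-cases of $\{d\tau\}$, and confirming that the strict inequality $g(d) > \tau^{-2}$ (rather than $\geq$) is what drives the strict inequality $\frac{\tau^{-1}-g(d)}{g(2d)} > 1$ needed to push the ceiling up to $2$.
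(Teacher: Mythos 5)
Your proof is correct and follows essentially the same route as the paper, which simply cites the proof of Theorem \ref{THM:adformula2}: evenness is immediate from the formula, and the identity $g(2d)=1-2g(d)$ (also recorded in the paper just after that theorem) turns the condition for the ceiling to be at least $2$ into exactly the strict inequality $g(d)>\tau^{-2}$. The only detail worth adding is that the hypothesis $d\geq 2$ of Theorem \ref{THM:adformula2} is automatic here, since $g(1)=\tau^{-2}$ is excluded by the strictness of your assumed lower bound.
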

\begin{proof}
This follows from the proof of Theorem \ref{THM:adformula2}.
\end{proof}

In Figure \ref{FIG:plotadvgd}, $A(d)$ values are plotted against the possible range of $g(d)$ with $0.001$ increment in $g(d)$ values which results in a `discrete-hyperbolic' graph.
\begin{figure}
  \centering
  \begin{tikzpicture}
    \begin{axis}[
      width=0.99\textwidth,    
      height=0.6\textwidth,   
      xlabel={$g(d)$},
      ylabel={$A(d)$},
      legend style={at={(0.5,-0.1)}, anchor=north, legend columns=-1},
      mark size=0.3pt,          
      ]
      
      \addplot[
        only marks,            
        mark=*,             
        color=black,
        ] 
        coordinates {
          (0.01, 62.0)
(0.011, 57.0)
(0.012, 52.0)
(0.013, 48.0)
(0.014, 45.0)
(0.015, 42.0)
(0.016, 39.0)
(0.017, 37.0)
(0.018, 35.0)
(0.019, 33.0)
(0.02, 31.0)
(0.021, 30.0)
(0.022, 29.0)
(0.023, 27.0)
(0.024, 26.0)
(0.025, 25.0)
(0.026, 24.0)
(0.027, 23.0)
(0.028, 23.0)
(0.029, 22.0)
(0.03, 21.0)
(0.031, 20.0)
(0.032, 20.0)
(0.033, 19.0)
(0.034, 19.0)
(0.035, 18.0)
(0.036, 18.0)
(0.037, 17.0)
(0.038, 17.0)
(0.039, 16.0)
(0.04, 16.0)
(0.041, 16.0)
(0.042, 15.0)
(0.043, 15.0)
(0.044, 15.0)
(0.045, 14.0)
(0.046, 14.0)
(0.047, 14.0)
(0.048, 13.0)
(0.049, 13.0)
(0.05, 13.0)
(0.051, 13.0)
(0.052, 12.0)
(0.053, 12.0)
(0.054, 12.0)
(0.055, 12.0)
(0.056, 12.0)
(0.057, 11.0)
(0.058, 11.0)
(0.059, 11.0)
(0.06, 11.0)
(0.061, 11.0)
(0.062, 10.0)
(0.063, 10.0)
(0.064, 10.0)
(0.065, 10.0)
(0.066, 10.0)
(0.067, 10.0)
(0.068, 10.0)
(0.069, 9.0)
(0.07, 9.0)
(0.071, 9.0)
(0.072, 9.0)
(0.073, 9.0)
(0.074, 9.0)
(0.075, 9.0)
(0.076, 9.0)
(0.077, 9.0)
(0.078, 8.0)
(0.079, 8.0)
(0.08, 8.0)
(0.081, 8.0)
(0.082, 8.0)
(0.083, 8.0)
(0.084, 8.0)
(0.085, 8.0)
(0.086, 8.0)
(0.087, 8.0)
(0.088, 8.0)
(0.089, 7.0)
(0.09, 7.0)
(0.091, 7.0)
(0.092, 7.0)
(0.093, 7.0)
(0.094, 7.0)
(0.095, 7.0)
(0.096, 7.0)
(0.097, 7.0)
(0.098, 7.0)
(0.099, 7.0)
(0.1, 7.0)
(0.101, 7.0)
(0.102, 7.0)
(0.103, 7.0)
(0.104, 6.0)
(0.105, 6.0)
(0.106, 6.0)
(0.107, 6.0)
(0.108, 6.0)
(0.109, 6.0)
(0.11, 6.0)
(0.111, 6.0)
(0.112, 6.0)
(0.113, 6.0)
(0.114, 6.0)
(0.115, 6.0)
(0.116, 6.0)
(0.117, 6.0)
(0.118, 6.0)
(0.119, 6.0)
(0.12, 6.0)
(0.121, 6.0)
(0.122, 6.0)
(0.123, 6.0)
(0.124, 5.0)
(0.125, 5.0)
(0.126, 5.0)
(0.127, 5.0)
(0.128, 5.0)
(0.129, 5.0)
(0.13, 5.0)
(0.131, 5.0)
(0.132, 5.0)
(0.133, 5.0)
(0.134, 5.0)
(0.135, 5.0)
(0.136, 5.0)
(0.137, 5.0)
(0.138, 5.0)
(0.139, 5.0)
(0.14, 5.0)
(0.141, 5.0)
(0.142, 5.0)
(0.143, 5.0)
(0.144, 5.0)
(0.145, 5.0)
(0.146, 5.0)
(0.147, 5.0)
(0.148, 5.0)
(0.149, 5.0)
(0.15, 5.0)
(0.151, 5.0)
(0.152, 5.0)
(0.153, 5.0)
(0.154, 5.0)
(0.155, 4.0)
(0.156, 4.0)
(0.157, 4.0)
(0.158, 4.0)
(0.159, 4.0)
(0.16, 4.0)
(0.161, 4.0)
(0.162, 4.0)
(0.163, 4.0)
(0.164, 4.0)
(0.165, 4.0)
(0.166, 4.0)
(0.167, 4.0)
(0.168, 4.0)
(0.169, 4.0)
(0.17, 4.0)
(0.171, 4.0)
(0.172, 4.0)
(0.173, 4.0)
(0.174, 4.0)
(0.175, 4.0)
(0.176, 4.0)
(0.177, 4.0)
(0.178, 4.0)
(0.179, 4.0)
(0.18, 4.0)
(0.181, 4.0)
(0.182, 4.0)
(0.183, 4.0)
(0.184, 4.0)
(0.185, 4.0)
(0.186, 4.0)
(0.187, 4.0)
(0.188, 4.0)
(0.189, 4.0)
(0.19, 4.0)
(0.191, 4.0)
(0.192, 4.0)
(0.193, 4.0)
(0.194, 4.0)
(0.195, 4.0)
(0.196, 4.0)
(0.197, 4.0)
(0.198, 4.0)
(0.199, 4.0)
(0.2, 4.0)
(0.201, 4.0)
(0.202, 4.0)
(0.203, 4.0)
(0.204, 4.0)
(0.205, 4.0)
(0.206, 4.0)
(0.207, 3.0)
(0.208, 3.0)
(0.209, 3.0)
(0.21, 3.0)
(0.211, 3.0)
(0.212, 3.0)
(0.213, 3.0)
(0.214, 3.0)
(0.215, 3.0)
(0.216, 3.0)
(0.217, 3.0)
(0.218, 3.0)
(0.219, 3.0)
(0.22, 3.0)
(0.221, 3.0)
(0.222, 3.0)
(0.223, 3.0)
(0.224, 3.0)
(0.225, 3.0)
(0.226, 3.0)
(0.227, 3.0)
(0.228, 3.0)
(0.229, 3.0)
(0.23, 3.0)
(0.231, 3.0)
(0.232, 3.0)
(0.233, 3.0)
(0.234, 3.0)
(0.235, 3.0)
(0.236, 3.0)
(0.237, 3.0)
(0.238, 3.0)
(0.239, 3.0)
(0.24, 3.0)
(0.241, 3.0)
(0.242, 3.0)
(0.243, 3.0)
(0.244, 3.0)
(0.245, 3.0)
(0.246, 3.0)
(0.247, 3.0)
(0.248, 3.0)
(0.249, 3.0)
(0.25, 3.0)
(0.251, 3.0)
(0.252, 3.0)
(0.253, 3.0)
(0.254, 3.0)
(0.255, 3.0)
(0.256, 3.0)
(0.257, 3.0)
(0.258, 3.0)
(0.259, 3.0)
(0.26, 3.0)
(0.261, 3.0)
(0.262, 3.0)
(0.263, 3.0)
(0.264, 3.0)
(0.265, 3.0)
(0.266, 3.0)
(0.267, 3.0)
(0.268, 3.0)
(0.269, 3.0)
(0.27, 3.0)
(0.271, 3.0)
(0.272, 3.0)
(0.273, 3.0)
(0.274, 3.0)
(0.275, 3.0)
(0.276, 3.0)
(0.277, 3.0)
(0.278, 3.0)
(0.279, 3.0)
(0.28, 3.0)
(0.281, 3.0)
(0.282, 3.0)
(0.283, 3.0)
(0.284, 3.0)
(0.285, 3.0)
(0.286, 3.0)
(0.287, 3.0)
(0.288, 3.0)
(0.289, 3.0)
(0.29, 3.0)
(0.291, 3.0)
(0.292, 3.0)
(0.293, 3.0)
(0.294, 3.0)
(0.295, 3.0)
(0.296, 3.0)
(0.297, 3.0)
(0.298, 3.0)
(0.299, 3.0)
(0.3, 3.0)
(0.301, 3.0)
(0.302, 3.0)
(0.303, 3.0)
(0.304, 3.0)
(0.305, 3.0)
(0.306, 3.0)
(0.307, 3.0)
(0.308, 3.0)
(0.309, 3.0)
(0.31, 2.0)
(0.311, 2.0)
(0.312, 2.0)
(0.313, 2.0)
(0.314, 2.0)
(0.315, 2.0)
(0.316, 2.0)
(0.317, 2.0)
(0.318, 2.0)
(0.319, 2.0)
(0.32, 2.0)
(0.321, 2.0)
(0.322, 2.0)
(0.323, 2.0)
(0.324, 2.0)
(0.325, 2.0)
(0.326, 2.0)
(0.327, 2.0)
(0.328, 2.0)
(0.329, 2.0)
(0.33, 2.0)
(0.331, 2.0)
(0.332, 2.0)
(0.333, 2.0)
(0.334, 2.0)
(0.335, 2.0)
(0.336, 2.0)
(0.337, 2.0)
(0.338, 2.0)
(0.339, 2.0)
(0.34, 2.0)
(0.341, 2.0)
(0.342, 2.0)
(0.343, 2.0)
(0.344, 2.0)
(0.345, 2.0)
(0.346, 2.0)
(0.347, 2.0)
(0.348, 2.0)
(0.349, 2.0)
(0.35, 2.0)
(0.351, 2.0)
(0.352, 2.0)
(0.353, 2.0)
(0.354, 2.0)
(0.355, 2.0)
(0.356, 2.0)
(0.357, 2.0)
(0.358, 2.0)
(0.359, 2.0)
(0.36, 2.0)
(0.361, 2.0)
(0.362, 2.0)
(0.363, 2.0)
(0.364, 2.0)
(0.365, 2.0)
(0.366, 2.0)
(0.367, 2.0)
(0.368, 2.0)
(0.369, 2.0)
(0.37, 2.0)
(0.371, 2.0)
(0.372, 2.0)
(0.373, 2.0)
(0.374, 2.0)
(0.375, 2.0)
(0.376, 2.0)
(0.377, 2.0)
(0.378, 2.0)
(0.379, 2.0)
(0.38, 2.0)
(0.381, 2.0)
(0.38196601125, 2.0)
(0.382, 4.0)
(0.383,4)
(0.384,4)
(0.385,4)
(0.386,4)
(0.387,4)
(0.388,4)
(0.389,4)
(0.390,4)
(0.391,4)
(0.392,4)
(0.393,4)
(0.394,4)
(0.395,4)
(0.396,4)
(0.397,4)
(0.398,4)
(0.399,4)
(0.400,4)
(0.401,4)
(0.402,4)
(0.403,4)
(0.404,4)
(0.405,4)
(0.406,4)
(0.407,4)
(0.408,4)
(0.409,4)
(0.410,4)
(0.411,4)
(0.412,4)
(0.413,4)
(0.414,4)
(0.415,4)
(0.416,4)
(0.417,4)
(0.418,4)
(0.419,4)
(0.420,4)
(0.421,4)
(0.422,4)
(0.423,4)
(0.424,4)
(0.425,4)
(0.426,4)
(0.427,4)
(0.428,4)
(0.429,4)
(0.430,4)
(0.431,4)
(0.432,4)
(0.433,4)
(0.434,4)
(0.435,4)
(0.436,4)
(0.437,4)
(0.438,4)
(0.439,4)
(0.440,4)
(0.441,4)
(0.442,4)
(0.443,4)
(0.444,4)
(0.445,4)
(0.446,4)
(0.447,4)
(0.448,4)
(0.449,4)
(0.450,4)
(0.451,4)
(0.452,4)
(0.453,4)
(0.454,4)
(0.455,4)
(0.456,4)
(0.457,4)
(0.458,4)
(0.459,4)
(0.460,4)
(0.461,6)
(0.462,6)
(0.463,6)
(0.464,6)
(0.465,6)
(0.466,6)
(0.467,6)
(0.468,6)
(0.469,6)
(0.470,6)
(0.471,6)
(0.472,6)
(0.473,6)
(0.474,6)
(0.475,6)
(0.476,8)
(0.477,8)
(0.478,8)
(0.479,8)
(0.480,8)
(0.481,8)
(0.482,8)
(0.483, 10.0)
(0.484, 10.0)
(0.485, 10.0)
(0.486, 10.0)
(0.487, 12.0)
(0.488, 12.0)
(0.489, 14.0)
(0.490, 14.0)
(0.491, 16.0)
(0.492, 18.0)
(0.493, 20.0)
(0.494, 24.0)
(0.495, 28.0)
(0.496, 36.0)
(0.497, 48.0)
        };
    \end{axis}
  \end{tikzpicture}
  \caption{Plot of $A(d)$ for the entire range of $g(d)$.}
  \label{FIG:plotadvgd}
\end{figure}
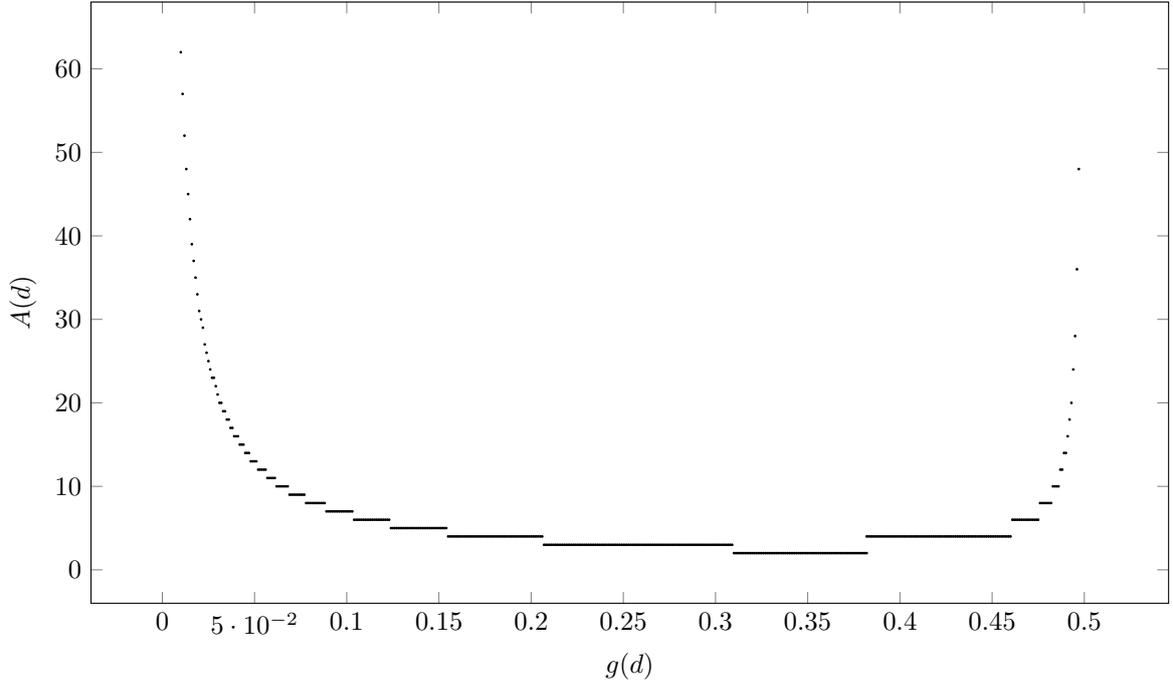
We can determine the range of $g(d)$ for which $A(d)$ is some integer $k\geq2$ using the formulae for $A(d)$ from Theorems \ref{THM:adformula} and \ref{THM:adformula2}. That is, we can calculate the level sets of $A(d)$ as a function of $g(d)$.

For  $A(d)=2$, using Corollary \ref{COR:ad4andeven}, it is clear that $g(d)=\tau^{-2}$ is the maximal possible value of $g(d)$. For the minimum, we use the formula $A(d)=\left\lceil \frac{\tau^{-1}}{g(d)}\right\rceil$ and so the minimum possible value of $g(d)$ is $g(d)=\tau^{-1}/2$. Recall from Remark \ref{REM:I0gdnodiv}, that $g(d)$ never completely divides $I_0$, so the left end point is excluded from the set of possible values of $g(d)$. Hence we have $g(d)\in(\tau^{-1}/2,\tau^{-2}]$.

Similarly, the level set for $A(d)=3$ can be shown to be $g(d)\in(\tau^{-1}/3,\tau^{-1}/2)$. Even values of $A(d)$ are slightly more involved because there are two disjoint intervals that comprise the level set; one in which $g(d)<\tau^{-2}$ and one in which $g(d)>\tau^{-2}$. For example, when $A(d)=4$, along with $g(d)\in(\tau^{-1}/4,\tau^{-1}/3)$, we also have the interval $g(d)\in\left(\tau^{-2},\frac{2-\tau^{-1}}{3}\right)$ using the formula for $A(d)$ from Theorem \ref{THM:adformula2}.

The general result is the following.
\begin{prop}\label{PROP:gdranges}
Let $k\geq3$. If $k$ is odd, then $A(d)=k$ if and only if \[g(d)\in\left(\frac{\tau^{-1}}{k},\frac{\tau^{-1}}{k-1}\right);\]
and if $k$ is even, then $A(d)=k$ if and only if \[g(d)\in\left(\frac{\tau^{-1}}{k},\frac{\tau^{-1}}{k-1}\right)\cup\left(\frac{k-2\tau}{2k-6},\frac{k-2\tau^{-1}}{2k-2}\right).\]
\end{prop}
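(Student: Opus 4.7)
The plan is to apply Theorems \ref{THM:adformula} and \ref{THM:adformula2} directly, splitting into cases by the parity of $k$ and by the range of $g(d)$. The work is essentially algebraic; the key observation is that Corollary \ref{COR:ad4andeven} forces $g(d) \leq \tau^{-2}$ whenever $A(d)$ is odd, which cleanly separates the odd and even cases.

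For odd $k \geq 3$, Corollary \ref{COR:ad4andeven} rules out $g(d) \in (\tau^{-2}, 0.5)$, so Theorem \ref{THM:adformula} applies and $A(d) = \lceil \tau^{-1}/g(d)\rceil = k$ is equivalent to $k-1 < \tau^{-1}/g(d) \leq k$, i.e., $g(d) \in [\tau^{-1}/k, \tau^{-1}/(k-1))$. Both endpoints would force $\tau^{-1}/g(d)$ to be an integer, which is precluded by Remark \ref{REM:I0gdnodiv}, yielding the stated open interval.

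For even $k \geq 4$, I would treat the two subintervals of the level set separately. First, if $g(d) < \tau^{-2}$, the same argument as in the odd case using Theorem \ref{THM:adformula} gives $g(d) \in (\tau^{-1}/k, \tau^{-1}/(k-1))$. Second, if $g(d) \in [\tau^{-2}, 0.5)$, I would apply Theorem \ref{THM:adformula2} using the identity $g(2d) = 1 - 2g(d)$ valid on this range, so
\[
A(d) = 2\left\lceil \frac{\tau^{-1} - g(d)}{1 - 2g(d)} \right\rceil.
\]
Setting this equal to $k$ is equivalent to
\[
\tfrac{k}{2} - 1 < \frac{\tau^{-1} - g(d)}{1 - 2g(d)} \leq \tfrac{k}{2}.
\]
Since $1 - 2g(d) > 0$, clearing denominators and solving for $g(d)$ yields the upper bound $g(d) \leq (k - 2\tau^{-1})/(2k-2)$ and the lower bound $g(d) > (k - 2 - 2\tau^{-1})/(2k-6)$; using $\tau^{-1} = \tau - 1$ rewrites the latter as $(k - 2\tau)/(2k-6)$, which matches the statement. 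The openness of both endpoints follows from the fact, established in the proof of Theorem \ref{THM:adformula2}, that $g(2d)$ never divides $\tau^{-1} - g(d)$ an integer number of times for $d \geq 2$.

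The main obstacle, such as it is, will simply be verifying that the two intervals in the even case do not overlap and together exhaust the level set. This reduces to checking $\tau^{-1}/(k-1) < \tau^{-2} \leq (k - 2\tau)/(2k-6)$ for $k \geq 4$ (the second inequality uses $\tau^{-2} = 1 - \tau^{-1}$), and confirming via Remark \ref{REM:I0gdnodiv} that the boundary value $g(d) = \tau^{-2}$ is realised only by $d = 1$, for which $A(1) = 2$, so no $d$ with $A(d) = k \geq 3$ is lost at this junction. No deeper tool beyond the two formulae and Remark \ref{REM:I0gdnodiv} is needed.
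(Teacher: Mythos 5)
Your proposal is correct and follows essentially the same route as the paper: substitute $A(d)=k$ into the two ceiling formulae of Theorems \ref{THM:adformula} and \ref{THM:adformula2} on their respective ranges of $g(d)$ and solve for $g(d)$, with Corollary \ref{COR:ad4andeven} explaining why odd $k$ contributes only the single interval. Your write-up is in fact somewhat more careful than the paper's own proof about the endpoint exclusions and about verifying that the two intervals in the even case sit in the correct ranges and are disjoint.
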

\begin{proof}
    For this, we consider the range of $g(d)$ and use the corresponding $A(d)$ formulae given in Theorems \ref{THM:adformula} and \ref{THM:adformula2} respectively. In particular, we first substitute $A(d)=k$ into each formula and remove the ceiling to find the lower limit of $g(d)$. Then for the upper limit, we repeat the process but instead substitute $A(d)=k+1$. Since $A(d)$ is always even for $g(d)>\tau^{-2}$ (Corollary \ref{COR:ad4andeven}), only one formula for $A(d)$ applies in the case that $A(d)=k$ is odd, giving rise to a single interval coming from Theorem \ref{THM:adformula}.

    In the case that $A(d)$ is even, there are corresponding values of $g(d)$ in both of the intervals $[0,\tau^{-2})$ and $[\tau^{-2},0.5]$. Whence, we obtain a union of two disjoint $g(d)$ ranges when $A(d)=k$ is even, corresponding to the two different formulae for $A(d)$ coming from both Theorem \ref{THM:adformula} and Theorem \ref{THM:adformula2}.
\end{proof}
\begin{rem}Notice that since the ranges of $g(d)$ except for the first case of $A(d)=2$ are always open. This means that for $k\geq2$, the isolated points $\tau^{-1}/k$ and $\frac{k-2\tau}{2k-6}$ on the range of $g(d)$ are avoided.
\end{rem}
An immediate consequence of this result is that $A(d)$ takes all possible values and indeed infinitely often because the sequence $g(d)$ is dense in $[0,0.5]$.
\begin{prop}
For all $k \geq 2$, there exist infinitely many $d \geq 1$ such that $A(d)=k$.\qed
\end{prop}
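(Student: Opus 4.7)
The plan is to combine Proposition \ref{PROP:gdranges}, which identifies the level sets $\{d : A(d) = k\}$ with preimages under $g$ of explicit non-empty open intervals in $[0, 0.5]$, together with the density of the sequence $(g(d))_{d \geq 1}$ in $[0, 0.5]$. Concretely, for each $k \geq 2$ Proposition \ref{PROP:gdranges} provides a non-empty open set $U_k \subseteq [0,0.5]$ (a single interval if $k$ is odd or $k=2$, a union of two intervals if $k \geq 4$ is even) such that $A(d) = k$ iff $g(d) \in U_k$. So it suffices to show that $g^{-1}(U_k)$ is infinite.

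To do this, I would first recall that $g(d) = \|d\tau\|$, the distance from $d\tau$ to the nearest integer, and that by Weyl's equidistribution theorem the sequence $(\{d\tau\})_{d\geq 1}$ is equidistributed, hence dense, in $[0,1)$ since $\tau$ is irrational. The map $x \mapsto \min(x, 1-x)$ sends $[0,1)$ continuously onto $[0, 0.5]$, and pushes any dense sequence in $[0,1)$ to a dense sequence in $[0, 0.5]$. Thus $(g(d))_{d \geq 1}$ is dense in $[0, 0.5]$.

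Since each $U_k$ is a non-empty open subset of $[0, 0.5]$, density implies there exist infinitely many $d$ with $g(d) \in U_k$. (Indeed, if only finitely many $d$ satisfied $g(d) \in U_k$, one could remove a small open neighbourhood around each such $g(d)$ from $U_k$ and obtain a non-empty open subset of $[0,0.5]$ disjoint from the tail of the sequence, contradicting density.) Applying Proposition \ref{PROP:gdranges} again, each such $d$ satisfies $A(d) = k$, giving infinitely many $d$ with $A(d) = k$.

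There is no real obstacle here: once Proposition \ref{PROP:gdranges} is in hand, the argument is a two-line density argument built on the irrationality of $\tau$. The only mild subtlety is to check that the level set intervals in Proposition \ref{PROP:gdranges} are genuinely non-empty for every $k \geq 2$, which is immediate from the endpoint formulas since $\tau^{-1}/k < \tau^{-1}/(k-1)$ for all $k \geq 2$.
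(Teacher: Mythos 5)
Your proof matches the paper's: the authors likewise deduce the result immediately from Proposition \ref{PROP:gdranges} together with the density of $(g(d))_{d\geq 1}$ in $[0,0.5]$, which follows from Weyl's theorem applied to the irrational $\tau$. Your additional care about the non-emptiness of the level sets and the separate treatment of $k=2$ (where the relevant range $(\tau^{-1}/2,\tau^{-2}]$ is given just before Proposition \ref{PROP:gdranges}) is correct but does not change the argument.
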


We can further show the density of values of $d$ for which $A(d)=k$ for a fixed $k\geq 1$.
Let
\[
\mathrm{dens}_{\,\bff}(k)=\lim_{n\to\infty}\frac{\#\left\{d\in\{1,\dots ,n\} \mid A(d)=k\right\}}{n}.
\]
In order to calculate the density, we first need to show that not only is the orbit of $g(d)$ dense in $[0,0.5]$, but it is also uniformly distributed.
\begin{prop}\label{LEM:gdisud}
The values of $g(d)$ are uniformly distributed in the interval $[0,0.5]$.
\end{prop}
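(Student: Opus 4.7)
The plan is to reduce the claim to Weyl's equidistribution theorem via the folding map $\pi\colon[0,1]\to[0,0.5]$ defined by $\pi(x)=\min(x,1-x)$. Since $\tau$ is irrational, Weyl's theorem guarantees that the sequence $(\{d\tau\})_{d\geq 1}$ is uniformly distributed in $[0,1]$. By definition, $g(d)=\pi(\{d\tau\})$, so the distribution of the values $g(d)$ in $[0,0.5]$ is the pushforward of the uniform distribution on $[0,1]$ under $\pi$.

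Concretely, I would fix an arbitrary subinterval $[a,b]\subseteq[0,0.5]$ and observe that
\[
\pi^{-1}([a,b]) = [a,b]\cup[1-b,1-a]\subseteq[0,1],
\]
a disjoint union of Lebesgue measure $2(b-a)$ (the two pieces are disjoint as long as $b\leq 0.5$; if $b=0.5$, they meet only at the single point $0.5$, which is negligible). Then Weyl's theorem gives
\[
\lim_{n\to\infty}\frac{\#\{1\leq d\leq n \mid g(d)\in[a,b]\}}{n}
 = \lim_{n\to\infty}\frac{\#\{1\leq d\leq n \mid \{d\tau\}\in\pi^{-1}([a,b])\}}{n}
 = 2(b-a).
\]
Since $2(b-a) = (b-a)/0.5$ is exactly the Lebesgue measure of $[a,b]$ normalised by the length of $[0,0.5]$, this is precisely the statement that $(g(d))_{d\geq 1}$ is uniformly distributed in $[0,0.5]$.

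There is no real obstacle here: the only thing to be mildly careful about is the boundary case $b=0.5$ (where the two preimage intervals overlap at a single point) and $a=0$ (where they meet at $0$ and $1$ on the circle), but these are sets of measure zero and do not affect the limit. Everything else follows immediately from Weyl's theorem applied to the irrational rotation by $\tau$.
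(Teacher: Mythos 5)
Your argument is correct and is essentially the paper's own proof: both reduce the claim to Weyl's equidistribution theorem for the irrational rotation by $\tau$ and then push the uniform distribution forward through the folding map $x\mapsto\min(x,1-x)$. You simply make the measure-preservation step explicit by computing the preimage $[a,b]\cup[1-b,1-a]$ of a test interval, which is a reasonable amount of added detail but not a different route.
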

\begin{proof}
Recall the definition of step distance, $g(d)=\min\{\{ d\tau\}, 1-\{ d\tau\}\}$. So we may interpret $g$ as a function $g \colon \N \to [0,0.5]$.
We know that the function $r_{\tau} \colon \N \to [0,1]$ given by $r_\tau \colon d \mapsto \{ d\tau\}$ is uniformly distributed in $[0,1]$ by Weyl's theorem, as $\tau$ is irrational. Now let $h\colon [0,1] \to [0,0.5]$ denote the function
\[h(x)=\begin{cases}
x\,&\text{ if }\:0<x<0.5,\\
1-x\,&\text{ if }\:0.5<x<1,
\end{cases}\]
which `folds' the unit interval in half. We have $g = h \circ r_\tau$. As $r_\tau$ is uniformly distributed and $h(x)$ preserves Lebesgue measure, the uniform distribution of their composition follows.

\end{proof}
So now, the density of $A(d)$ can be calculated by dividing the length of its level set (coming from Proposition \ref{PROP:gdranges}) by 0.5, the total length of the codomain of $g$.
For example, the density of the set of values of $d$ for which $A(d)=3$ is
\[\mathrm{dens}_{\,\bff}(3)=2\left(\frac{\tau^{-1}}{2}-\frac{\tau^{-1}}{3}\right)\approx 0.2060.\]
The general result is then as follows.
\begin{theorem}
Let $k\geq3$. If $k$ is odd, then 
\[
\mathrm{dens}_\mathbf{\,\bff} = 2\left(\frac{\tau^{-1}}{k-1} - \frac{\tau^{-1}}{k}\right).
\]
If $k$ is even, then
\[
\mathrm{dens}_\mathbf{\,\bff} = 2\left(\frac{\tau^{-1}}{k-1} - \frac{\tau^{-1}}{k}
+ \frac{k-2\tau^{-1}}{2k-2} - \frac{k-2\tau}{2k-6}\right).
\]
\hfill\qed
\end{theorem}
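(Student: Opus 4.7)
The plan is to combine the two preceding results, Proposition \ref{PROP:gdranges} and Proposition \ref{LEM:gdisud}, essentially as a direct computation. Proposition \ref{PROP:gdranges} describes the level set $\{d \geq 1 \mid A(d) = k\}$ as the preimage under $g$ of an explicit open subset $L_k \subseteq [0, 0.5]$: a single interval when $k$ is odd, and a disjoint union of two intervals when $k$ is even. Proposition \ref{LEM:gdisud} tells us that the sequence $(g(d))_{d \geq 1}$ is uniformly distributed in $[0, 0.5]$. By the standard characterisation of uniform distribution (applied to the indicator function of $L_k$, whose boundary has Lebesgue measure zero), the natural density of $\{d \mid g(d) \in L_k\}$ exists and equals $|L_k|/0.5 = 2|L_k|$.

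Concretely, I would proceed as follows. First, for odd $k \geq 3$, apply Proposition \ref{PROP:gdranges} to obtain
\[
\mathrm{dens}_{\bff}(k) = 2 \cdot \left|\left(\frac{\tau^{-1}}{k}, \frac{\tau^{-1}}{k-1}\right)\right| = 2\left(\frac{\tau^{-1}}{k-1} - \frac{\tau^{-1}}{k}\right),
\]
which is exactly the claimed formula. Second, for even $k \geq 3$, use that the level set is the disjoint union
\[
\left(\frac{\tau^{-1}}{k}, \frac{\tau^{-1}}{k-1}\right) \cup \left(\frac{k - 2\tau}{2k - 6}, \frac{k - 2\tau^{-1}}{2k - 2}\right),
\]
whose total Lebesgue measure is the sum of the two individual lengths (the intervals are disjoint since the first lies in $[0, \tau^{-2})$ and the second in $(\tau^{-2}, 0.5]$). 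Multiplying by $2$ yields the claimed even-$k$ formula.

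The only conceptual step beyond plugging in is the justification that uniform distribution of $(g(d))_{d \geq 1}$ on $[0, 0.5]$ actually gives the natural density of $\{d \mid A(d) = k\}$, not just a limsup-style statement. This follows from Weyl's equidistribution criterion applied to the Riemann-integrable (indeed, piecewise constant) indicator $\mathbf{1}_{L_k}$; since $\partial L_k$ is a finite set and therefore has Lebesgue measure zero, the Cesàro averages $\frac{1}{n}\sum_{d=1}^n \mathbf{1}_{L_k}(g(d))$ converge to $\int_0^{0.5} \mathbf{1}_{L_k}(x) \cdot \frac{dx}{0.5} = 2|L_k|$, which is precisely $\mathrm{dens}_{\bff}(k)$. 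There is no serious obstacle here; the only minor care required is to note that the open versus closed endpoints in Proposition \ref{PROP:gdranges} are irrelevant for the density computation, so the formula does not depend on such boundary conventions.
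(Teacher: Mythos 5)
Your proposal is correct and matches the paper's approach exactly: the paper likewise obtains the density by taking the level sets of $A(d)$ from Proposition \ref{PROP:gdranges}, invoking the uniform distribution of $(g(d))_{d\geq 1}$ from Proposition \ref{LEM:gdisud}, and dividing the total level-set length by $0.5$. Your added remark justifying that equidistribution yields genuine natural density (via Weyl's criterion applied to the indicator of $L_k$, whose boundary is finite) is a welcome bit of extra care that the paper leaves implicit.
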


Using the formulae above, we determine that the most frequent value of $A(d)$ is $4$ with $\mathrm{dens}_{\,\bff}(4)=2\left(\frac{\tau^{-1}}{3}-\frac{\tau^{-1}}{4}+\frac{2-\tau^{-1}}{3}-\tau^{-2}\right)\approx 0.2604$.

\subsection{Alternative proofs of some of the \walnut{}-proven statements}\label{SEC:altproofs}
To conclude, we use the techniques developed in this section to provide alternative proofs for some of the results from Section \ref{SEC:results-walnut} that were proven using \walnut{}.

First, we provide an alternative proof for Proposition \ref{propAd2i02}: for all $d$, if $A(d)=2$, then either $i(d)=0$ or $i(d)=2$. Moreover, we are able to classify exactly when $i(d)=0$ and when $i(d)=2$.
\begin{prop}\label{PROP:ad2i02details}
If $A(d)=2$, and $\{ d\tau\}<0.5$ then $i(d)=0$. If $A(d)=2$ and $\{ d\tau\}>0.5$ then $i(d)=2$.
\end{prop}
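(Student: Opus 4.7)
The plan is to translate the problem into the rotation framework from Section \ref{SEC:rot} and then verify the claim by directly checking the candidate starting positions $p = 0, 1, 2$ in each sign case. By Lemma \ref{LEM:adobs}, a MAP of length $2$ and difference $d$ on symbol $\mathtt{0}$ beginning at position $p$ exists precisely when $\{(p+1)\tau\}$ and $\{(p+1+d)\tau\}$ both lie in $I_0 = (\tau^{-2}, 1)$. By Lemma \ref{LEM:idnever1} we may restrict to MAPs on symbol $\mathtt{0}$, and by Proposition \ref{PROP:gdranges} the hypothesis $A(d) = 2$ forces $g(d) \in (\tau^{-1}/2, \tau^{-2}]$. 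Using Equation \ref{EQ:fibfraclim} I would compute $\{\tau\} = \tau^{-1}$, $\{2\tau\} = \tau^{-3}$, and $\{3\tau\} = 1 - \tau^{-4}$, from which $f_0 = \mathtt{0}$, $f_1 = \mathtt{1}$, $f_2 = \mathtt{0}$; in particular, $p = 1$ is automatically ineligible as a starting position of a MAP on symbol $\mathtt{0}$.

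In the case $\{d\tau\} < 0.5$, the orbit steps to the right by $g(d)$. At $p = 0$ the second orbit point is $\tau^{-1} + g(d)$, and since $g(d) < \tau^{-2}$ strictly (the value $\tau^{-2}$ is attained only at $d = 1$, which lies in the other case because $\{\tau\} = \tau^{-1} > 0.5$), this sum lies in $(\tau^{-1}, 1) \subset I_0$ using the identity $\tau^{-1} + \tau^{-2} = 1$. Hence $p = 0$ already produces a MAP and $i(d) = 0$.

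In the case $\{d\tau\} > 0.5$, the orbit steps to the left by $g(d)$. At $p = 0$ the second point equals $\tau^{-1} - g(d)$, and the inequality $g(d) > \tau^{-1}/2 > \tau^{-3}$ forces this point below $\tau^{-2}$, hence into $I_1$. So $p = 0$ fails. For $p = 2$ the second point is $(1 - \tau^{-4}) - g(d)$, which lies in $I_0$ provided $g(d) \leq \tau^{-2} < 1 - \tau^{-4} - \tau^{-2}$; this last inequality rearranges to $2\tau^{-1} > 1 + \tau^{-4}$ and follows from $\tau^2 = \tau + 1$. Therefore $i(d) = 2$.

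The argument is essentially a sequence of routine interval manipulations, so the main obstacle is careful bookkeeping rather than any conceptual difficulty. The most delicate point is confirming that the boundary value $g(d) = \tau^{-2}$ is reserved for the $\{d\tau\} > 0.5$ case, which requires invoking Remark \ref{REM:I0gdnodiv} to rule out any $d \neq 1$ realising this boundary.
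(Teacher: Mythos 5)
Your argument is correct and follows essentially the same route as the paper: both translate the question into the circle-rotation picture of Lemma \ref{LEM:adobs}, use the range $g(d)\in(\tau^{-1}/2,\tau^{-2}]$ forced by $A(d)=2$, and split on whether the orbit steps left or right. The only cosmetic difference is that you verify the candidate positions $p=0,1,2$ directly (invoking Lemma \ref{LEM:idnever1} to discard $p=1$), whereas the paper first solves a system of inequalities for the full interval of admissible starting points and then identifies the first orbit point landing in it; your explicit treatment of the boundary case $g(d)=\tau^{-2}$ (attained only at $d=1$, which falls in the $\{d\tau\}>0.5$ case) is a detail the paper leaves implicit.
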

\begin{proof}
For all $d$, if $A(d)=2$, then by Proposition \ref{PROP:gdranges}, the possible step distance range is $g(d)\in(\tau^{-1}/2,\tau^{-2}]$.

Now, there are two ways we can have $g(d)$ in the above range. From Figure \ref{FIG:stepdisEQ}, recall if $\{ d\tau\} < 0.5$ and if the points $x_n = \{ nd\tau\}$ and $x_{n+1} = \{ (n+1)d\tau\}$ are both in $I_0$, then $x_{n+1}>x_n$, so we are stepping to the right; otherwise we are stepping to the left.
We should determine the interval of starting positions within $I_0$ of orbits that correspond to longest arithmetic progressions of length exactly $2$. We consider each case in turn.

\textbf{Case 1} ($\{ d\tau\} < 0.5$): In this case, we are stepping to the right. Therefore, such a starting position must satisfy (see Figure \ref{FIG:whenad2i0}) the following conditions: (1) we start in $I_0$, (2) by stepping to the left we enter $I_1$, (3) we are able to step exactly once to the right remaining in $I_0$, (4) a second step places us outside of $I_0$. Let $\hat{x}\in I_0$ be such a starting point. Hence, $\hat{x}$ must satisfy the following four inequalities
\begin{enumerate}
\item[(i)] $\tau^{-2}<\hat{x}<1$
\item[(ii)] $\hat{x}-g(d)<\tau^{-2}$
\item[(iii)] $\hat{x}+g(d)<1$
\item[(iv)] $\hat{x}+2g(d)>1$
\end{enumerate}

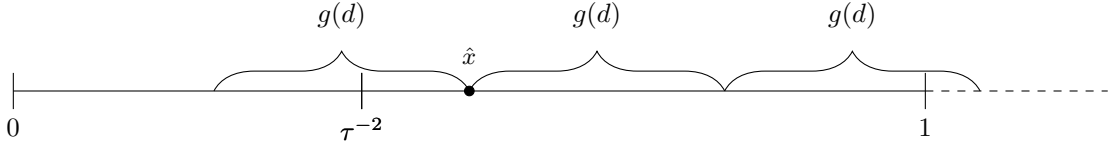
\begin{figure}[ht]
\centering
\begin{tikzpicture}[scale=12]
    \draw (0,0)-- (0.382,0);
    \draw (0.382,0)-- (1,0);
    \draw (1,0)-- (1.2,0) [dashed];
    \foreach \x in {0,1} {
        \draw (\x,0.02) -- (\x,-0.02) node[below] {\x};
        \draw (0.382,0.02) -- (0.382,-0.02) node[below] {$\tau^{-2}$};
    }
    \filldraw[black] (0.5,0) circle (0.15pt);
    \node[above] at (0.5,0.02) {\small $\hat{x}$};
     \draw[decorate,decoration={brace,amplitude=15pt}] (0.5,0) -- (0.78,0) node[midway,above,yshift=20pt] {$g(d)$};
     \draw[decorate,decoration={brace,amplitude=15pt}] (0.78,0) -- (1.06,0) node[midway,above,yshift=20pt] {$g(d)$};
     \draw[decorate,decoration={brace,amplitude=15pt}] (0.5-0.28,0) -- (0.5,0) node[midway,above,yshift=20pt] {$g(d)$};
\end{tikzpicture}
\caption{A potential starting position $\hat{x}$ corresponding to the case $A(d) = 2$ and $\{ d\tau \}<0.5$.}
\label{FIG:whenad2i0}
\end{figure}

Figure \ref{FIG:whenad2i0} illustrates which subinterval of $I_0$ we are considering to find a suitable $\hat{x}$. Solving the system of inequalities, together with the fact that $\tau^{-1}/2 \leq g(d) \leq \tau^{-2}$, we get that $j\in(\tau^{-2},1-\frac{\tau^{-1}}{2})$. The least value of $n$ for which $\{ n\tau \} \in (\tau^{-2},1-\frac{\tau^{-1}}{2})$ is $n=1$. Hence, $i(d)=0$.

\textbf{Case 2} ($\{ d\tau\} > 0.5$): In this case, we are stepping to the left, otherwise it works similarly. We find that a starting position must satisfy the following conditions: (1) we start in $I_0$, (2) by stepping to the right we enter $I_1$, (3) we are able to step exactly once to the left remaining in $I_0$, (4) a second step places us outside of $I_0$. Let $\hat{x}\in I_0$ be such a starting point. Hence, $\hat{x}$ must satisfy the following four inequalities
\begin{enumerate}
\item[(i)] $\tau^{-2}<\hat{x}<1$
\item[(ii)] $1-\hat{x}<g(d)$
\item[(iii)] $\hat{x}-g(d)>\tau^{-2}$
\item[(iv)] $\hat{x}-2g(d)<\tau^{-2}$
\end{enumerate}
Solving the system of inequalities, together with the fact that $\tau^{-1}/2 \leq g(d) \leq \tau^{-2}$, we get that $\hat{x}\in(1-\frac{\tau^{-1}}{2},1)$. The least value of $n$ for which $\{ n\tau \} \in (1-\frac{\tau^{-1}}{2},1)$ is $n=3$. Hence, $i(d)=2$.
\end{proof}
\begin{rem}
Notice that, in Figure \ref{fibad2fig}, the accepting states {\ttfamily 1} and {\ttfamily 7} correspond to the values of $d$ for which $A(d)=2$. \walnut{} can be used to show that the accepting state {\ttfamily 1} corresponds to the values of $d$ where $i(d)=2$, and  the accepting state {\ttfamily 7} corresponds to the values of $d$ where $i(d)=0$. Also, from Proposition \ref{PROP:ad2i02details}, the accepting states {\ttfamily 1} and {\ttfamily 7} equivalently correspond to the values of $d$ for which $\{ d\tau\}>0.5$ and $\{ d\tau\}<0.5$ respectively.
\end{rem}
\begin{prop}[Partial converse of Proposition \ref{propAd2i02}]\label{PROP:id0Ad2}
For all $d$, if $i(d)=0$ then $A(d)=2$.
\end{prop}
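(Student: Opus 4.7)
The plan is to translate the hypothesis $i(d)=0$ into the rotation framework of Section \ref{SEC:rot} and then carry out a case analysis by step direction, closely mirroring the proof of Proposition \ref{PROP:ad2i02details}. By Proposition \ref{fibdefint}, the symbol $f_0=\mathtt{0}$ corresponds to the orbit point $\{\tau\}=\tau^{-1}\in I_0$, and Lemma \ref{LEM:adobs} converts $i(d)=0$ into the statement that the orbit segment $\{(1+kd)\tau\}_{k=0}^{A(d)-1}$ starting at $\tau^{-1}$ remains entirely inside $I_0=(\tau^{-2},1)$. I will then split according to whether $\{d\tau\}<1/2$ (stepping right by $g(d)$) or $\{d\tau\}>1/2$ (stepping left by $g(d)$), compute the maximal length $L_0$ of a MAP starting at position $0$ in each case, and compare $L_0$ with the global maximum $A(d)$ supplied by Theorems \ref{THM:adformula} and \ref{THM:adformula2}.

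In the step-right case, the second orbit point $\tau^{-1}+g(d)$ must itself lie in $I_0$, which forces $g(d)<\tau^{-2}$, so Theorem \ref{THM:adformula} applies and $A(d)=\lceil \tau^{-1}/g(d)\rceil$. The maximum length starting at $0$ is the largest $\ell$ with $\tau^{-1}+(\ell-1)g(d)<1$, giving $L_0=\lceil \tau^{-2}/g(d)\rceil$. The assumption $i(d)=0$ forces $L_0=A(d)$; setting $n:=A(d)$ and unpacking both ceilings yields
\[
g(d)\in \left[\tfrac{\tau^{-1}}{n},\tfrac{\tau^{-2}}{n-1}\right).
\]
This intersection is non-empty iff $\tau^{-1}(n-1)<\tau^{-2}n$, equivalently $n(\tau-1)<\tau$, and since $\tau-1=\tau^{-1}$ this reduces to $n<\tau^2\approx 2.618$. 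Hence $n\leq 2$, and Lemma \ref{adgeq2} then forces $A(d)=2$.

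In the step-left case, the second orbit point $\tau^{-1}-g(d)$ must lie in $I_0$, which forces $g(d)<\tau^{-3}$, again placing us in the range of Theorem \ref{THM:adformula} with $A(d)=\lceil\tau^{-1}/g(d)\rceil$. Now the binding constraint is the left endpoint $\tau^{-2}$ of $I_0$, and using the identity $\tau^{-1}-\tau^{-2}=\tau^{-3}$ the max length starting at $0$ is $L_0=\lceil\tau^{-3}/g(d)\rceil$. The interval $(\tau^{-3}/g(d),\tau^{-1}/g(d)]$ has length $\tau^{-2}/g(d)>\tau^{-2}/\tau^{-3}=\tau>1$, so it must contain an integer, which gives $L_0<A(d)$, contradicting $i(d)=0$. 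Thus the step-left case cannot occur.

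The only substantive point is the ceiling comparison in the step-right case; the computation crucially uses the golden-ratio identities $\tau-1=\tau^{-1}$ and $\tau^{-1}-\tau^{-2}=\tau^{-3}$ to reduce the non-emptiness condition to $n<\tau^{2}$, pinning $A(d)$ to be at most $2$. Everything else is a straightforward unpacking of the rotation picture developed for Theorem \ref{THM:adformula} and Proposition \ref{PROP:ad2i02details}.
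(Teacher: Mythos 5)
Your proof is correct, and it works in the same rotation-sequence framework as the paper's, but the execution is genuinely different. The paper argues by contraposition: assuming $A(d)\geq 3$, it invokes the level-set description of Proposition \ref{PROP:gdranges} to place $g(d)$ in $(0,\tau^{-1}/2]\cup(\tau^{-2},0.5)$, and then disposes of the first range with a qualitative observation (``there is always room for more points to fit inside $I_0$ by shifting the first point'') and of the second by noting that the point following $\{\tau\}$ always lands in $I_1$. You instead split by step direction, compute the exact length $L_0$ of the run starting at the orbit point $\{\tau\}=\tau^{-1}$ --- namely $\lceil\tau^{-2}/g(d)\rceil$ when stepping right and $\lceil\tau^{-3}/g(d)\rceil$ when stepping left, using $1-\tau^{-1}=\tau^{-2}$ and $\tau^{-1}-\tau^{-2}=\tau^{-3}$ --- and compare it with $A(d)=\lceil\tau^{-1}/g(d)\rceil$ from Theorem \ref{THM:adformula}. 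The non-emptiness condition $n<\tau^{2}$ in the step-right case and the gap $\tau^{-2}/g(d)>\tau>1$ in the step-left case pin down $A(d)\leq 2$, and Lemma \ref{adgeq2} finishes. What your route buys is a fully quantitative replacement for the paper's informal ``shifting'' argument, at the cost of having to justify that stepping into $I_0$ at the second point already forces $g(d)<\tau^{-2}$ (so that Theorem \ref{THM:adformula} applies), which you do correctly. One cosmetic point: in the step-left case the phrase ``the interval contains an integer, which gives $L_0<A(d)$'' is slightly weaker than what you need; the clean statement is that the two arguments of the ceilings differ by $\tau^{-2}/g(d)>1$, which directly forces $\lceil\tau^{-1}/g(d)\rceil>\lceil\tau^{-3}/g(d)\rceil$. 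This does not affect correctness.
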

\begin{proof}
Suppose, $A(d)\geq3$. Then Proposition \ref{PROP:gdranges} implies $g(d)\in(0,\tau^{-1}/2]\cup(\tau^{-2},0.5)$. If $i(d)=0$, that is, if one begins at the point $\{ \tau \}$, then we have two cases: firstly, for $g(d)\in(0,\tau^{-1}/2]$, there is always room for more points to fit inside $I_0$ by shifting the first point to either side, which means $i(d)$ cannot be $0$; secondly, for $g(d)\in(\tau^{-2},0.5)$, when one begins at $\{ \tau \}$, the next point will always appear in $I_1$, which implies that $A(d)=1$, which is a contradiction.
\end{proof}

\begin{prop}[Part of Proposition \ref{PROP:dfn+2fib}]\label{PROP:ad3id3}
For $d=F_n+2$ for some $n\geq8$, $A(d)=i(d)=3$.
\end{prop}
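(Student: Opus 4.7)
The plan is to apply the dynamical framework of Section~\ref{SEC:rot}, translating MAP conditions via Proposition~\ref{fibdefint} and Lemma~\ref{LEM:adobs} into explicit statements about the orbit points $\{m\tau\}$ and the intervals $I_0 = (\tau^{-2},1)$, $I_1 = [0,\tau^{-2})$. The first step is to compute $g(F_n + 2)$ exactly. Using Equation~\ref{EQ:fibfraclim} together with the identity $\{2\tau\} = 2\tau - 3 = \tau^{-3}$, a short case split on the parity of $n$ yields
\[
\{(F_n + 2)\tau\} = \tau^{-3} + (-1)^{n+1}\tau^{-n} \quad \text{for all } n \geq 4,
\]
and in particular $g(F_n + 2) = \tau^{-3} + \varepsilon_n$ with $|\varepsilon_n| = \tau^{-n}$, with sign depending on the parity of $n$.

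To establish $A(F_n + 2) = 3$, Proposition~\ref{PROP:gdranges} reduces the claim to the inclusion $g(d) \in (\tau^{-1}/3, \tau^{-1}/2)$. Two of the four endpoint inequalities (one per sign of $\varepsilon_n$) are automatic from $\tau^{-1}/3 < \tau^{-3} < \tau^{-1}/2$; the remaining two reduce to inequalities of the form $\tau^{-n} < c$ with $c \in \Z[\tau]$, the binding one being $\tau^{-n} < \tau^{-3} - \tau^{-1}/3$, which is satisfied precisely when $n \geq 8$.

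For $i(F_n + 2) = 3$, Lemma~\ref{LEM:idnever1} restricts attention to MAPs on $\mathtt{0}$, and I would then check $m = 0, 1, 2, 3$ in turn. The case $m = 1$ is immediate since $\{2\tau\} = \tau^{-3} \in I_1$. For $m = 0$ and $m = 2$, straightforward computation in $\Z[\tau]$ (using $\tau^2 = \tau + 1$) gives
\[
\{(1+2d)\tau\} = 5\tau - 8 + 2\varepsilon_n, \qquad \{(3+d)\tau\} = 5\tau - 8 + \varepsilon_n,
\]
both of which lie in $I_1$ with room to spare, so no length-$3$ MAP on $\mathtt{0}$ begins at these positions. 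For $m = 3$ one obtains the three orbit points $4\tau - 6,\ 6\tau - 9 + \varepsilon_n,\ 8\tau - 12 + 2\varepsilon_n$, all of which lie in $I_0 = (2 - \tau, 1)$. The binding constraint here is the upper bound $8\tau - 12 + 2\varepsilon_n < 1$ in the odd-$n$ case, which once more yields exactly $n \geq 8$.

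The main obstacle is simply the bookkeeping: one must track the sign of $\varepsilon_n$ through several inequalities and verify that the threshold $n = 8$ is precisely what makes the level-set inclusion for $g(d)$ (yielding $A(d) = 3$) and the orbit inclusion at $m = 3$ (yielding $i(d) = 3$) both barely hold. Beyond that, every step reduces to routine arithmetic using $\tau^2 = \tau + 1$.
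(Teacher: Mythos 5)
Your proof is correct, and it lives entirely within the paper's own dynamical framework: compute $g(F_n+2)$ from Equation~\ref{EQ:fibfraclim} and the identity $\{2\tau\}=\tau^{-3}$, then read off $A(d)$ and $i(d)$ from the rotation picture. For the $A(d)=3$ half your route is essentially identical to the paper's (the paper bounds $g(F_n+2)\in[\tau^{-3}-\tau^{-8},\tau^{-3}+\tau^{-8}]$ and applies Theorem~\ref{THM:adformula}; you compute $g$ exactly and invoke the level set from Proposition~\ref{PROP:gdranges}, which is the same theorem repackaged). For the $i(d)=3$ half the two arguments differ in style: the paper characterises the admissible set of starting points $\hat{x}\in I_0$ by a system of four inequalities and then locates the first orbit point $\{n\tau\}$ inside the resulting interval, whereas you evaluate the orbit directly at $m=0,1,2,3$ and check membership in $I_0$ or $I_1$ by arithmetic in $\Z[\tau]$. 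Your computations check out ($5\tau-8\approx 0.090$ lies well inside $I_1$, and $8\tau-12+2\varepsilon_n<1$ reduces to $2\tau^{-n}<\tau^{-6}$, i.e.\ $n\geq 8$, exactly matching the threshold in the statement). Your direct check is arguably the safer presentation here: the paper's stated solution interval $(\tau^{-2},\tau^{-1}+\tau^{-8})$ omits the constraint $\hat{x}+2g(d)<1$ and as written would contain $\{\tau\}=\tau^{-1}$, which your position-by-position verification at $m=0$ explicitly rules out. One small point worth making explicit: when excluding $m=1$ you should note that Lemma~\ref{LEM:idnever1} guarantees $f_{i(d)}=\mathtt{0}$ while $f_1=\mathtt{1}$, so no separate argument about $\mathtt{1}$-progressions is needed there.
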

\begin{proof}
We have from Corollary \ref{COR:ceiltauad} that $\{ 2\tau \} = g(F_3)=\tau^{-3}$ and $g(F_8)=\tau^{-8}$. So for $d=F_n+2$ for $n\geq8$, we have $g(F_n+2)=\{ (F_n+2)\tau\} \in [\tau^{-3}-\tau^{-8},\tau^{-3}+\tau^{-8}]$. This is because $\{ F_{2n}\tau \}\to1$ from below and $\{ F_{2n+1}\tau \}\to0$ from above, both monotonically. 

Since $g(F_n+2)\leq\tau^{-2}$, we use the formula for $A(d)$ from Theorem \ref{THM:adformula}. That is,
\[A(F_n+2) =\left\lceil\frac{\tau^{-1}}{g(F_n+2)}\right\rceil = 3.\]
Since $\{ (F_n+2)\tau\}<0.5$, we always step to the right when counting the MAP (again, see Figure \ref{FIG:stepdisEQ}). Hence we find a candidate starting point $\hat{x}\in I_0$ which must satisfy the following inequalities:
\begin{enumerate}
\item[(i)] $\tau^{-2}<\hat{x}<1$
\item[(ii)] $\hat{x}-g(F_n+2)<\tau^{-2}$
\item[(iii)] $\hat{x}+2g(F_n+2)<1$
\item[(iv)] $\hat{x}+3g(F_n+2)>1$
\end{enumerate}
Solving the system of inequalities, we get that $\hat{x}\in(\tau^{-2},\tau^{-1}+\tau^{-8})$. The least value of $n$ for which $\{ n\tau \} \in (\tau^{-2},\tau^{-1}+\tau^{-8})$ is $n=4$. Hence, $i(d)=3$.
\end{proof}
In principle, one can prove similar statements for other families of $d$ using analogous methods, although the arguments will become ever increasingly complicated. One can therefore further partition the $g(d)$ intervals identified in Proposition \ref{PROP:gdranges} into subintervals on which $i(d)$ is constant. For example, it can be shown that $A(d)=3$ and $i(d)=3$ if and only if $g(d)\in\left(\frac{\tau^{-1}}{3},\frac{1-\{ 4\tau\}}{2}\right)$. The complete general result in this direction requires a careful analysis of the first returns of the orbits $(\{nd\tau\})_{n \geq 1}$ to particular subintervals, the full formulation of which is somewhat more technical and is therefore reserved for the first author's future output.

\section*{Acknowledgements}
The authors would like to thank Ibai Aedo for sharing unpublished notes, Jamie Walton for discussions surrounding the three-gaps theorem, and Jarkko Peltom\"aki for valuable discussions relating to rotation sequences. GJ is funded by the Open University, UK.

\section*{Declarations}
GJ received research support from the Open University, UK.
DR has no financial interests to declare.

\end{document}